\documentclass{amsart}

\usepackage{amsmath}

\usepackage[bookmarks,bookmarksnumbered,%
  citebordercolor={0.8 0.8 0.8},filebordercolor={0.8 0.8 0.8},%
  linkbordercolor={0.8 0.8 0.8},%
  urlbordercolor={0.8 0.8 0.8},%
  ]{hyperref}
\usepackage[margin=1.0in]{geometry}
\usepackage{graphicx}       
\usepackage{amsfonts}       
\usepackage{amsthm}        
\usepackage{url}
\usepackage{amssymb}
\usepackage{latexsym}
\usepackage{bm}
\usepackage{enumerate}
\usepackage{color}
\usepackage{cancel}
\usepackage{xifthen}
\usepackage{readarray}
\usepackage{xparse}

\newtheorem{theorem}{Theorem}[section]
\newtheorem{thm}[theorem]{Theorem}
\newtheorem{prop}[theorem]{Proposition}

\newtheorem{lemma}[theorem]{Lemma}
\newtheorem{definition}[theorem]{Definition}

\newtheorem{corr}[theorem]{Corollary}

\newtheorem{conjecture}[theorem]{Conjecture}
\newtheorem{remark}{Remark}[section]

\newtheorem{question}{Question}[section]

\usepackage{thmtools}

\newcommand{\proplab}[1]{\expandafter\csname label{prp:#1}\endcsname}
\newcommand{\lmref}[1]{\expandafter\csname ref{lm:#1}\endcsname}
\newcommand{\prpref}[1]{\expandafter\csname ref{prp:#1}\endcsname}

\newcommand{\paren}[1]{\left(#1\right)}

\newcommand{\eqn}[1]{\begin{align*}&#1\end{align*}}
\newcommand{\eq}[1]{\begin{align}#1\end{align}}

\DeclareMathOperator{\R}{\mathbb{R}}
\DeclareMathOperator{\C}{\mathbb{C}}
\DeclareMathOperator{\T}{\mathbb{T}}
\DeclareMathOperator{\Q}{\mathbb{Q}}
\DeclareMathOperator{\N}{\mathbb{N}}
\DeclareMathOperator{\Z}{\mathbb{Z}}
\DeclareMathOperator{\F}{\mathbb{F}}

\DeclareMathOperator{\eps}{\epsilon}

\DeclareMathOperator{\supp}{\text{supp}}
\DeclareMathOperator{\ra}{\rightarrow}
\DeclareMathOperator{\half}{\frac{1}{2}}

\DeclareMathOperator{\sgn}{sgn}

\newcommand{\D}[2][]{\ifthenelse{\equal{#2}{}}{D(#1)}{D(#2)}}
\newcommand{\s}[2][]{\ifthenelse{\equal{#2}{}}{s(#1)}{s(#2)}}

\renewcommand{\O}[1]{O\left(#1\right)}

\newcommand{\comment}[1]{}
\newcommand{\ignore}[1]{}
\newcommand{\ignoree}[1]{\hspace{-3pt}}

\renewcommand{\S}[1][]{\ifthenelse{\isempty{#1}}{\ensuremath{\mathbb{E}}}{\ensuremath{\frac{1}{|#1|}\sum_{#1}}}}
\newcommand{\Ss}[1][]{\ifthenelse{\isempty{#1}}{\ensuremath{\mathbb{E}}}{\ensuremath{\frac{1}{|#1|}\sum_{[#1]}}}}

 {\begin{list}{}%
     {\setlength{\leftmargin}{#1}}%
     \item[]%
 }
 {\end{list}}

\newcommand{\tells}[1]{}

\newcommand{\Min}[2][]{\min_{#1}\left(#2\right)}

\newcommand{\baren}[1]{\left[#1\right]}
\DeclareMathOperator{\sign}{sign}

\let\oldexp\exp
  \renewcommand{\exp}[1]{\oldexp\left(#1\right)}

  \let\oldref\ref
  \renewcommand{\ref}[1]{(\oldref{#1})}
  
\let\oldref\ref
\renewcommand{\ref}[1]{(\oldref{#1})}

\newcommand{\norm}[1]{\left\|#1\right\|}

    \renewcommand{\D}[1]{D_w^{(#1)}}

		    \NewDocumentCommand\B{gg}{
						\IfNoValueTF{#2}
							{
							\IfNoValueTF{#1}
							{\ensuremath{\mathbb{B}}}
							{\ensuremath{\mathbb{B}_{#1} }}
							}
							{
							\IfNoValueF{#1}{\ensuremath{\mathbb{B}_{\bm{#1},\bm{#2}}}} 					      
							}					     
		     }
	  \renewcommand{\norm}[1]{\left\|#1\right\|}

	\newcommand{\abs}[1]{\left|#1\right|}
	\newcommand{\set}[1]{\left\{#1\right\}}

    \newcommand{\twincolumn}[2]{
				
				\let\Lheighttttttt\relax
				\let\LHeighttttttt\relax
				\let\Rheighttttttt\relax
				\let\RHeighttttttt\relax
				\let\TotalHeighttttttt\relax
			      \newlength{\Lheighttttttt}
			      \settoheight{\Lheighttttttt}{\vbox{
							   #1
							  }
						   }
			      \newlength{\LHeighttttttt}
			      \setlength{\LHeighttttttt}{\Lheighttttttt}
			      \newlength{\Rheighttttttt}
			      \settoheight{\Rheighttttttt}{\vbox{
							   #2
							  }
						   }
			      \newlength{\RHeighttttttt}
			      \setlength{\RHeighttttttt}{\Rheighttttttt}

			      \newlength{\TotalHeighttttttt}
			      \ifdim \LHeighttttttt < \RHeighttttttt \setlength{\TotalHeighttttttt}{\RHeighttttttt} \else \setlength{\TotalHeighttttttt}{\RHeighttttttt} \fi

				\fboxsep=0pt
			    \noindent\fbox{

			    \parbox[t][\TotalHeighttttttt]{0.48\linewidth}{ #1 }
			    
			    }%
			    \hfill%
			    \fbox{

			    \parbox[t][\TotalHeighttttttt]{0.48\linewidth}{ #2 }
			    }

			    }

    \let\oldtext\text
    \renewcommand{\text}[1]{\ \oldtext{#1}\ }

   \renewcommand{\d}[1]{2}
    
    \renewcommand{\a}[1][]{2}

\newcommand{\GL}{\text{GL}}
\newcommand{\m}{M_1}
\newcommand{\n}{M_2}
\newcommand{\adet}[1]{\abs{#1}}
\newcommand{\tran}{\intercal}

    \subjclass[2010]{Primary 42Bxx; Secondary 49Q15, 28A80,42A99, 11B25}

\title{On the number of 3APs in fractal sets}
\author{Marc Carnovale, Steven Senger}
\date{\today}

\begin{document}

\keywords{Falconer distance conjecture, three-term arithmetic progress, 3AP, fractal, regularity lemma, fourier analysis, harmonic analysis, additive combinatorics, geometric measure theory, croot-lev-pach, varnavides}
\maketitle


\tableofcontents

\begin{abstract} We use techniques from the study of the Falconer distance conjecture to explore conditions which guarantee largeness (in terms of bounded $L^2$ density/Lebesgue measure and Hausdorff measure) of the set of lengths of step-sizes of three-term arithmetic progressions which occur within fractal sets, as well as analogous statements in discrete settings. Our main result is a version of {\L}aba and Pramanik's result in \cite{Laba} that relies only on an assumption of a lower bound, $\delta$, on the mass of the measure $\mu$ together with an upper bound, $M$ on the $L^q$ norm of its Fourier transform for some $q\in(2,3]$ depending on the parameters $\delta$ and $M$.
\end{abstract}


\section{Introduction}\label{s:intro}

A classical theorem due to Roth tells us that if we have a sufficiently dense subset of the first $N$ natural numbers, then there must be three-term arithmetic progressions (3APs) somewhere in the subset. This result has been generalized in many different ways, such as the celebrated Szemer\'edi Theorem on the presence of longer arithmetic progressions in dense subsets of natural numbers, and the Green-Tao Theorem on arithmetic progressions in the primes. Here, we give conditions ``interpolating'' between the dense and sparse-pseudorandom regimes for guaranteeing the existence of non-trivial 3APs in  subsets of $\Z$ and give criteria for finding a maximal number lengths of step-sizes of 3APs in sparse subsets of $\mathbb R^d$.

The main idea here is to apply techniques developed in the study of the Falconer Distance Problem (from \cite{falconer}) to questions of $3$AP counts in finite and fractal settings. We obtain largeness results for the set of lengths $|u|$ of step-sizes $u$ of $3$APs $\paren{x,x+u,x+2u}$ occurring within certain sets, and obtain a generalization (Theorem \ref{thm:L2FalconerResult}) of  {\L}aba and Pramanik's result in \cite{Laba} with an assumption on the $L^q$ norm of $\widehat{\mu}$ in place of a Fourier decay assumption. Our approach relies heavily on techniques and tools from the discrete world (primarily Bohr sets and the Arithmetic Regularity Lemma of Green and Tao, \cite{taogreenregularity}).

In \cite{Laba}, Laba and Pramanik proved the following theorem.

\begin{theorem}[Laba and Pramanik]\label{thm:laba-pramanik}
Assume that $E\subset[0,1]$ is a closed set which supports a probabilistic measure $\mu$
with the following properties:

\medskip

(A) $\mu([x,x+\epsilon])\leq C_1\epsilon^\alpha$ for all $0<\epsilon\leq 1$,

\medskip


(B) $|\widehat{\mu}(k)| \leq C_2 (1 - \alpha)^{-B} |k|^{-\frac{\beta}{2}}$ for all $k \ne 0$,  

\medskip\noindent
where $0<\alpha<1$ and $2/3<\beta\leq 1$.  If $\alpha>1-\epsilon_0$, where
$\epsilon_0>0$ is a sufficiently small constant depending only
on $C_1,C_2,B,\beta$, then $E$ contains a non-trivial 3-term arithmetic progression.

\end{theorem}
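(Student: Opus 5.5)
The plan is the Łaba--Pramanik Fourier-analytic counting argument. Introduce the trilinear form
\[
\Lambda(f,g,h)=\iint f(x)\,g(x+u)\,h(x+2u)\,dx\,du ,
\]
which on $\T$ has the Fourier expansion $\sum_{k}\widehat f(k)\widehat g(-2k)\widehat h(k)$, up to the usual conjugation conventions. We cannot evaluate $\Lambda(\mu,\mu,\mu)$ directly, since $\mu$ is singular. So we decompose $\mu=\mu_1+\mu_2$, where $\mu_1=\mu*\phi_N$ is a smoothing at scale $1/N$ and $\mu_2=\mu-\mu_1$ carries the high frequencies. The goal is to exhibit a positive measure $\nu$ on $\{(x,u)\}$, supported on pairs with $x,x+u,x+2u\in E$, which assigns positive mass away from the diagonal $u=0$. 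Closedness of $E$ then yields a genuine nontrivial progression by a limiting argument.

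The steps are as follows.

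\begin{enumerate}[(i)]
\item \textbf{Main term.} Bound $\Lambda(\mu_1,\mu_1,\mu_1)$ from below. Here $\mu_1$ is a bounded function: by (A), $\|\mu_1\|_\infty\lesssim C_1N^{1-\alpha}$. Also $\int\mu_1=1$. When $\alpha$ is close to $1$ this is a dense function in the Varnavides/Roth sense, with density close to $1$ relative to its $L^\infty$ bound, on a set of measure at least about $N^{\alpha-1}$. A quantitative Roth/Bourgain-type lower bound for functions $0\le f\le K$ of mean $1$ gives $\Lambda(\mu_1,\mu_1,\mu_1)\ge c(K)>0$. Łaba--Pramanik instead use a direct continuous Varnavides-type estimate, which gives $c_0>0$ uniformly once $\epsilon_0$ is small.
\item \textbf{Error terms.} Each term containing at least one $\mu_2$ is controlled in Fourier space by Hölder. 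For example, a typical term is bounded by
\[
\sum_{|k|\gtrsim N}|\widehat\mu(k)|^2\,|\widehat\mu(2k)|\le C_2(1-\alpha)^{-B}N^{-\beta/2}\sum_{|k|\ge N}|\widehat\mu(k)|^2 .
\]
Condition (A) gives $\sum_{|k|\le R}|\widehat\mu(k)|^2\lesssim R^{1-\alpha}$, so dyadic summation yields an error of order $(1-\alpha)^{-B}N^{1-\alpha-\beta/2}$, up to constants. This tends to $0$ as $N\to\infty$ because $\beta/2>1-\alpha$.
\item \textbf{Excluding trivial progressions.} The contribution of $|u|<\eta$ must be removed. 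Using (A), the mass of degenerate configurations near $u=0$ is at most $\lesssim\eta^{\alpha}$ times the sup-norm factor. The hypothesis $\beta>2/3$ is what lets the main term dominate this loss together with the error terms. Choosing $\eta$, then $N$, then $\epsilon_0$ appropriately makes the nontrivial part positive.
\end{enumerate}

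The main obstacle is step (i) combined with step (iii): obtaining a lower bound on the main term that is uniform in $N$ while the sup norm of $\mu_1$ grows like $N^{1-\alpha}$. The Roth-type constant $c(K)$ decays badly in $K$. This is exactly why $\alpha$ must be taken close to $1$, so that the growth $N^{1-\alpha}$ is beaten by the decay $N^{-\beta/2+(1-\alpha)}$ of the error. It is also why the factor $(1-\alpha)^{-B}$ in (B) must be tracked. First, I would try to apply a Varnavides-type bound with $K=N^{1-\alpha}$, picking $N$ as a power of $(1-\alpha)^{-1}$. I would then check that all three competing quantities can be balanced when $\beta>2/3$.
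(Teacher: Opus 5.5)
The paper gives no proof of this statement. It is quoted from {\L}aba--Pramanik, and the introduction only summarizes their method: H\"older with $\hat\ell^3$ bounds on $\Lambda_3$, a limiting measure $\nu$ on pairs of endpoints, and a proof that $\nu\neq 0$ while $\nu$ gives no mass to trivial progressions. Your steps (i)--(ii) match that summary and are essentially the paper's own Mass Lemma: Varnavides applied to $\phi_N\ast\mu$ with $\|\phi_N\ast\mu\|_\infty\lesssim N^{1-\alpha}$, plus a dyadic H\"older bound on the high frequencies. Taking $N=(1-\alpha)^{-A}$ with $A$ large does keep $N^{1-\alpha}=O(1)$ while the error is $o(1)$. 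One small repair: as displayed, your bound in (ii) has an infinite right-hand side, since $\sum_{|k|\ge N}|\widehat\mu(k)|^2$ diverges in general. The supremum must be taken shell by shell, as your next sentence indicates.

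\textbf{The gap is step (iii).} You must bound the near-diagonal mass of the \emph{limiting} measure, $\nu(\{|u|<\eta\})\le\int\psi(u/\eta)\,d\nu$. Your estimate ``$\eta^\alpha$ times the sup-norm factor'' only controls the smoothed piece $\iint\psi(u/\eta)\mu_1(x)\mu_1(x+u)\mu_1(x+2u)\,dx\,du$. The measure $\mu$ itself has no sup norm, and the mollifications have sup norms $\sim n^{1-\alpha}\to\infty$ in the limit that defines $\nu$.

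The bounds of (ii) also do not cover the terms containing $\mu_2$ once the cutoff is inserted. With $g_\eta(u)=\psi(u/\eta)$,
\[
\iint g_\eta(u)f_0(x)f_1(x+u)f_2(x+2u)\,dx\,du=\sum_{j}\widehat{g_\eta}(-j)\sum_{k}\widehat{f_0}(k-j)\,\widehat{f_1}(j-2k)\,\widehat{f_2}(k).
\]
For $j\neq 0$, a factor at frequency $|j-2k|\gtrsim N$ no longer forces the other two factors to frequencies $\gtrsim N$. So your sup-times-$\ell^2$ argument, which relied on the diagonal structure $(k,-2k,k)$, breaks.

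The missing ingredient has two parts:
\begin{itemize}
\item the shift-uniform H\"older bound $\bigl|\sum_k\widehat{f_0}(k-j)\widehat{f_1}(j-2k)\widehat{f_2}(k)\bigr|\le\prod_i\|\widehat{f_i}\|_{\ell^3}$, valid for every $j$;
\item the summability $\sum_j|\widehat{g_\eta}(j)|=O(1)$, uniformly in $\eta\le 1$.
\end{itemize}
Together these bound each near-diagonal error term by $O(\|\widehat\mu\|_{\ell^3}^2\|\widehat{\mu_2}\|_{\ell^3})$. This is exactly where $\widehat\mu\in\ell^3$ is needed. Condition (B) alone gives it precisely when $\beta>2/3$; combined dyadically with (A), $\beta>2(1-\alpha)$ suffices. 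The same shifted estimate is also what makes $\nu$ exist as a weak limit in the first place (compare the paper's Existence Lemma).

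Your text never says where $\beta>2/3$ enters. Step (ii) as written needs only $\beta>2(1-\alpha)$, and step (iii) as written uses no Fourier decay at all.

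With the estimate in hand, the cleanest finish is qualitative, as in the {\L}aba--Pramanik summary. Split $\mu=\mu\ast F_R+(\mu-\mu\ast F_R)$ with a Fej\'er kernel $F_R$. This gives $\nu(\{|u|<\eta\})\lesssim R^{2(1-\alpha)}\eta+o_{R\to\infty}(1)$, hence $\nu(\{u=0\})=0$. Together with $\|\nu\|>0$, this produces a point in the support of $\nu$ with $u\neq 0$, and closedness of $E$ then gives the nontrivial progression.
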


Their approach was powered by bounds on the operator
\[\Lambda_3(f,f,f):=\int\int f(x)f(x-r)f(x-2r)\,dx\,dr,\]
which they obtained by combining the Fourier decay estimate (B) with the Fourier representation
\[\Lambda_3(f,f,f) = \int \abs{\hat{f}(\xi)}^2\hat{f}(2\xi),\]
and controlling this via H\"older by $\hat{L}^3$ bounds, which play a privileged role in the problem. By doing so, they are able to construct a measure, $\nu$, on $[0,1]^2$ which is supported on those $(x,y)$ for which $x,y,(x+y)/2$ all lie in $\supp\mu$ (i.e., $x$ and $y$ are the endpoints of a $3$AP within $\supp\mu$), and to show that this measure gives zero mass to the trivial $3$APs where $x=y$ and itself is not the zero measure. In this paper, we take a different but equivalent approach, parametrizing by $(x,r)$ where $r$ is the common difference.

\newcommand{\si}[1]{\sigma\paren{#1}}
\newcommand{\di}[1]{\delta^{[2]}\paren{#1}}
We consider the ``difference'' measure introduced by Mattila in \cite{MattilaFalconer}, $\delta^{[1]}(\mu)$, which Mattila showed is supported on the set of distances between points $x,y$ in $\supp\mu$. We show how to extend this definition to a measure $\di{\mu}$, supported on the lengths $r=\abs{u}$ of $3$APs $(x,x+u,x+2u)\subset\supp{\mu}^3$, and how to extend the whole apparatus of Mattila's approach to this setting. One important distinction occurs: Mattila's measure $\delta^{[1]}{\mu}$ is a pushforward of $\mu\times\mu$, whereas our measure $\di{\mu}$ is not---so, as in the case of Laba and Pramanik's approach, additional care must be taken in defining the measure, and particularly, additional work is necessary to show that the measure $\di{\mu}$ has non-zero mass. It is in obtaining a lower bound on the mass of $\di{\mu}$ that we must use the Arithmetic Regularity Lemma (for $U^2([N]^d)$) as a black box, and develop technology which allows the ``interpolation'' between the bounds it provides for absolutely continuous measures with $\hat{\ell}^2$ densities and the ``pseudorandom'' bounds available to measures with small enough $\hat{\ell}^3$ norms.

This approach provides certain advantages. Firstly, Theorem \ref{thm:laba-pramanik} says nothing about how many distinct $3$APs lie in $\supp{\mu}$, while it would be desirable to know that there are many such, and that their distribution is ``fair.'' Secondly, the approach of the present article allows us to dispense with the strong requirements (A) and (B) and replace them with significantly weaker and more generic assumptions (by replacing the pointwise Hausdorff dimension (A) by an energy condition, or by replacing both (A) and (B) by an $\hat{\ell}^q$ condition). The main result of this paper is the following $L^2$-type bound under an assumption that $\widehat{\mu}\in\ell^q$ for some $q\in[2,3]$. 
 
 \begin{thm}\label{thm:L2FalconerResult}
 
 Suppose that $\mu$ is a positive measure on $[0,1]^d$ with
  \begin{enumerate}[(a)]\setcounter{enumi}{2}
   \item $\mu([0,1]^d)\geq\delta$
   \item $\norm{\widehat{\mu}}_{L^q}\leq M$
  \end{enumerate}
  for some $\delta>0$, $M<\infty$ and $q\in[2,3]$. Then there is a number\footnote{We can explicitly calculate that
  \[q_0(M,\delta) = 2 + \min_{T>1}\paren{3\paren{1-\frac{1}{T}},\frac{C_1}{\paren{C_2c^{(2)}\paren{\frac{\delta}{M}}}^{T}\ln\paren{C_2c^{(2)}\paren{\frac{\delta}{M}}}^{T}}}\]
  for some constants $C_1$ and $C_2$ and where $c^{(2)}$ is the $L^2$ Varnavides function, given in Definition \ref{def:varnfunction}.} $q_0=q_0(M,\delta)\in(2,3]$ satisfying that $\lim_{M\downarrow{ \sqrt[3]{2}\delta}} q(M,\delta) = 3$, such that if $q\leq q_0$ the set 
   \[\Delta^3(\supp\mu) = \{ r\in\R: x,x+u,x+2u\in\supp\mu, |u|=r\} \]
 has positive $s$-dimensional Hausdorff measure for all $s\leq 1$ satisfying \[s < \half+\frac{d(6-2q)}{2q},\]
 and positive Lebesgue measure when $d\geq 2$.
 
\renewcommand{\si}[1]{\sigma\paren{#1}}
\renewcommand{\di}[1]{\delta\paren{#1}}

\newcommand{\su}[2]{\int #2\,d#1}
\newcommand{\ro}{\rho}
 Further, when $d\geq 2$ the natural measure on this set derived from $\mu$ (see the definition of $\di{\mu}$ below) possesses a weighted $L^2$ density with respect to the one-dimensional Lebesgue measure.
 \end{thm}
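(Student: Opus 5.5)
We will construct a measure $\delta(\mu)$ on $[0,\infty)$ supported on $\Delta^3(\supp\mu)$. Then we show two things: it has positive mass, and it has finite $s$-energy (respectively an $L^2$ density when $d\geq 2$). These give the Hausdorff-measure and Lebesgue-measure conclusions by the standard Frostman/Mattila argument.

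\textbf{Construction.} Mollify: let $\mu_\epsilon=\mu*\phi_\epsilon$. For a test function $g$ on $\R$, define
\[
\langle \delta(\mu_\epsilon),g\rangle=\int\int \mu_\epsilon(x)\mu_\epsilon(x+u)\mu_\epsilon(x+2u)\,g(|u|)\,dx\,du .
\]
Writing this on the Fourier side gives
\[
\int\int \widehat{\mu_\epsilon}(\xi)\,\widehat{\mu_\epsilon}(\eta)\,\overline{\widehat{\mu_\epsilon}}\!\left(\tfrac{\xi+\eta}{2}\right)\,\widehat{g(|\cdot|)}(\xi-\eta)\,d\xi\,d\eta
\]
up to normalization, where $\widehat{g(|\cdot|)}$ is the Fourier transform of the radial function $u\mapsto g(|u|)$ in $\R^d$, controlled via spherical averages $\widehat{\sigma_r}$.

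We will show that $\delta(\mu_\epsilon)$ is bounded in the weighted $L^2$ space (or the $s$-energy space) uniformly in $\epsilon$, so a weak limit $\delta(\mu)$ exists. Each approximating measure is supported in the $O(\epsilon)$-neighbourhood of lengths of 3APs in $\supp\mu$. Since $\supp\mu$ is compact, any limit point of such configurations is a 3AP in $\supp\mu$ (possibly trivial), so the limit is supported on $\Delta^3(\supp\mu)$.

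\textbf{Energy bound.} Following Mattila, we estimate
\[
\int |\widehat{\delta(\mu)}(t)|^2\,|t|^{s-1}\,dt
\]
by expanding the square. This gives a sixfold integral of $\widehat{\mu}$ factors against $\widehat{\sigma}$-type kernels. We then apply the spherical decay $|\widehat{\sigma}(\zeta)|\lesssim|\zeta|^{-(d-1)/2}$ together with H\"older with exponents adapted to $\widehat\mu\in L^q$. Counting homogeneity, the kernel decays as $|\zeta|^{-d(6-2q)/(2q)}$ beyond what $L^q$ integrability absorbs. This yields finiteness exactly when $s<\half+\frac{d(6-2q)}{2q}$. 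When $d\ge2$ this threshold exceeds $1$, which gives the $L^2$ density with weight and hence positive Lebesgue measure. This part is a routine, though index-heavy, adaptation of Mattila's $\delta^{[1]}$ argument.

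\textbf{Positivity of mass, excluding trivial progressions.} This is the main obstacle. We need $\langle\delta(\mu),1\rangle=\Lambda_3(\mu,\mu,\mu)>0$, and also that the contribution from $|u|<\eta$ vanishes as $\eta\to0$. The latter follows from the energy bound, since $\delta(\mu)\in L^1$ is absolutely continuous near $0$ and gives $\{0\}$ no mass.

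For the lower bound we will decompose $\mu_\epsilon=f_{str}+f_{sml}+f_{unf}$ via the Arithmetic Regularity Lemma for $U^2$. This is done after discretizing to $[N]^d$ at scale $\epsilon$, with Bohr-set smoothing, so that the structured part $f_{str}$ is bounded with mass $\geq\delta$. The Varnavides-type $L^2$ bound then gives $\Lambda_3(f_{str})\geq c^{(2)}(\delta/M)$. The cross terms involving $f_{unf}$ are bounded by $\|\widehat{f_{unf}}\|_{\ell^\infty}^{3-q}\|\widehat\mu\|_q^{q}$ through the Fourier formula $\Lambda_3=\int|\hat f|^2\hat f(2\xi)$ and H\"older, interpolating between $\hat\ell^q$ and $\hat\ell^\infty$.

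The key difficulty is making the error small relative to $c^{(2)}(\delta/M)$. The regularity lemma's complexity grows tower-like, roughly $(C_2c^{(2)})^{T}$. So the admissible gap $3-q$ must be chosen smaller than the reciprocal of that complexity times its logarithm, and this is what produces the stated $q_0(M,\delta)$. As $M\downarrow\sqrt[3]{2}\delta$ the measure is forced to be nearly uniform (an $\hat\ell^3$ pseudorandomness extremal case), and $q_0\to3$.

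Finally, we take the limit $\epsilon\to0$ using the uniform bounds, obtaining a nonzero $\delta(\mu)$ with the stated regularity.
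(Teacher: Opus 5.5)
Your overall architecture matches the paper's: build $\nu=\delta(\mu)$ as a limit, locate its support, rule out an atom at $0$, bound its mass from below, and prove regularity. However, the step that carries the theorem, the mass lower bound, has a genuine gap.

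\textbf{The mass lower bound.} You apply the $U^2$ regularity lemma to $\mu_\epsilon$ and then quote $\Lambda_3(f_{str})\geq c^{(2)}(\delta/M)$. The $L^2$ Varnavides bound, and the $L^2$ regularity lemma behind it, need an a priori bound on $\|\mu_\epsilon\|_{L^2}=\|\widehat{\mu_\epsilon}\|_{\ell^2}$. The hypothesis $\|\widehat\mu\|_{\ell^q}\leq M$ with $q>2$ gives no such bound uniformly in $\epsilon$, since $\mu$ may be singular. If you normalize by $\|\mu_\epsilon\|_{L^2}$ instead, the normalized mass tends to $0$. The resulting bound $\|\mu_\epsilon\|_{L^2}^3\,c^{(2)}(\delta/\|\mu_\epsilon\|_{L^2})$ then degenerates as $\epsilon\to0$. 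The $\ell^q$ control of the pieces implicit in your error bound $\|\widehat{f_{unf}}\|_{\ell^\infty}^{3-q}\|\widehat\mu\|_{\ell^q}^q$ is likewise available only relative to that normalization.

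You mention Bohr-set smoothing, but not what it must achieve. The missing ingredient is the paper's interpolation lemma:
\begin{itemize}
\item[(i)] Take the large spectrum $E_\lambda=\{\xi:|\widehat\mu(\xi)|\geq\lambda\}$, which has at most $(M/\lambda)^q$ elements, and the Bohr set $B=B(E_\lambda,\eta)$.
\item[(ii)] Set $g=\mu*|B|^{-1}1_B$ and $h=\mu-g$. Then $g\geq0$ has the mass of $\mu$, and $|\widehat g|,|\widehat h|\lesssim|\widehat\mu|$.
\item[(iii)] H\"older plus the density of $B$ gives $\|\widehat g\|_{\ell^2}\lesssim M\eta^{-O((M/\lambda)^q(q-2))}$, while $\|\widehat h\|_{\ell^3}^3\lesssim\eta^6M^3+\lambda^{3-q}M^q$.
\end{itemize}
The regularity lemma then enters only as a black box inside $c^{(2)}$, applied to $g$. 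The remainder $h$ is absorbed by H\"older in $\ell^3$ on $\Lambda_3(f)=\sum_\xi\widehat f(\xi)^2\widehat f(-2\xi)$, once $\|\widehat h\|_{\ell^3}\ll M\,c^{(2)}(\delta/4M)$.

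This is also where $q_0$ comes from: the Bohr-set loss $\eta^{-O((M/\lambda)^q(q-2))}$ must stay $O(1)$. So it is $q-2$ that must be tiny, not $3-q$. Your constraint on $3-q$ points the wrong way and is incompatible with the hypothesis $q\leq q_0$.

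\textbf{The Lebesgue-measure part.} Your claim that $\frac12+\frac{d(6-2q)}{2q}>1$ for all $d\geq2$ is false. It holds only for $q<6d/(2d+1)$, e.g.\ $q<12/5$ when $d=2$, whereas $q_0$ is supposed to approach $3$. In that range your energy route gives neither positive Lebesgue measure nor the weighted density $r^{-(d-1)/2}\delta(\mu)(r)\in L^2$. (An unweighted $L^2$ density would not control the weight near $r=0$ anyway.)

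The paper gets these from a separate Mattila-type argument. It splits $r^{-(d-1)/2}\delta(\mu)$ into a Hilbert-transform piece $S$ with $\|S\|_{L^2}\approx\|\rho^{(d-1)/2}\sigma(\mu)(\rho)\|_{L^2(d\rho)}$ and a remainder $L$ bounded by a generalized energy. It then feeds in the Brascamp--Lieb energy estimate and the pointwise decay of $\sigma(\mu)$.

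\textbf{The Hausdorff statement.} Your $s$-energy bound is only asserted by counting homogeneity. The paper instead proves the Frostman-type bound $\nu([R,R+\varepsilon])\lesssim\varepsilon^sR^{(d-1)/2}$. It does so by applying Brascamp--Lieb with $1/p+3/q=2$ against Falconer's decay estimate for the Fourier transform of an annulus. That computation is what you would need to supply.
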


We use the following notation to express asymptotic relationships. Given quantities $A$ and $B$ depending on some parameter or set of parameters $t$, we use the notation $A\lesssim B$ denote the existence of an unspecified constant $C$, independent of $t$, for which the inequality $A\leq C B$ holds. We use the notation $A\approx B$ to denote the simultaneous inequalities $A\lesssim B$ and $B\lesssim A$. Unless stated explicitly, we will assume that all of the 3APs considered are nontrivial, which is to say that they will consist of three distinct elements. We obtain results in two different domains which build upon and clarify one another: (1) subsets of the integers, and (2) fractal subsets of $\R^d$.

\subsection{Integer sets}
    
    In the integer setting, we will fix a large finite number $N\in \mathbb N$, and consider subsets of $[N]:= [0,\dots,N-1]$. Intuitively, it seems that if we have a large enough proportion of $[N]$, it would be difficult to avoid 3APs entirely. However, Behrend constructed a set in \cite{behrend} of size $\approx N^{1-1/\sqrt{\log N}}$ that does not contain non-trivial three term arithmetic progressions. There have been several improvements to this construction, with the current best construction of size $\approx N 2^{-\sqrt{8\log N}}$ due to O'Bryant in \cite{obryant}. In the other direction, Bloom and Sisask \cite{bloomsisask} showed that sets of density $\gtrsim (\log N)^{-1+o(1)}$ must contain 3APs; this was dramatically improved by Kelley and Meka \cite{kelleymeka}, who showed that sets avoiding 3APs have density at most $\exp{-c(\log N)^{1/12}}$ for some $c>0$. While these represent the current state of the art on results with hypotheses depending only on the size of the subset, we can say a lot more if the set possesses some quantifiable pseudorandomness. In order to make this precise, we introduce some notation.
    
    The study of arithmetic progressions has been central to additive combinatorics. The density version of the Hales-Jewett theorem, conjectured by Bergelson and Leibman \cite{bergelsonleibman} and proved by Green and Tao \cite{greentaolinearprimes}, provides a far-reaching generalization. Recent work by Fox and Pham has further elucidated the structure of sets with popular differences, a theme resonant with our investigation of the 'largeness' of the difference set.

    Suppose $E\subset [N].$ When the context is clear, we will use $E(x)$ to denote the indicator function of the set $E$, and $\hat E$ to denote the Fourier transform of the indicator function of $E.$ Let $\delta_E$ be the density of $E$, which is $|E|/N.$ One motivation for the results in this manuscript on the integers is the following version of a question of Croot.
\begin{question}[Croot and Lev, Question 7.6 of \cite{crootopenquestions}]
 Given $C,D>0$, is it true that for all sufficiently large $N$ a subset $E$ of $[N]$ satisfying
 \[\delta_E\geq\frac{1}{\log^{C}N}\]
 and
 \[\norm{\widehat{E}}_{\ell^1}\leq \log^{D}N\]
 must contain a non-trivial three-term arithmetic progression?
\end{question}

We cannot answer this difficult question. Instead, we study the easier question of how much pseudorandomness as measured by the $\ell^q$ norm of $\widehat{E}$ is necessary to guarantee $3$APs, as $q$ varies in the interval $[2,3]$. We prove the following as a consequence of a more technical result, Theorem \ref{thm:integer} in Section \ref{s:fourierlessmass}.

\begin{thm}\label{thm:integerCor}
    For any $q\in[2,3]$ and $\delta>0$ there is an $M=M(\delta,q)\in [0,\infty]$ such that for all sufficiently large $N$ a set $E\subset[N]$ of density $\geq\delta$ with $\norm{\widehat{E}}_{\ell^q}<  M$ contains  a non-trivial $3$AP, and $M$ is monotonic decreasing in its second argument with $M(\delta,3) = \sqrt[3]{2}\delta$ and $M(\delta,2)=\infty$.
  \end{thm}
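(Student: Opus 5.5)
We propose to write the 3AP count as $\Lambda_3(E):=\sum_{x,r}E(x)E(x+r)E(x+2r)$, normalized so that $\Lambda_3(E)=N^{-2}\#\{(x,r)\}$ with the Fourier normalization $\widehat{E}(\xi)=N^{-1}\sum_x E(x)e(-x\xi/N)$. Then $\Lambda_3(E)=\sum_\xi \widehat{E}(\xi)^2\widehat{E}(-2\xi)$ (after embedding $[N]$ in $\Z/N'\Z$ with $N'\approx 3N$ prime, costing only constants). Trivial 3APs ($r=0$) contribute $\delta_E N^{-1}$, which is $o(1)$. So it suffices to show that $\Lambda_3(E)\geq c>0$ uniformly in $N$. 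We argue by interpolation between two regimes.

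\emph{Endpoint $q=3$.} We isolate the zero frequency: $\Lambda_3=\delta_E^3+\sum_{\xi\neq0}$. By H\"older, the remainder is at most $\sum_{\xi\ne0}|\widehat E|^3$, and $\sum_{\xi}|\widehat E|^3=\norm{\widehat E}_{\ell^3}^3<M^3$. Thus $\Lambda_3\ge 2\delta^3-M^3$ (using $\widehat E(0)=\delta_E$), which is positive once $M<\sqrt[3]{2}\delta$. This gives $M(\delta,3)=\sqrt[3]{2}\delta$, and the constant should be checked carefully against the chosen normalization.

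\emph{Endpoint $q=2$.} Parseval gives $\norm{\widehat E}_{\ell^2}^2=\delta_E$, so the hypothesis is vacuous. The statement then follows from Roth in Varnavides form: $\Lambda_3(E)\ge c^{(2)}(\delta)>0$. Hence $M(\delta,2)=\infty$.

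\emph{Intermediate $q$.} This is the main obstacle. We would use the Arithmetic Regularity Lemma for $U^2$ to decompose $E=f_{str}+f_{sml}+f_{unf}$. Here $f_{str}$ is a Bohr-set average, which is nonnegative with mean $\delta_E$; $\norm{f_{sml}}_{2}\le\epsilon$; and $\norm{\widehat{f_{unf}}}_\infty\le\eta(F)$, where $F$ is the complexity. Varnavides applied to $f_{str}$ gives $\Lambda_3(f_{str})\ge c^{(2)}(\delta)$. The cross terms involving $f_{unf}$ are controlled by $\norm{\widehat{f_{unf}}}_\infty$ times $\ell^2$ norms. The delicate step is the terms involving $f_{sml}$. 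Here the $\ell^q$ hypothesis enters: $\widehat{f_{sml}}$ is essentially $\widehat E$ restricted off the large spectrum. We would bound $\sum|\widehat{f_{sml}}|^3\le\norm{\widehat{f_{sml}}}_\infty^{3-q}\norm{\widehat E}_{\ell^q}^q$, or interpolate $\ell^3$ between $\ell^2$ (which is at most $\epsilon$) and $\ell^q$ (which is at most $M$) via H\"older, $\norm{\cdot}_3\le\norm{\cdot}_2^{\theta}\norm{\cdot}_q^{1-\theta}$, to get roughly $\epsilon^{\theta}M^{1-\theta}$. We then choose $M(\delta,q)$ as the largest $M$ making this, together with the other error terms, smaller than $c^{(2)}(\delta)/2$.

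Monotonicity in $q$ would follow by defining $M(\delta,q)$ as a supremum over admissible values and noting that $\norm{\cdot}_{\ell^q}$ is decreasing in $q$. The delicate bookkeeping lies in coupling the regularity parameters $\epsilon,\eta$ to $q$ so that the resulting $M$ tends to $\sqrt[3]{2}\delta$ as $q\to3$ and blows up as $q\to2$.

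Finally, positivity of $\Lambda_3$ together with the $o(1)$ trivial contribution yields a non-trivial 3AP for all sufficiently large $N$.
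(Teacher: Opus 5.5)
Everything you need is already in your $q=2$ paragraph. The intermediate-$q$ argument you call the main obstacle is unnecessary, and it does not work as written.

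Your Roth--Varnavides step never uses the Fourier hypothesis. For fixed $\delta>0$ it gives $\Lambda_3(1_E)\ge c(\delta)>0$ for $N\ge N_0(\delta)$, which beats the trivial contribution $\delta_E/N$ for every $q$. Shrinking $M$ only strengthens the hypothesis, and the statement requires no continuity in $q$. So you may set $M(\delta,q)=\infty$ for $q<3$ and $M(\delta,3)=\sqrt[3]{2}\delta$, or take $\sqrt[3]{2}\delta/(q-2)$ if you want strict monotonicity.

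This also repairs your $q=3$ step. The bound $\Lambda_3\ge 2\delta_E^3-\|\widehat E\|_{\ell^3}^3$ is uniform only for thresholds strictly below $\sqrt[3]{2}\delta$. At the threshold itself it yields only $\Lambda_3>0$, which does not beat $\delta_E/N$ with $N_0$ depending on $\delta$ alone. The paper's Lemma \ref{thm:L3count} with $c=1$ has the same degeneracy. Your normalization worry about $\Z/N'\Z$ likewise becomes moot.

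In the regularity sketch, the piece with uniformly small Fourier coefficients is $f_{unf}$; $f_{sml}$ is not a Fourier truncation of $E$. So your bound $\sum|\widehat{f_{sml}}|^3\le\|\widehat{f_{sml}}\|_\infty^{3-q}\|\widehat E\|_{\ell^q}^q$ would need $\|\widehat{f_{sml}}\|_{\ell^q}\lesssim\|\widehat E\|_{\ell^q}$, which the lemma does not give. Moreover, for $q<3$ the exponent $3$ does not lie between $2$ and $q$, so your H\"older interpolation collapses to $\|\widehat{f_{sml}}\|_{\ell^3}\le\|\widehat{f_{sml}}\|_{\ell^2}\le\epsilon$. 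That trivial bound already suffices because all pieces are bounded. A correct execution is therefore just the regularity proof of Roth's theorem and never uses the $\ell^q$ hypothesis.

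At the endpoints you and the paper use the same tools: Lemma \ref{thm:L3count}, and Corollary \ref{thm:L2count}, which for indicators is Roth. In between, the paper goes through Proposition \ref{thm:LpMassLemma}:
\begin{itemize}
\item It normalizes $\mu=1_E/\delta_E$ and, via Lemma \ref{thm:interpolate}, averages $\mu$ over a Bohr set on the large spectrum $\{|\widehat\mu|\ge\lambda\}$, whose size the $\ell^q$ bound controls.
\item This gives $g=\phi_B*\mu\ge0$ with $\|\widehat g\|_{\ell^2}\lesssim M$ for $q$ near $2$.
\item It gives $h=\mu-g$ with $\|\widehat h\|_{\ell^3}$ small because of the off-spectrum tail $\sum_{|\widehat\mu|<\lambda}|\widehat\mu|^3\le\lambda^{3-q}M^q$. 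This is where your $\ell^\infty$--$\ell^q$ bound belongs.
\item The $L^2$ Varnavides bound then handles $g$, and trilinear H\"older in $\ell^3$ handles the $h$ terms.
\end{itemize}
The payoff is a lower bound on $\Lambda_3(1_E/\delta_E)$ depending only on $M$ and $q$. That has content when $\delta_E$ decays with $N$ (the regime $\delta_E\gg N^{-1/2}$ of Theorem \ref{thm:integer}) and for measures. For the fixed-$\delta$ statement here, your Roth argument is simpler and suffices.
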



\subsection{Fractal sets}

\renewcommand{\si}[1]{\sigma\paren{#1}}
\renewcommand{\di}[1]{\delta\paren{#1}}

\newcommand{\su}[2]{\int #2\,d#1}
\newcommand{\ro}{\rho}

At a rough level, an analog of Hausdorff dimension for subsets of integers $E\subset[N]$ is the smallest $\alpha$ for which $|E|\leq N^{\alpha}$. Then rephrasing the discussion at the start of Section \ref{s:intro}, Behrend's result shows that even a Hausdorff dimension of 1 is insufficient to guarantee $3$APs. However,  following Gowers' proof of Szemer\'edi's Theorem in \cite{gowers} shows that good enough Fourier decay implies that there must be 3APs. We state this as Theorem \ref{gowersThm} in Section \ref{s:fourierlessmass}.

The above interpretation of discrete results strongly suggests we ask what happens in the fractal setting, and indeed this has been studied. In 2008 Keleti constructed a full dimensional subset of $[0,1]$ containing no $3$APs (\cite{Keleti}). In 2009, \L aba and Pramanik (\cite{Laba}) proved that a probability measure $\mu$ on $[0,1]$ satisfying two conditions
\begin{equation}\label{itm:1}
\mu(B(x,r))\leq C_H r^{\alpha}
\end{equation}
\begin{equation}\label{itm:2}
\abs{\widehat{\mu}(\xi)}\leq C_F |\xi|^{-\beta/2}
\end{equation}
for $\beta>2/3$ and $\alpha$ sufficiently large depending on $\alpha$ and $\beta$ must contain $3$APs in its support. That is, they showed that some control on the Fourier transform of the measure guarantees the existence of $3$APs.

In 2016, the first listed author showed that under these conditions, $\mu$ must give positive measure to the set of starting points for $3$APs in its support (\cite{M5}). Also in 2016, \L aba, Pramanik, and Henriot (\cite{HenriotLabaPramanik}) showed, amongst other things, the 2009 \L aba-Pramanik result holds when $\beta$ is taken to be any positive number. Following these results, we conjecture the following.

\begin{conjecture}\label{thm:Conj5}
 Suppose that the compactly supported probability measure $\mu$ on $\R^d$ satisfies conditions \eqref{itm:1} and \eqref{itm:2} and that $\alpha>2d/3$. Then under appropriate quantitative assumptions on the parameters involved, the set of $r$ for which there exists an $x$ and a $u$ with $|u|=r$ such that $x,x+u,x+2u$ is a $3$AP contained within $\supp\mu$ has positive Lebesgue measure.
\end{conjecture}

While we do not prove Conjecture \ref{thm:Conj5} here, we obtain some partial results. We remind the reader of the Riesz potential,
$$I_\alpha(\mu):= \int \int |x-y|^{-\alpha}d\mu(x)d\mu(y) \approx \sum_{\xi\in\Z^d}\abs{\widehat{\mu}(\xi)}^2\abs{\xi}_{+}^{-(d-\alpha)}.$$

We need to define the following notation for the set of $r>0$ for which there exists at least one 3AP of difference with length $r$ in a given set,
$$\triangle^3(E) := \{r>0:   x,x+u,x+2u\in E, |u|=r\}.$$

 Note that studying the size of $\triangle^3(\supp\mu)$ as a notion of ``largeness'' for the parameter set 
 \[\set{r:\text{there exists $x$ such that} x,x+r,x+2r\in\supp\mu}\]
 seems natural in light of Falconer's Distance Conjecture, which conjectures that the 
 \[\triangle^2(\supp\mu) := \set{r : \text{there exists $x$ and a $u$ with $|u|=r$ such that} x,x+u\in\supp(\mu)}\] have positive Lebesgue measure whenever the Hausdorff dimension, $\alpha$, of $\mu$ is greater than $d/2$ and $d\geq 2$. A sharpness result of Falconer (\cite{falconer}) shows that dimension $d/2$ is best possible, and Mattila (\cite{MattilaFalconer}) notes that a weaker Fourier decay condition than in \eqref{itm:b} (stated below) guarantees the truth of the conclusion of the Falconer Distance Conjecture for a measure $\mu$ when $\alpha+\beta>d$ (an approach which has been central to progress since, including recent breakthroughs by Guth--Iosevich--Ou--Wang \cite{guthiosevichouwang} and work of Shmerkin \cite{shmerkin2022} and Shmerkin--Wang \cite{shmerkinwang} on self-similar sets). As necessarily the set of step-sizes of $3$APs within $\supp\mu$ is contained within the set of differences between points in $\supp\mu$, optimal size estimates on the former under a given set of hypotheses is strictly harder than for the latter. For related work on avoiding patterns in large sets, see Fraser--Pramanik \cite{fraserpramanik}.
 
 In addition to Theorem \ref{thm:L2FalconerResult}, we obtain the following simpler variant in terms of the $s$-dimensional energy, which may seem more natural to those familiar with the {\L}aba-Pramanik result \cite{Laba}. To state our result, we define relaxations of conditions \eqref{itm:1} and \eqref{itm:2} suited for our purposes:
\begin{enumerate}[(a)]
\item\label{itm:a} $I_{\alpha}(\abs{\mu})<\infty$ 
\item\label{itm:b} $|\widehat{\abs{\mu}}(\xi)|\leq C_F (1+|\xi|)^{-\frac{\beta}{2}} \text{ for all } \xi\in\Z^d$.
\end{enumerate}

 \begin{thm}\label{thm:MAIN}
   Let $\mu$ be a Radon probability measure on $\T^d$ satisfying \eqref{itm:a} and \eqref{itm:b}. Then if 
   \[d-1>(4d-3\beta)/2,\]
  \[ (2\beta + d-\alpha)/\beta < 3,\]
  and $\alpha$ is sufficiently close\footnote{
 Throughout this paper we will often use phrases of the form ``$\alpha$ is sufficiently close to $d$ depending on the parameters [$\beta,C,X,Y$, etc.]'' in situations where satisfyingly explicit bounds are unavailable. In the case at hand, for instance, what can be said is the following. In Theorem \ref{thm:MAIN}, provided that $\alpha\geq \alpha_0,  I_{\alpha}(\mu)\leq I_{\alpha_0},$ and
\[\sup_{\xi\in\Z^d} \frac{\abs{\widehat{\mu}(\xi)}^2}{\abs{\xi}^{\beta}}\leq C_F\]
 for some bounds $0<\alpha_0,I_{\alpha_0}<\infty$, and 
 \[c_0:= c\paren{ \sqrt{I_{\alpha_0}}C_F2^{N(d-\alpha_0)}}\]
 where $c$ is the Varnavides function, it suffices that 
 \[\frac{12C_F I_{\alpha_0}}{\paren{1-2^{-(\beta_0 - 2(d-\alpha_0))/2}}} 2^{-(\beta_0 - (d-\alpha_0))N/2}<c_0\]
 for some $N\in\N$. We will see in the proof that by increasing $\alpha_0$ towards $d$ such an $N$ may always be found provided that $\beta_0>2(d-\alpha_0)$.}
to $d$ depending on the quantities $C_F,\beta$, and $I_{\alpha}(\mu)$, the bilinear distance set $\triangle^3(\supp \mu)$ has positive Lebesgue measure.
 \end{thm}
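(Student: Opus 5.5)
We follow Mattila's approach to the Falconer problem, adapted to 3APs. For a smooth approximate identity $\phi_\epsilon$, set $\mu_\epsilon=\mu*\phi_\epsilon$ and define the approximating measure $\delta_\epsilon(\mu)$ on $(0,\infty)$ by
\[\int g\,d\delta_\epsilon(\mu)=\int\int g(|u|)\,\mu_\epsilon(x)\mu_\epsilon(x+u)\mu_\epsilon(x+2u)\,dx\,du .\]
The plan is to show two things. First, $\delta_\epsilon(\mu)$ has total mass bounded below uniformly in $\epsilon$, and this mass does not concentrate at $u=0$. Second, $\delta_\epsilon(\mu)$ has $L^2$ density (with the weight $r^{d-1}$) bounded uniformly in $\epsilon$. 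A weak-$*$ limit $\delta(\mu)$ is then a nonzero measure supported on $\triangle^3(\supp\mu)\cup\{0\}$ with an $L^2$ density, giving no mass to $\{0\}$. Hence $\triangle^3(\supp\mu)$ has positive Lebesgue measure.

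For the $L^2$ bound we polar-decompose in $u$. Writing $u=r\omega$, the density at $r$ is
\[r^{d-1}\int_{S^{d-1}}\int \mu_\epsilon(x)\mu_\epsilon(x+r\omega)\mu_\epsilon(x+2r\omega)\,dx\,d\omega .\]
Expanding in Fourier series gives
\[\sum_{\xi,\eta}\widehat{\mu_\epsilon}(\xi)\widehat{\mu_\epsilon}(\eta)\overline{\widehat{\mu_\epsilon}(\xi+\eta)}\,\widehat{\sigma}\big(r(\eta+2(\xi+\eta))\big)\]
up to signs. We take the $L^2(dr)$ norm, expand the square and integrate in $r$ using $|\widehat\sigma(\zeta)|\lesssim |\zeta|^{-(d-1)/2}$. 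We then control the resulting multilinear sums as follows. One copy of $\widehat\mu$ is bounded pointwise by (b), with decay $|\cdot|^{-\beta/2}$. The remaining copies are handled in weighted $\ell^2$ through $I_\alpha(\mu)\approx\sum|\widehat\mu(\xi)|^2|\xi|_+^{-(d-\alpha)}$, using Cauchy--Schwarz and Schur's test in the dyadic shells $|\xi|\sim 2^j$. The exponent bookkeeping should close exactly under the two numerical hypotheses:
\begin{itemize}
\item $d-1>(4d-3\beta)/2$ ensures the spherical decay beats the losses from the frequency convolution;
\item $(2\beta+d-\alpha)/\beta<3$ plays the role of the $\hat\ell^3$ threshold of \L aba--Pramanik, making $\sum|\widehat\mu|^3$-type sums converge via interpolation between (a) and (b).
\end{itemize}

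The main obstacle is the lower bound on the mass. Here $\delta_\epsilon(\mu)$ is not a pushforward, so positivity is not automatic. We would split frequencies at scale $2^N$, writing $\mu=\mu_{\le N}+\mu_{>N}$ with $\mu_{\le N}=\mu*K_N$ for a Fej\'er-type kernel. Expanding $\Lambda_3$ trilinearly, the main term $\Lambda_3(\mu_{\le N},\mu_{\le N},\mu_{\le N})$ is bounded below by the Varnavides function $c\big(\sqrt{I_{\alpha_0}}C_F2^{N(d-\alpha_0)}\big)$. This is because $\mu_{\le N}$ has mass $1$ and $L^2$ norm at most $\sqrt{I_\alpha}\,2^{N(d-\alpha)/2}$, and the discretized $L^2$ Varnavides/Arithmetic Regularity Lemma statement applies to such densities. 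Each error term contains at least one $\mu_{>N}$. Via $\Lambda_3=\sum\widehat\mu(\xi)^2\overline{\widehat\mu(2\xi)}$ and H\"older, together with (a) and (b) summed over dyadic shells $|\xi|\sim 2^j$ with $j>N$, each such term is bounded by
\[\frac{12C_FI_{\alpha_0}}{1-2^{-(\beta-2(d-\alpha_0))/2}}\;2^{-(\beta-(d-\alpha_0))N/2}.\]
This requires $\beta>2(d-\alpha)$, which holds once $\alpha$ is close to $d$. Choosing $N$ and then $\alpha$ close to $d$ makes the error smaller than the main term; this is the delicate balance, since the Varnavides lower bound degrades as $N$ grows.

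Finally, to exclude mass at trivial progressions, we show that the contribution of $|u|<\rho$ tends to $0$ as $\rho\to0$, uniformly in $\epsilon$. This follows from the same $L^2$ density bound, since an $L^2$ density has small integral over $[0,\rho)$.
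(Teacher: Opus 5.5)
Your architecture (support via weak-$*$ limits, a uniform lower bound on $\Lambda_3(\mu_\epsilon)$, and a uniform density bound) matches the paper's Support/Mass/Non-singularity scheme. Your mass argument is essentially the paper's Mass Lemma: split at frequency $2^N$, apply Varnavides to the low-frequency part, and bound every term containing a high-frequency factor by $\ell^3$-H\"older on dyadic shells, using $\sum_{|\xi|\sim 2^j}|\widehat{\mu}(\xi)|^3\lesssim C_F I_\alpha(\mu)\,2^{-j(\beta-2(d-\alpha))/2}$.

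Your $L^2$ version of this step is the correct one. Plancherel gives $\|\mu*K_N\|_{L^2}\le\sqrt{I_\alpha(\mu)}\,2^{N(d-\alpha)/2}$. The paper instead asserts this as an $L^\infty$ bound, which its Cauchy--Schwarz computation does not give: summing $|\xi|^{d-\alpha}$ over $|\xi|\le 2^N$ loses a factor $2^{Nd/2}$ that does not disappear as $\alpha\to d$. So the $L^2$ Varnavides function is what makes ``first $N$, then $\alpha$'' work.

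The genuine gap is the non-singularity step. You do not prove the uniform $L^2$ bound, and the mechanism you describe cannot give it. Suppose that after expanding the square you only use $|\widehat{\sigma}(\zeta)|\lesssim|\zeta|^{-(d-1)/2}$. Then the kernel $\int_0^2 r^{2(d-1)}\widehat{\sigma}(r\zeta)\overline{\widehat{\sigma}(r\zeta')}\,dr$ is dominated by the rank-one kernel $(1+|\zeta|)^{-(d-1)/2}(1+|\zeta'|)^{-(d-1)/2}$. With absolute values on the Fourier coefficients, Cauchy--Schwarz and Schur can give nothing better than
\[
\|\delta_\epsilon(\mu)\|_{L^2([0,2])}\lesssim\sum_{\zeta,c\in\mathbb{Z}^d}(1+|\zeta|)^{-\frac{d-1}{2}}\,|\widehat{\mu}(c-\zeta)|\,|\widehat{\mu}(\zeta-2c)|\,|\widehat{\mu}(c)|,
\]
where $\zeta$ is the frequency dual to $u$; this is really an $L^\infty$ bound.
\begin{itemize}
\item A positive proportion of the $\approx R^{2d}$ pairs with $|\zeta|\sim|c|\sim R$ have all three arguments of size $\sim R$. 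So when (b) is essentially sharp on most of each shell, that block contributes $\approx R^{2d-(d-1)/2-3\beta/2}$.
\item This is the same count behind the paper's exponent $(4d-3\beta)/2$, with $(d-1)/2$ in place of $\rho$. It is summable only if $(d-1)/2>(4d-3\beta)/2$, i.e.\ $\beta>d+\tfrac13$. In that regime $\widehat{\mu}\in\ell^2$ and the theorem is elementary.
\item Bringing in $I_\alpha$ does not help, since (b) can be essentially sharp on most shells while $I_\alpha(\mu)<\infty$ for every $\alpha<\beta$ (as for Salem-type measures).
\end{itemize}
So an argument along your lines must exploit cancellation somewhere. One option is the oscillation of $\widehat{\sigma}$ in $r$ (Mattila's $L^2$ identity, i.e.\ the $S+L$ decomposition the paper uses for its $\widehat{\ell}^q$ theorem). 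Another is cancellation inside $\sum_c\widehat{\mu}(c-\zeta)\widehat{\mu}(\zeta-2c)\widehat{\mu}(c)$. Your sketch supplies neither. It also does not explain how an $\alpha$-free condition like $d-1>(4d-3\beta)/2$ would come out of a computation mixing (a) and (b).

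The paper reaches that hypothesis by a different route: it proves $\delta(\mu)\in L^\infty$ by combining two ingredients.
\begin{itemize}
\item The Bessel-kernel proposition just before the Non-singularity Lemma: $\delta(f)(s)\lesssim s^{d-1-\rho}\int_0^\infty r^{d-1-\rho}|\sigma(f)(r)|\,dr$ for all $\rho\in(0,d-1)$.
\item A Brascamp--Lieb bound on $\int r^{d-1-\rho}\varsigma(\mu)(r)\,dr$. It uses only (b), via $\widehat{\mu}\in\ell^q$ for $q>2d/\beta$, and is finite for $\rho>(4d-3\beta)/2$.
\end{itemize}
The window $(4d-3\beta)/2<\rho<d-1$ is nonempty exactly under the hypothesis $d-1>(4d-3\beta)/2$.

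This does not fill your gap, however. In the high-frequency half of that proposition, the last step replaces $(rs)^\rho$ by $1$ on $\{s\ge 1/r\}$, where in fact $(rs)^\rho\ge 1$. What $|J(t)|\lesssim t^{-1/2}$ honestly gives there is $r^{(d-1)/2}\int_{1/r}^\infty s^{(d-1)/2}|\sigma(f)(s)|\,ds$, i.e.\ only $\rho\le(d-1)/2$. That is the same weight your approach produces, and it again forces $\beta>d+\tfrac13$. As a check, the same argument run on Mattila's distance measure would bound the distance density of every measure with finite $(1+\epsilon)$-energy, contradicting Falconer's sharpness examples for $d\ge 3$.

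So what is missing from your proposal, and is not actually supplied by the paper's argument either, is a density estimate for $\delta(\mu)$ that genuinely uses cancellation. Without it, the positive-Lebesgue-measure conclusion is not established.
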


 {\L}aba-Pramanik type results like Theorem \ref{thm:MAIN} follow directly from Theorem \ref{thm:L2FalconerResult}, as the following corollary makes precise. This is one indication of the naturality of the $\ell^q$-norm condition on $\widehat{\mu}$.

 \begin{corr}\label{thm:Fractal3AP}
  Suppose that the positive measure $\mu$ satisfies \eqref{itm:a} and \eqref{itm:b} where $\alpha$ is sufficiently close to $d$ depending on the parameters $I_{\alpha}(\mu)$, $C_F$, and $\beta$. Then $\supp\mu$ contains non-trivial $3$APs, and indeed the set 
   \[\Delta^3(\supp\mu) = \{ r\in\R: x,x+u,x+2u\in\supp\mu, |u|=r\} \]
  has positive $s$-dimensional Hausdorff measure for any $s\leq 1$ satisfying $s< \half + \frac{d(2\beta+4\alpha-4d)}{4(\beta+d-\alpha)}$, and has positive Lebesgue measure if $d\geq 2$.
 \end{corr}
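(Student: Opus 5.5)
The plan is to derive Corollary \oldref{thm:Fractal3AP} from Theorem \oldref{thm:L2FalconerResult}. We interpolate conditions \eqref{itm:a} and \eqref{itm:b} into an $\ell^q$ bound on $\widehat{\mu}$ with $q$ slightly above $2$, and then read off the Hausdorff exponent. Split the sum
\[
\norm{\widehat{\mu}}_{\ell^q}^q=\sum_{\xi}\abs{\widehat{\mu}(\xi)}^{2}\abs{\widehat{\mu}(\xi)}^{q-2}.
\]
Use \eqref{itm:b} on the factor $\abs{\widehat{\mu}(\xi)}^{q-2}\le C_F^{q-2}(1+|\xi|)^{-\beta(q-2)/2}$. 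We want this weight to be dominated by the Riesz weight $\abs{\xi}_+^{-(d-\alpha)}$, which requires
\[
\frac{\beta(q-2)}{2}\ge d-\alpha,\qquad\text{i.e.}\qquad q\ge q_1:=2+\frac{2(d-\alpha)}{\beta}=\frac{2\beta+2d-2\alpha}{\beta}.
\]
With $q=q_1$ this gives
\[
\norm{\widehat{\mu}}_{\ell^{q_1}}^{q_1}\lesssim C_F^{q_1-2}I_\alpha(\mu),
\]
so hypothesis (d) holds with $M\lesssim (C_F^{q_1-2}I_\alpha(\mu))^{1/q_1}$. Hypothesis (c) is immediate: since $I_\alpha(\mu)<\infty$ and $\mu$ is positive and nonzero, we may take $\delta=\mu(\T^d)=\widehat{\mu}(0)$.

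Next we must check $q_1\le q_0(M,\delta)$. As $\alpha\to d$ with $C_F$, $\beta$ and $I_\alpha(\mu)$ controlled (in the uniform sense of the footnote to Theorem \oldref{thm:MAIN}), we have $q_1\to 2$. Meanwhile $M$ and $\delta$ stay in a compact range bounded away from the degenerate regime, so $q_0(M,\delta)>2$ is bounded below uniformly by its explicit formula: the Varnavides function $c^{(2)}(\delta/M)$ is positive for fixed $\delta/M$. Hence for $\alpha$ sufficiently close to $d$ we get $q_1\le q_0$, and Theorem \oldref{thm:L2FalconerResult} applies with $q=q_1$. This gives positive $s$-dimensional Hausdorff measure for all $s\le 1$ with
\[
s<\half+\frac{d(6-2q_1)}{2q_1}.
\]
Substituting $q_1=2(\beta+d-\alpha)/\beta$ gives $6-2q_1=(2\beta-4d+4\alpha)/\beta$ and $2q_1=4(\beta+d-\alpha)/\beta$. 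The exponent therefore becomes $\half+\frac{d(2\beta+4\alpha-4d)}{4(\beta+d-\alpha)}$, exactly as stated. Positive Lebesgue measure for $d\ge 2$ also comes directly from the theorem. Nontriviality of the 3APs follows because $\Delta^3(\supp\mu)$ has positive measure, so it contains some $r>0$.

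The main obstacle is the uniformity step. $q_0$ depends on $M$, which itself depends on $q_1$ and hence on $\alpha$, so we must make sure that $q_0$ does not collapse to $2$ as $\alpha\to d$. I would handle this by fixing $\alpha_0$ and the bound $I_{\alpha_0}$ as in the footnote, so that $M$ is bounded above uniformly for $q\in[2,3]$. By Plancherel we also have $M\ge\norm{\widehat\mu}_{\ell^\infty}\ge\delta$. Monotonicity of $q_0$ in $M$, decreasing as $M$ grows, then gives a uniform lower bound $q_0\ge q_0(M_{\max},\delta)>2$. A minor technical point is the normalization: the corollary assumes a positive measure that need not be a probability measure, and $\abs{\mu}=\mu$ in that case, so nothing extra is needed there.
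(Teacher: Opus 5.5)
Your proposal is correct and follows the paper's argument. The (a)--(b) interpolation is exactly Lemma \ref{thm:decayToLq} with $q_1=2(\beta+d-\alpha)/\beta$. The conclusion then comes from observing that $q_1\downarrow 2$ while $q_0(M)$ stays bounded away from $2$ as $d-\alpha\downarrow 0$, and invoking Theorem \ref{thm:L2FalconerResult}.

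Your cap $M_{\max}$ with monotonicity of $q_0$ in $M$ (keeping $\delta$ explicit rather than setting $\delta=1$), and your explicit substitution of $q_1$ into the Hausdorff exponent, are slightly more careful versions of the paper's limiting remark. One small correction: $M\ge\delta$ follows from $\|\widehat{\mu}\|_{\ell^q}\ge|\widehat{\mu}(0)|$, not from Plancherel.
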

 \begin{proof}
  By Lemma \ref{thm:decayToLq}, we have that  for $q_1=2\paren{\frac{\beta+ d -\alpha}{\beta}}$
  \[\norm{\widehat{\mu}}_{\ell^{q_1}}^{q_1}\leq I_{\alpha}(\mu)C_F^{q_1-2} \sup_{\xi}\abs{\xi}^{-\paren{q_1-2}\beta/2}\abs{\xi}^{d-\alpha}=I_{\alpha}(\mu)C_F^{q_1-2},\]
  or
  \eq{\label{good}\norm{\widehat{\mu}}_{\ell^{q_1}}\leq \paren{\sqrt{I_{\alpha}(\mu)}}^{\frac{\beta}{\beta+d-\alpha}}C_F^{\frac{d-\alpha}{\beta+d-\alpha}}.}
  
  Consider the function $q_0(M):=q(M,1)$ of Theorem \ref{thm:L2FalconerResult}. If we set $M=M(I_{\alpha}(\mu),C_F,\alpha,\beta)$ to be the right-hand-side of \eqref{good}, then we see that as $I_{\alpha}(\mu)$ stays bounded above by a parameter $C_H$, and as $d-\alpha\downarrow 0$, $M\downarrow \sqrt{I_{\alpha}(\mu)}$ and so $q_0(M)$ nears some $q_0\in(2,3)$. Meanwhile, $q_1=2\frac{\beta+d-\alpha}{\beta}\downarrow 2$, so if $d-\alpha$ is small enough, $q_1\leq q_0$ and the conclusion of Theorem \ref{thm:L2FalconerResult} holds for the measure $\mu$.
\end{proof}

\subsection*{Organization of the paper}
The paper is organized as follows. In Section~\ref{s:fourierlessmass}, we develop the integer results, proving Theorem~\ref{thm:integerCor} as a consequence of the more technical Proposition~\ref{thm:LpMassLemma}. The key tools are a Bohr-set decomposition (Lemma~\ref{thm:interpolate}) and an $L^2$ arithmetic regularity lemma adapted from Green--Tao \cite{taogreenregularity}. In Section~\ref{s:fractalresults}, we adapt Mattila's distance set machinery \cite{MattilaFalconer} to the setting of $3$APs, proving Theorems~\ref{thm:MAIN} and~\ref{thm:L2FalconerResult}. The main components are a Support Lemma, a Mass Lemma (which uses the integer results as a black box), and a Non-singularity Lemma that controls the $L^2$ density of the $3$AP length measure.

\newcommand{\Ff}{\mathbb{F}}
\renewcommand{\si}{\sigma_{E}}
\renewcommand{\di}{\nu}


 \section{Integer results}\label{s:fourierlessmass}
 \subsection{Background and tools}
 
 Let $\Z_{N} := \Z/N\Z$
endowed with normalized counting measure. Given a compact abelian group $G$ endowed with normalized Haar measure $m$ and with character group $\widehat{G}$, let $\hat{f}$ refer to the Fourier transform $\hat{f}:\widehat{G}\ra\C$ given by
 \[\hat{f}(\xi) = \int_{G} f(x)\xi(x)\,dm(x).\]
 
 Define also 
 \[\norm{f}_{U^2(G)} := \paren{\abs{\int f(x)\overline{f(x-u)}\,dm(x)}^2\,dm(u)}^{1/4} = \paren{\sum_{\xi\in \widehat{G}} \abs{\widehat{f}(\xi)}^4}^{1/4}.\]
 
\begin{theorem}[Gowers \cite{gowers}]\label{gowersThm}
If $E$ has sufficient Fourier decay that 
\[ \norm{E(x)-\delta_{E}}_{U^2}^4 = \sum_{\xi\neq 0\in\Z_N} \abs{\widehat{E}(\xi)}^4\leq \half \delta_E^3\]
then much smaller densities, $\delta_E$, suffice for the set to contain non-trivial 3APs---such a set $E$ contains non-trivial $3$APs once $\delta_E$ is greater than $N^{-\frac{1}{2}}.$
\end{theorem}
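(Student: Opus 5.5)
The plan is to count 3APs in $\Z_N$ by the Fourier identity and show that the main term beats both the Fourier error and the trivial progressions. I take $N$ odd, so that $2$ is invertible. For the integer interval $[N]$ one instead embeds $E$ into $\Z_{N'}$ with $N'$ a prime in $[2N,4N]$, so that no wraparound progressions are created and the density changes only by a constant factor. Writing $E$ also for its indicator, set
\[\Lambda:=\int\int E(x)E(x+r)E(x+2r)\,dm(x)\,dm(r)=\sum_{\xi\in\Z_N}\widehat{E}(\xi)^2\,\widehat{E}(-2\xi),\]
with signs depending on the character convention. The trivial progressions ($r=0$) contribute exactly $\delta_E/N$ to $\Lambda$. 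It therefore suffices to prove $\Lambda>\delta_E/N$.

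First, isolate the main term. The $\xi=0$ term is $\widehat{E}(0)^3=\delta_E^3$. Equivalently, decompose $E=\delta_E+g$ with $g$ of mean zero. Since $N$ is odd, any two of $x,x+r,x+2r$ are independent and uniform, so every cross term containing one or two copies of $g$ vanishes. This gives $\Lambda=\delta_E^3+\Lambda(g,g,g)$, and the error term is
\[\Lambda(g,g,g)=\sum_{\xi\neq0}\widehat{E}(\xi)^2\widehat{E}(-2\xi).\]
Second, bound this error using the hypothesis. Cauchy--Schwarz pairs $|\widehat{E}(\xi)|^2$ against $|\widehat{E}(-2\xi)|$. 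Since $\xi\mapsto-2\xi$ is a bijection of $\Z_N\setminus\{0\}$, Parseval gives
\[|\Lambda(g,g,g)|\le\Big(\sum_{\xi\neq0}|\widehat{E}(\xi)|^4\Big)^{1/2}\Big(\sum_{\xi\neq 0}|\widehat{E}(\xi)|^2\Big)^{1/2}\le \Big(\sum_{\xi\neq0}|\widehat{E}(\xi)|^4\Big)^{1/2}\,(\delta_E-\delta_E^2)^{1/2}.\]
Third, conclude. If the error is at most $\tfrac12\delta_E^3$, then $\Lambda\ge\tfrac12\delta_E^3$. This exceeds the trivial contribution $\delta_E/N$ exactly when $\delta_E^2>2/N$, that is, when $\delta_E\gtrsim N^{-1/2}$, which is the threshold in the statement. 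The embedding into $\Z_{N'}$ only changes the constant.

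The main obstacle is the second step, namely deducing "error $\le\tfrac12\delta_E^3$" from $\sum_{\xi\ne0}|\widehat{E}(\xi)|^4\le\tfrac12\delta_E^3$. The two sides have different homogeneity: the $U^2$ hypothesis is quartic in $\widehat{E}$ while $\Lambda$ is cubic. Inserted into the display above, the hypothesis yields only
\[|\Lambda(g,g,g)|\le 2^{-1/2}\delta_E^{3/2}\cdot\delta_E^{1/2}=2^{-1/2}\delta_E^{2},\]
which is not smaller than $\delta_E^3$. The sup-norm route, $|\Lambda(g,g,g)|\le\sup_{\xi\neq0}|\widehat{E}(\xi)|\,\delta_E\le\|g\|_{U^2}\delta_E$, fares worse. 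So the step cannot be closed by Hölder alone.

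To close it, I would first attempt an additional \emph{upper} bound on $\sum_{\xi\neq0}|\widehat{E}(\xi)|^2$ that goes beyond Parseval. For instance, one could split the frequencies into the large spectrum $\{|\widehat{E}(\xi)|\ge\eta\delta_E\}$, whose contribution is controlled by the quartic hypothesis via Chebyshev, and the complement, where one tries to use smallness of $|\widehat{E}(-2\xi)|$ directly. Failing that, I would read the hypothesis in the normalization the paper intends, namely as the statement that the nonzero-frequency contribution to $\Lambda$ is at most half the main term. In that reading the argument is the three steps above, and the density threshold $N^{-1/2}$ comes solely from comparing $\tfrac12\delta_E^3$ with the trivial count $\delta_E/N$.
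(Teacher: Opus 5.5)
Your Steps 1 and 3 are correct, and you located the problem accurately: a quartic hypothesis cannot control the cubic error term. However, no better Step 2, whether large-spectrum splitting or anything else, can close this gap, because the statement as printed is false. The paper gives no proof of it, only a citation to Gowers, so there is nothing there to fall back on.

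Let $Q(E)$ denote the number of $(a,b,c,d)\in E^4$ with $a+b=c+d$. Then $\sum_{\xi\neq0}|\widehat{E}(\xi)|^4=Q(E)/N^3-\delta_E^4$. So the hypothesis only says $Q(E)\le |E|^4/N+\tfrac12|E|^3$, which means roughly that $E$ has at most half the maximal additive energy. Here is a counterexample.

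Take $N=3^m$ and $E=\{\sum_{i<m}\epsilon_i3^i:\epsilon_i\in\{0,1\}\}\subset\mathbb{Z}_N$.
\begin{itemize}
\item Every element is at most $(N-1)/2$, so congruences $a+c\equiv 2b$ and $a+b\equiv c+d$ modulo $N$ are equalities in $\mathbb{Z}$.
\item Base-$3$ addition of these digits has no carries. Hence $a+c=2b$ forces each digit triple to satisfy $\epsilon_i(a)+\epsilon_i(c)=2\epsilon_i(b)$, so $a=b=c$, and $E$ has no nontrivial 3AP.
\item By the same digitwise argument, $Q(E)=6^m$, since $\{0,1\}$ has energy $6$.
\end{itemize}
Therefore
\[\sum_{\xi\neq0}|\widehat{E}(\xi)|^4<\frac{6^m}{27^m}=\Bigl(\tfrac34\Bigr)^m\delta_E^3\le\tfrac12\delta_E^3\qquad(m\ge3),\]
while $\delta_E=(2/3)^m=(2/\sqrt3)^mN^{-1/2}$ exceeds any fixed multiple of $N^{-1/2}$. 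For $m=3$ this is $E=\{0,1,3,4,9,10,12,13\}\subset\mathbb{Z}_{27}$.

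So neither of your fallback routes can work. The second one, reading the hypothesis as ``the nonzero-frequency contribution to $\Lambda$ is at most $\tfrac12\delta_E^3$,'' simply assumes the conclusion.

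What is true is the version with the correct homogeneity, and your Steps 1 and 3 prove it verbatim once Step 2 is replaced. Suppose $\sup_{\xi\neq0}|\widehat{E}(\xi)|\le\tfrac12\delta_E^2$. This follows, for instance, from $\|E-\delta_E\|_{U^2}^4\le\tfrac1{16}\delta_E^8$, because the $U^2$ norm dominates the sup of the nonzero Fourier coefficients. Then
\[|\Lambda(g,g,g)|\le\sup_{\xi\neq0}|\widehat{E}(-2\xi)|\sum_{\xi\neq0}|\widehat{E}(\xi)|^2\le\tfrac12\delta_E^2\cdot\delta_E,\]
so $\Lambda\ge\tfrac12\delta_E^3>\delta_E/N$ once $\delta_E>\sqrt{2/N}$. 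The sup-norm route you dismissed is therefore the right one. It fails only because the printed hypothesis has the wrong power of $\delta_E$: a bound of size $\delta_E^3$ on the quartic sum, where something like $\delta_E^8$ is needed.
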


Toward this end, we record some more background material requisite for this and other results in the paper. We begin with the celebrated Varnavides' Theorem, from \cite{varn}.
 \begin{theorem}\label{thm:varnThm}[Varnavides' Theorem.]
 Let $0<\delta<1.$ Then there exists a constant, $N=N(\delta)>0$ such that for any subset $E\subset \{0,1,\dots, N\}^d,$ of size $|E|>\delta N^d$, there are $\gtrsim N^{2d}$ three-term arithmetic progressions in $E$.
 \end{theorem}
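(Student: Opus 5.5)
The plan is to deduce the multidimensional statement from Roth's theorem via the standard Varnavides averaging argument. We will realize $E$ as a dense subset of many ``small'' arithmetic progressions inside $\{0,\dots,N\}^d$.

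First, fix $\delta$ and use Roth's theorem in its density form. It provides an $n_0=n_0(\delta)$ such that every subset of $\{0,\dots,n_0-1\}$ of density at least $\delta/2$ contains a non-trivial $3$AP. Rather than working with one-dimensional lines, it is cleaner to apply the $d$-dimensional version of Roth's theorem directly, via Szemer\'edi's corners or simply via a projection to a line. The simplest route: for $a\in\Z^d$ and $b\in\Z^d\setminus\{0\}$, consider the progression $P_{a,b}=\{a+jb: 0\le j<n_0\}$. We restrict to those $a,b$ with $P_{a,b}\subset\{0,\dots,N\}^d$, for instance $a\in\{0,\dots,N/2\}^d$ and $b\in\{1,\dots,N/(2n_0)\}^d$.

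Second, do the averaging. Each point $x\in\{0,\dots,N\}^d$ that is not too close to the boundary lies in about the same number of such progressions, roughly $n_0\cdot(N/2n_0)^d$ of them. To avoid boundary issues, we can first pass to the central box $\{N/4,\dots,3N/4\}^d$. Alternatively, we can embed everything in $\Z_{p}^d$ for a prime $p\in(2N,4N]$, where translation invariance makes every point lie in exactly the same number of progressions and all $3$APs in $\Z_p^d$ with $b$ small are genuine $3$APs in $\Z^d$. In $\Z_p^d$ we then get $\mathbb{E}_{a,b}|E\cap P_{a,b}|/n_0=|E|/p^d\gtrsim_d\delta$. Hence a proportion $\gtrsim\delta$ of the pairs $(a,b)$ yield $|E\cap P_{a,b}|\ge c_d\delta n_0$. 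Here $n_0$ is chosen by Roth's theorem for density $c_d\delta$.

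Third, count. Each good pair $(a,b)$ contributes at least one non-trivial $3$AP $(a+ib,a+(i+k)b,a+(i+2k)b)$ in $E$. The number of good pairs is $\gtrsim\delta\, p^d (p/n_0)^d\approx_{\delta} N^{2d}$. A fixed $3$AP $(x,x+u,x+2u)$ arises from at most $n_0^2$ pairs $(a,b)$, since the data $(i,k)$ determine $b=u/k$ and $a=x-ib$. So $E$ contains $\gtrsim n_0^{-2}\delta N^{2d}\gtrsim_\delta N^{2d}$ $3$APs. This holds for $N\ge N(\delta)$, with $N(\delta)$ large compared to $n_0$ so that the wraparound is negligible.

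The main obstacle is the bookkeeping that the wraparound in $\Z_p^d$ does not create fake progressions. I would handle it by taking $b$ in the range $|b_i|<p/(2n_0)$, so that every progression found in $\Z_p^d$ with both endpoints in $\{0,\dots,N\}^d$ lifts to a genuine progression in $\Z^d$. This requires $p>2N$.
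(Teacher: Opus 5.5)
The paper gives no proof of this statement. It is quoted from Varnavides as background and used only as a black box, via a discretization, to show that the Varnavides function $c$ takes positive values. Your argument is the classical Varnavides averaging argument: apply Roth's theorem to the traces of $E$ on short progressions $P_{a,b}$, then double count. In its $\mathbb{Z}_p^d$ form it is correct, and it yields $\gtrsim_{\delta,d}N^{2d}$ non-trivial $3$APs for every $N\ge N(\delta)$, which is the intended reading of the statement.

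Three steps should be tightened.

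First, thresholding at the mean does not give a positive proportion of good pairs. Fix $b\neq 0$ and assume $p>n_0$, so that the $n_0$ points of $P_{a,b}$ are distinct. Then $|E\cap P_{a,b}|/n_0\in[0,1]$ has average over $a\in\mathbb{Z}_p^d$ exactly $\rho=|E|/p^d>4^{-d}\delta$. Hence at least a $\rho/2$ fraction of pairs satisfy $|E\cap P_{a,b}|\ge\rho n_0/2$, so $n_0$ should come from Roth's theorem at density $4^{-d}\delta/2$.

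Second, your small-$b$ lifting argument is valid: a $3$AP inside $P_{a,b}$ has $2k\le n_0-1$, so its span in each coordinate is less than $p/2$, and since $N<p/2$ the endpoints force $z=x+2kb$ in $\mathbb{Z}^d$. It is more than you need, though. All three terms lie in $E\subset\{0,\dots,N\}^d$, so the congruence $x+z\equiv 2y\pmod{p}$ together with $|x_i+z_i-2y_i|\le 2N<p$ already gives $x+z=2y$. Non-triviality follows from $y-x\equiv kb\not\equiv 0$. Consequently $b$ may range over all of $\mathbb{Z}_p^d\setminus\{0\}$. This gives $\gtrsim_{\delta,d}p^{2d}$ good pairs directly, and each $3$AP is hit at most $n_0^2$ times (recovering $b\equiv k^{-1}u$ again uses $p>n_0$).

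Third, the ``central box'' alternative you mention fails as written. For $\delta<1-2^{-d}$ the set $E$ may miss $\{N/4,\dots,3N/4\}^d$ entirely. A non-cyclic version should instead take $1\le b_i\le N/n_0^2$. Then only a boundary layer of relative size $O(d/n_0)$ is covered fewer than the maximal number of times, which is harmless once $n_0\gtrsim d/\delta$.
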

 
 To get a quantitative handle on this result, we introduce some related objects.
  
  \begin{definition}
   Let $G$ be a compact abelian group with Haar measure $m$. We define the multilinear functional $\Lambda_3: \paren{L^{\infty}(G)}^3\ra\C$ via the formula
   \[\Lambda_3(f_0,f_1,f_2)= \iint f_0(x)f_1(x-r)f_2(x-2r)\,dm(x)\,dm(r).\]
   Given a single $f\in L^{\infty}(G)$, we define
   \[\Lambda_3(f) = \Lambda_3(f,f,f).\]
  \end{definition}
  
\begin{definition}\label{def:varnfunction}
 The Varnavides function $c:(0,\infty)\ra(0,\infty)$ is given by
 \[c(t) = \inf\set{\Lambda_3(f) : f\geq 0, \norm{f}_{L^1(\T)}=1, \norm{f}_{L^{\infty}(\T)}\leq t}.\]
 
 The $L^2$ Varnavides function is the function $c^{(2)}:(0,\infty)\ra(0,\infty)$ given by
 \[c^{(2)}(t) = \inf\set{\Lambda_3(f) : f\geq 0, \norm{f}_{L^1(\T)}\geq t, \norm{f}_{L^2(\T)}=1}.\]
\end{definition}

A discretization argument together with Theorem \ref{thm:varnThm} gives that the Varnavides function $c$ above does indeed map $(0,\infty)$ to $(0,\infty)$, while the same statement for the function $c^{(2)}$ will be given as Corollary \ref{thm:L2count} in Section \ref{s:fourierlessmass}. We summarize the equivalent functional form of the first inequality below.
 
 \begin{corr}\label{thm:varnavides}
 Let $X\in\set{\T^d,\Z_N^d}$. Then for any $M,\delta>0$ there exists a constant $c=c(M,\delta)>0$ with the following property. Let $f:X\ra[0,M]$ satisfy $\norm{f}_{L^1(X)}\geq\delta$. Then
 \[\Lambda_3(f)\geq c.\]
 \end{corr}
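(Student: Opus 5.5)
The plan is to obtain Corollary \oldref{thm:varnavides} from Varnavides' Theorem (Theorem \oldref{thm:varnThm}) by discretizing. The case $X=\Z_N^d$ comes first, and the case $X=\T^d$ is then deduced from it by averaging over a fine grid. Two normalizations come first. By homogeneity of $\Lambda_3$ we have $\Lambda_3(f)=M^3\Lambda_3(f/M)$, so we may assume $M=1$ and $f:X\ra[0,1]$ with $\int f\geq\delta$. If $\delta>1$ there is nothing to prove, so assume $\delta\leq 1$.

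\textbf{Step 1 (from functions to sets, discrete case).} For $f:\Z_N^d\ra[0,1]$ with mean $\geq\delta$, the set $E=\set{x: f(x)\geq\delta/2}$ satisfies $|E|\geq (\delta/2) N^d$, since $\int f\leq \delta/2+|E|/N^d$. Because $f\geq (\delta/2)1_E$ pointwise and $f\geq 0$, we get $\Lambda_3(f)\geq(\delta/2)^3\Lambda_3(1_E)$. This reduces matters to showing $\Lambda_3(1_E)\geq c'(\delta/2)>0$ for sets of density $\geq\delta/2$ in $\Z_N^d$, uniformly in $N$.

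\textbf{Step 2 (cyclic vs.\ box progressions).} Theorem \oldref{thm:varnThm} is stated for boxes $\set{0,\dots,N}^d$, whereas $\Lambda_3$ counts progressions mod $N$. Progressions in the box are in particular progressions mod $N$, so the main work is to place a positive proportion of $E$ inside a box. We split $\Z_N^d$ into $2^d$ sub-boxes of side about $N/2$. By pigeonhole, one sub-box $Q$ contains at least a $\delta/2$ proportion of its points from $E$. Theorem \oldref{thm:varnThm} applied inside $Q$ gives $\gtrsim_\delta N^{2d}$ progressions $(x,x+r,x+2r)$ in $E\cap Q$. Hence
\[\Lambda_3(1_E)=N^{-2d}\#\set{(x,r): x,x-r,x-2r\in E}\gtrsim_\delta 1.\]
For $N<N(\delta)$, the trivial progressions $r=0$ alone give $\Lambda_3(1_E)\geq N^{-d}|E|N^{-d}\geq \delta N(\delta)^{-d}/2$, which is a positive constant. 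Taking the minimum of the two bounds handles all $N$.

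\textbf{Step 3 (torus).} For $f:\T^d\ra[0,1]$ we would like to pass to grid averages, but $\Lambda_3$ is not monotone under averaging, so instead we average the progression count. We set
\[F_y(n)=f\paren{y+n/N},\qquad y\in[0,1/N)^d,\ n\in\Z_N^d.\]
Then
\[\Lambda_3(f)=\int_{[0,1/N)^d}\iint_{\T^d} f(y+x)f(y+x-r)f(y+x-2r)\,\ldots\]
Restricting $r$ to grid translates plus a small perturbation should give, by Fubini,
\[\Lambda_3(f)=\mathbb{E}_{y}\,\mathbb{E}_{s}\,\Lambda_3^{\Z_N^d}\paren{\text{sampled functions}}.\]
The cleanest route avoids sampling altogether. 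We write $x\in\T^d$ and $r\in\T^d$, and note that $(x,r)\mapsto(x,x-r,x-2r)$ is measure-preserving in the appropriate sense. We then apply the discrete result to the family $n\mapsto f(y+n/N)$ with step $r=s+m/N$ for a common fixed $s$. For a.e.\ $(y,s)$ this family has values in $[0,1]$, and its mean over $y$ is $\int f\geq\delta$. Running the Step 1 argument inside the expectation over $(y,s)$ then gives $\Lambda_3(f)\gtrsim_\delta 1$. The point needing care is that the mean over $n$ is $\geq\delta$ only on average in $y$. This is fixed by applying Step 1 to those $y$ with mean $\geq\delta/2$, which have measure $\geq\delta/2$ by the same Markov-type argument.

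The main obstacle is Step 3: parametrizing $r$ correctly, so that the discretization of a general bounded function on $\T^d$ is genuinely controlled by the discrete count. An alternative route is to take $N=p$ prime and use the Lebesgue differentiation theorem, but the sampling argument above, with the common irrational-free shift $s$, is what I would try first.
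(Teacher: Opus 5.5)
Your discrete case (Steps 1--2) is sound, and your overall strategy of discretizing and invoking Varnavides' Theorem is exactly what the paper intends. The paper gives only a one-line indication (``a discretization argument together with Varnavides' Theorem''). The split into $2^d$ sub-boxes is unnecessary, since integer progressions inside $\{0,\dots,N-1\}^d$ already inject into cyclic ones, but it does no harm.

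\textbf{The gap is in Step 3.} Write $x=y+n/N$ and $r=s+m/N$ with $y,s\in[0,1/N)^d$. Then $x-r=(y-s)+(n-m)/N$ and $x-2r=(y-2s)+(n-2m)/N$, so Fubini gives
\[
\Lambda_3(f)=\mathbb{E}_{y,s}\,\Lambda_3^{\mathbb{Z}_N^d}\bigl(F_y,F_{y-s},F_{y-2s}\bigr),
\]
where $\mathbb{E}_{y,s}$ is the average over $[0,1/N)^{2d}$. For almost every $s$ (every $s\neq 0$), the three slots are samples of $f$ on three \emph{different} shifted grids. The fiber integrand is therefore a trilinear form in three different functions.

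Neither Step 1 nor the discrete Varnavides bound says anything about such a form. $\Lambda_3(F_0,F_1,F_2)$ can vanish even when each $F_i$ has mean bounded below. For a merely measurable $f$ and fixed $N$, there is no relation at all between $F_y$ and $F_{y-s}$. So ``running Step 1 inside the expectation over $(y,s)$'' yields no lower bound, and fixing a ``common shift $s$'' does not remove the problem. Your Markov argument over $y$ is correct, but it addresses a different issue.

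\textbf{The missing ingredient is an approximation step.} It exploits the fact that your discrete constant $c_{\mathrm{disc}}(\cdot)$ is uniform in $N$, so you may take $N$ large \emph{depending on $f$}. For $[0,1]$-valued functions on $\mathbb{Z}_N^d$,
\[
\bigl|\Lambda_3(F_0,F_1,F_2)-\Lambda_3(F_0)\bigr|\le\|F_1-F_0\|_{L^1}+\|F_2-F_0\|_{L^1},
\]
because $x\mapsto x-r$ and $x\mapsto x-2r$ are bijections for fixed $r$. Moreover, $\mathbb{E}_y\|F_{y-t}-F_y\|_{L^1(\mathbb{Z}_N^d)}=\|f(\cdot-t)-f\|_{L^1(\mathbb{T}^d)}$. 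Combining these with your Markov argument gives
\[
\Lambda_3(f)\ge\tfrac{\delta}{2}\,c_{\mathrm{disc}}(\delta/2)-2\sup_{|t|\le 2\sqrt{d}/N}\|f(\cdot-t)-f\|_{L^1(\mathbb{T}^d)}.
\]
The error term tends to $0$ as $N\to\infty$ by continuity of translation in $L^1$, which proves the claim.

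An equivalent fix: replace $f$ by its averages $f_N$ over the cells of side $1/N$, so that $\|f-f_N\|_{L^1}\to 0$ and $|\Lambda_3(f)-\Lambda_3(f_N)|\le 3\|f-f_N\|_{L^1}$. For the step function $f_N$, the three points land in cells $n$, $n-m$, $n-2m$ whenever the relative offsets satisfy $y'\ge 2s'$ coordinatewise. That region has relative measure $4^{-d}$, so $\Lambda_3(f_N)\ge 4^{-d}\Lambda_3^{\mathbb{Z}_N^d}(a)$, where $a$ is the vector of cell values.
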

 
 Throughout, given an element $r\in\T$, we will write
 \[\abs{r}= \abs{r}_{\T} = \min\{r \text{(mod} 1), (1-r)\text{(mod} 1)\},\]
 and similarly for $r=(r_1,\dots,r_d)\in\T^d$ we define
 \[\abs{r}=\abs{r}_{\T^d}=\paren{\sum_{i=1}^d \abs{r_i}_{\T}^2}^{\half}.\]

\subsection{Proofs}
 
A non-zero positive measure $\mu$ on $\T^d$ with $\widehat{\mu}\in\ell^2$ must necessarily contain within its support non-trivial three-term arithmetic progressions: the qualitative statement follows from the observation that since $\mu\in L^2$, $\supp\mu$ is a set of positive Lebesgue measure, together with the statement that such sets contain non-trivial $3$APs (and indeed affine images of any finite point configuration), a consequence of the Lebesgue density theorem and the pigeonhole principle. The quantitative statement is the following consequence of the arithmetic regularity lemma, whose discussion we postpone until Subsection \ref{s:reglemma}.
  
 \begin{corr}\label{thm:L2count}
   Suppose that $X=\T^d$ or $\Z_N^d$, and that $f:X\ra[0,\infty)$ satisfies $\norm{f}_{L^1}\geq\delta,\norm{f}_{L^2} < M< \infty$. Then there exists a number $c=c^{(2)}(M,\delta)$ such that 
  \[\Lambda_3(f)\geq c.\]
 \end{corr}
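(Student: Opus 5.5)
We prove Corollary \ref{thm:L2count} by reducing to the $L^\infty$ Varnavides bound, Corollary \ref{thm:varnavides}, through an arithmetic regularity decomposition. Normalize so that $\norm{f}_{L^2}<M$ and $\norm{f}_{L^1}\geq\delta$. By Cauchy--Schwarz, the set where $f$ is large carries little mass: with $K=4M^2/\delta$,
\[\int_{f>K} f\leq \frac{\norm{f}_{L^2}^2}{K}\leq \frac{\delta}{4}.\]
A naive truncation $f\mapsto\min(f,K)$ is useless, because $\Lambda_3$ is not monotone in a way that survives truncation of only one argument. It does survive truncation of all three: since $f\geq 0$ we have $\Lambda_3(f)\geq\Lambda_3(\min(f,K))$ pointwise in the integrand, and $\min(f,K)$ satisfies $0\leq \min(f,K)\leq K$ with $\norm{\min(f,K)}_{L^1}\geq 3\delta/4$. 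Corollary \ref{thm:varnavides} then gives
\[\Lambda_3(f)\geq c\paren{K,\tfrac{3\delta}{4}}>0,\]
so one may take $c^{(2)}(M,\delta)=c(4M^2/\delta,3\delta/4)$. The only inputs are positivity of $f$, which makes the integrand $f(x)f(x-r)f(x-2r)$ monotone in $f$, and the Chebyshev-type tail estimate above.

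This argument is uniform in $N$ on $\Z_N^d$ and works equally on $\T^d$, since Corollary \ref{thm:varnavides} is stated for both. I do not expect any step to be a serious obstacle.

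If one instead wants a bound depending polynomially on the Varnavides function applied to an $L^2$-regular part, as the paper apparently does via the arithmetic regularity lemma, the plan is different. Decompose $f=f_{str}+f_{sml}+f_{unf}$ with $U^2$ regularity, where $f_{str}=f*\beta$ is an average over a Bohr set, hence nonnegative and $L^2$-bounded. Use the standard counting lemma $|\Lambda_3(g_0,g_1,g_2)|\leq \norm{g_0}_{U^2}\norm{g_1}_{L^2}\norm{g_2}_{L^2}$, adjusted for the dilation by $2$ (fine when $N$ is odd, or on $\T^d$). Then handle $f_{str}$ by the same truncation trick. The delicate point in that route is keeping the errors smaller than $c(K,3\delta/4)$, which the regularity lemma permits by choosing its growth function in terms of this quantity.
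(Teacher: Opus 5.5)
Your argument is correct, and it takes a different and much more elementary route than the paper's.

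\textbf{The paper's route.} In the discrete case it invokes an $L^2$ arithmetic regularity lemma (Lemma \ref{thm:reglemma_extended}) and then defers to Green--Tao's regularity-plus-counting proof of Szemer\'edi's theorem. For $\T^d$ it mollifies to a band-limited $f_N$, transfers to $\Z_p^d$ (using Herglotz's theorem to keep positivity), and quotes the discrete case.

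\textbf{Your route.} You use only two facts. First, the integrand $f(x)f(x-r)f(x-2r)$ is monotone in $f\ge 0$, so $\Lambda_3(f)\ge\Lambda_3(\min(f,K))$. Second, $\int_{\{f>K\}}f\le\|f\|_{L^2}^2/K$. Together these place $\min(f,K)$ directly within the scope of the bounded Varnavides bound, Corollary \ref{thm:varnavides}, on either $X$.

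\textbf{What your route buys.} You get an explicit bound $c^{(2)}(M,\delta)\ge c(4M^2/\delta,3\delta/4)$ in terms of the $L^\infty$ Varnavides function. You need no regularity, counting or transference step. The hypothesis can also be weakened: any $L^p$ bound with $p>1$, indeed uniform integrability of $f$, suffices.

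Your route also avoids a weak point in the paper's. Lemma \ref{thm:reglemma_extended} begins with the same truncation, but it places $f-\min(f,K)$ in the $L^2$-small error. That piece need not be $L^2$-small under $\|f\|_{L^2}\le 1$ alone, since it is not bounded by $K$: take $f$ with a tall spike carrying half of its $L^2$ norm. You need only the $L^1$ smallness of that piece, and you discard it by positivity rather than estimating it. For this particular lower bound, the regularity machinery adds nothing.

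\textbf{Presentation.} Three parts of your write-up do not describe the argument you actually give and are best deleted:
\begin{itemize}
\item the opening sentence (``through an arithmetic regularity decomposition'');
\item the remark that truncation is ``useless'';
\item the final paragraph sketching a regularity route.
\end{itemize}
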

 
 On the other hand, if $\widehat{\mu}\in\ell^3$ with sufficiently small norm, then again lower bounds on $\Lambda_3(\mu)$ are available.

  \begin{lemma}\label{thm:L3count}
   Suppose that $\mu$ is a positive measure on $\T^d$ or $\Z_N^d$ with mass $\norm{\mu}\geq \delta$ and $\norm{\widehat{\mu}}_{\ell^3}\leq \sqrt[3]{1+c}\hspace{1pt}\delta$. Then
  \[\Lambda_3(\mu)\geq (1-c)\delta^3.\]
  \end{lemma}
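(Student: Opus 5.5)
Write $\Lambda_3(\mu)$ on the Fourier side and separate the zero frequency. On $G=\T^d$ or $\Z_N^d$ we have
\[\Lambda_3(\mu)=\sum_{\xi\in\widehat{G}}\widehat{\mu}(\xi)\,\widehat{\mu}(-2\xi)\,\widehat{\mu}(\xi),\]
or an equivalent form with signs and conjugations depending on convention. Since $\mu$ is positive, $\widehat{\mu}(-\xi)=\overline{\widehat{\mu}(\xi)}$, and this is, up to conjugation, $\sum_\xi \widehat{\mu}(\xi)^2\overline{\widehat{\mu}(2\xi)}$.

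The $\xi=0$ term is $\widehat{\mu}(0)^3=\norm{\mu}^3$, using positivity of $\mu$ again. The remaining terms are bounded in absolute value by
\[\sum_{\xi\neq0}\abs{\widehat{\mu}(\xi)}^2\abs{\widehat{\mu}(2\xi)}.\]
By H\"older with exponents $(3/2,3)$ this is at most
\[\Big(\sum_{\xi\neq 0}\abs{\widehat{\mu}(\xi)}^3\Big)^{2/3}\Big(\sum_{\xi\ne0}\abs{\widehat{\mu}(2\xi)}^3\Big)^{1/3}\le \sum_{\xi\neq0}\abs{\widehat{\mu}(\xi)}^3,\]
because $\xi\mapsto2\xi$ is injective on $\Z^d$. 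On $\Z_N^d$ with $N$ odd it is a bijection. For even $N$ the map may fail to be injective; I would handle that by noting that the preimages of each frequency have size at most $2^d$, or restrict to odd $N$ as the paper's applications allow.

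The hypothesis then gives
\[\sum_{\xi\ne0}\abs{\widehat{\mu}(\xi)}^3=\norm{\widehat{\mu}}_{\ell^3}^3-\norm{\mu}^3\le (1+c)\delta^3-\norm{\mu}^3.\]
Combining,
\[\Lambda_3(\mu)\ge \norm{\mu}^3-\big((1+c)\delta^3-\norm{\mu}^3\big)=2\norm{\mu}^3-(1+c)\delta^3\ge 2\delta^3-(1+c)\delta^3=(1-c)\delta^3,\]
where the last inequality uses $\norm{\mu}\ge\delta$. This gives the claim, and $\Lambda_3(\mu)$ is real because it equals the real quantity $\mu$-measure of 3APs.

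The main care is in justifying the Fourier identity for a measure rather than a bounded function. On $\T^d$ I would interpret $\Lambda_3(\mu)$ as a limit over smooth approximations $\mu*\phi_\epsilon$. Since $\widehat{\mu}\in\ell^3$, the triple sum converges absolutely by the same H\"older bound, and dominated convergence passes the identity to the limit because $\abs{\widehat{\phi_\epsilon}}\le1$ and $\widehat{\phi_\epsilon}\to1$ pointwise. The constant threshold $\sqrt[3]{2}\delta$ appearing in Theorem~\ref{thm:integerCor} corresponds to the endpoint $c=1$.
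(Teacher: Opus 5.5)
Your argument is the paper's: isolate $\widehat{\mu}(0)^3$ and bound the remaining frequencies by H\"older with exponents $(3/2,3)$, using injectivity of $\xi\mapsto 2\xi$. Your closing chain $\Lambda_3(\mu)\ge 2\norm{\mu}^3-(1+c)\delta^3\ge(1-c)\delta^3$ is slightly more careful than the paper's. The paper writes $\abs{\Lambda_3(\mu)-\delta^3}=\abs{\sum_{\xi\neq0}\widehat{\mu}(\xi)^2\widehat{\mu}(-2\xi)}$, which is only right when $\widehat{\mu}(0)=\delta$ exactly.

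The step that fails is your first remedy for even $N$. Counting the $2^d$ preimages of each frequency loses a factor of up to $2^{d/3}$ in the H\"older step, so it cannot give $(1-c)\delta^3$. In fact no argument can, because the lemma is false there.

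Here is a counterexample. Take $d=3$ and $8\mid N$. Set $\widehat{\mu}(0)=1$, let $\widehat{\mu}=b$ at the sixteen frequencies $\pm(e_1+\omega)$ with $\omega\in\{0,N/2\}^3$, let $\widehat{\mu}(\pm 2e_1)=-\tfrac32 b$, and let $\widehat{\mu}=0$ elsewhere.
\begin{itemize}
\item Since $\sum_{\xi\neq0}\abs{\widehat{\mu}(\xi)}\le 19b$, we have $\mu\ge 1-19b>0$ for small $b>0$, and $\norm{\mu}=1=\delta$.
\item The hypothesis holds with $c=\sum_{\xi\neq0}\abs{\widehat{\mu}(\xi)}^3=16b^3+2\paren{\tfrac32 b}^3=\tfrac{91}{4}b^3$.
\item Because $2\omega=0$, each of the sixteen frequencies $\xi=\pm(e_1+\omega)$ has $-2\xi=\mp 2e_1$. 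So each contributes $b^2\cdot\paren{-\tfrac32 b}=-\tfrac32 b^3$.
\item The terms $\xi=\pm 2e_1$ vanish, since $8\mid N$ keeps $\pm4e_1$ outside the support.
\item Hence $\Lambda_3(\mu)=1-24b^3<1-\tfrac{91}{4}b^3=1-c$.
\end{itemize}

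So you must restrict to odd $N$ (or to $\T^d$), where doubling is injective on the dual group. The paper's proof uses that injectivity without saying so, and needs the same restriction.
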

  \begin{proof}
   We have
   \[\paren{\sqrt[3]{1+c}\hspace{1pt}\delta}^3 \geq \norm{\widehat{\mu}}_{\ell^3}^3 = \widehat{\mu}(0)^3+\sum_{\xi\neq 0} \abs{\widehat{\mu}(\xi)}^3 \geq \delta^3+\sum_{\xi\neq 0} \abs{\widehat{\mu}(\xi)}^3\]
   so
   \[\norm{\widehat{\mu}}_{\ell^3(\xi\neq 0)}\leq \sqrt[3]{c}\delta.\]
   Further, note by H\"older that
   \[\sum_{\xi\neq 0} \abs{\widehat{\mu}(\xi)^2\widehat{\mu}(-2\xi)}\leq \paren{\sum_{\xi\neq 0} \abs{\widehat{\mu}(\xi)}^3}^{2/3}\paren{\sum_{\xi\neq 0} \abs{\widehat{\mu}(2\xi)}^3}^{1/3}\leq \norm{\widehat{\mu}}_{\ell^3(\xi\neq 0)}^3.\]
   Then
   \[\abs{\Lambda_3(\mu) -\delta^3}= \abs{\sum_{\xi} \widehat{\mu}(\xi)^2\widehat{\mu}(-2\xi)-\delta^3} = \abs{\sum_{\xi\neq 0} \widehat{\mu}(\xi)^2\widehat{\mu}(-2\xi)} \leq c\delta^3.\]
  \end{proof}

  In summary: If $\mu$ is a positive, non-zero measure on $\T^d$ and $\norm{\widehat{\mu}}_{\ell^p}= M$, then if $M$ is sufficiently small and $p=3$ then $\Lambda_3(\mu)>0$, while if $p=2$ then no bound whatsoever on $M$ is required to conclude that $\Lambda_3(\mu)>0$. It is natural to ask whether one can interpolate between these results for $p\in(2,3)$. Indeed we can, as Proposition \ref{thm:LpMassLemma} below shows.

  \begin{prop}[Mass lemma from $\hat{\ell}^p$ bounds]\label{thm:LpMassLemma}
  \label{thm:MassLemmaWithoutFourierDecay}
   Let $\delta,M>0$. Then there exists a $q=q(M,\delta)$ and a number $c=c^{(q)}(M,\delta)>0$ with 
   \[q(M,\delta) = 2 + \min_{T>1}\paren{3\paren{1-\frac{1}{T}},\frac{C_1}{\paren{C_2c^{(2)}\paren{\frac{\delta}{M}}}^{T}\ln\paren{C_2c^{(2)}\paren{\frac{\delta}{M}}}^{T}}}\]
   for some constants $C_1,C_2$ and where $c^{(2)}(\frac{\delta}{M})$ is the $L^2$ Varnavides function,
   such that the following holds.
    Suppose that $\mu$ is a positive measure on $[0,1]^d$ or $[N]^d$ with $\norm{\mu}\geq\delta$ and $\norm{\widehat{\mu}}_{\ell^q}\leq M$. Then 
    \[\Lambda_3(\mu)\geq c^{(q)}(M,\delta).\]
    
    Equivalently, for any $q\in[2,3]$ there is an $M=M(q)$ and a $c=c^{(q)}(M,\delta)>0$ such that if $\norm{\mu}\geq\delta$ and $\norm{\widehat{\mu}}_{\ell^q}\leq M$ then $\Lambda_3(\mu)\geq c$, with $M(q)\uparrow \infty$ as $q\downarrow 2$.
  \end{prop}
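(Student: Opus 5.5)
We decompose $\widehat{\mu}$ by magnitude. The large coefficients give a structured piece that behaves like an $L^2$ function, so Corollary \ref{thm:L2count} applies to it. The small coefficients give a pseudorandom piece whose contribution to $\Lambda_3$ we control by the argument of Lemma \ref{thm:L3count}, using only the $\ell^q$ bound.

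\textbf{Step 1: splitting the spectrum.} Fix a threshold $\eta>0$ and put
\[
S=\set{\xi:\abs{\widehat{\mu}(\xi)}>\eta}.
\]
By Chebyshev, $\abs{S}\leq (M/\eta)^q$. Off $S$ we have
\[
\sum_{\xi\notin S}\abs{\widehat{\mu}(\xi)}^3\leq \eta^{3-q}M^q,
\]
and on $S$ we have $\sum_{S}\abs{\widehat\mu}^2\leq \eta^{2-q}M^q$.

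A sharp Fourier cutoff to $S$ does not preserve positivity, so we smooth it. Following the Bohr-set decomposition (Lemma \ref{thm:interpolate}), we let $B$ be a regular Bohr set with frequency set $S$ and radius $\rho$, let $\beta_B$ be its normalized indicator, and write
\[
\mu=\mu*\beta_B*\beta_B+(\mu-\mu*\beta_B*\beta_B)=:f+g.
\]
The piece $f$ is nonnegative and has mass $\norm{\mu}\geq\delta$. Its $L^2$ norm is controlled by Parseval, since $\widehat f=\widehat\mu\,\widehat{\beta_B}^2$ and $\abs{\widehat{\beta_B}}\le 1$. The piece $g$ has Fourier coefficients of size $\lesssim\rho\abs{\widehat\mu(\xi)}$ on $S$ and $\leq 2\abs{\widehat\mu(\xi)}$ off $S$.

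\textbf{Step 2: lower bound from the structured part.} Apply Corollary \ref{thm:L2count} to $f$ (normalized appropriately), with $\norm f_{L^2}\lesssim M$. This gives $\Lambda_3(f)\geq c^{(2)}(\delta/M)$ up to constants. This is the source of the quantity $C_2c^{(2)}(\delta/M)$ in the formula for $q(M,\delta)$.

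\textbf{Step 3: the error terms.} Expand $\Lambda_3(f+g)$ by multilinearity. Each of the seven terms involving $g$ is written in Fourier form as $\sum_\xi \widehat{h_1}(\xi)\widehat{h_2}(\xi)\widehat{h_3}(-2\xi)$. Each is bounded using H\"older with exponents $(3,3,3)$, exactly as in Lemma \ref{thm:L3count}:
\[
\abs{\Lambda_3(h_1,h_2,h_3)}\leq \norm{\widehat h_1}_{\ell^3}\norm{\widehat h_2}_{\ell^3}\norm{\widehat h_3}_{\ell^3}.
\]
Every such term contains at least one factor $g$. We split $\norm{\widehat g}_{\ell^3}$ into its parts off $S$ and on $S$:
\[
\norm{\widehat g}_{\ell^3}\lesssim \eta^{(3-q)/3}M^{q/3}+\rho\,\abs{S}^{1/3}M .
\]
The other factors are bounded crudely by $\norm{\widehat\mu}_{\ell^3}\leq\norm{\widehat\mu}_{\ell^q}\leq M$.

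\textbf{Step 4: choosing parameters.} We need the total error below $\tfrac12 c^{(2)}(\delta/M)$. This requires
\[
\eta^{3-q}\ll \paren{c^{(2)}/M^3}^{3}M^{-q}\quad\text{and}\quad \rho\ll c^{(2)}(\eta/M)^{q/3}M^{-3}.
\]
Write $\eta=\epsilon^{T}$ with $\epsilon\approx C_2c^{(2)}(\delta/M)$. Then the requirement on $\eta$ forces $3-q\gtrsim 3/T$, which is the constraint $q-2\leq 3(1-1/T)$.

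The condition on $\rho$ is where the second constraint enters. The regular Bohr set must exist with radius $\rho$ and rank $\abs{S}\le (M/\eta)^q$, and we need a lower bound on its size (on $[N]^d$, for $N$ large) or an equivalent regularity property. The costs here are exponential in $\abs{S}\log(1/\rho)$. Balancing them against the loss in $q$ gives the term $C_1/(\epsilon^T\ln\epsilon^T)$. Taking the minimum over $T>1$ yields the stated $q(M,\delta)$, with $c^{(q)}=\tfrac12 c^{(2)}(\delta/M)$ (adjusted by constants). The equivalent reformulation follows by inverting this relation: as $M\to\infty$, $c^{(2)}(\delta/M)\to 0$ and hence $q(M,\delta)\downarrow 2$, which is the same as saying $M(q)\uparrow\infty$ as $q\downarrow 2$.

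\textbf{Main obstacle.} The hardest step is Step 3. We must show that smoothing by the Bohr set leaves $g$ with small $\ell^3$ Fourier norm on $S$. At the same time, $f$ must stay positive with $L^2$ norm comparable to $M$. I would first try the $L^2$ arithmetic regularity lemma of Subsection \ref{s:reglemma} as a black box, with $U^2$ error measured in $\ell^3$ through interpolation between $\ell^q$ and $\ell^\infty$. Explicit Bohr-set computations would be the fallback.
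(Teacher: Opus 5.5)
Your architecture matches the paper's: Bohr-set smoothing of $\mu$ into a nonnegative piece plus an $\ell^3$-small remainder, Corollary \ref{thm:L2count} on the first piece, and $(3,3,3)$-H\"older on the second. But the one step that carries the content is not established.

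In Step 1 you assert that $\norm{f}_{L^2}$ is controlled by Parseval because $\abs{\widehat{\beta_B}}\le 1$. That only gives $\norm{f}_{L^2}\le\norm{\widehat\mu}_{\ell^2}$, and $\widehat\mu$ need not lie in $\ell^2$. On $[0,1]^d$ that would force $\mu\in L^2$, which excludes exactly the singular measures the proposition is designed for. On $[N]^d$ the $\ell^2$ norm is not bounded in terms of $M,\delta$ uniformly in $N$. Your spectral splitting does not repair this. It bounds $\sum_{S}\abs{\widehat\mu}^2$ only by $\eta^{2-q}M^q$, which is not $O(M^2)$. It gives nothing off $S$, where there are infinitely many coefficients of size up to $\eta$. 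So the input $\norm{f}_{L^2}\lesssim M$ of Step 2 is unproved. You do flag it in your last paragraph, but the arithmetic regularity lemma you propose there cannot supply it, since Lemma \ref{thm:reglemma_extended} takes an $L^2$ bound as a hypothesis.

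The missing idea is to use the decay of $\widehat{\beta_B}$ off $S$ through H\"older against the $\ell^q$ bound. With $p=2q/(q-2)$,
\[
\norm{\widehat f}_{\ell^2}\le\norm{\widehat\mu}_{\ell^q}\,\norm{\widehat{\beta_B}^{\,2}}_{\ell^{p}}\le M\abs{B}^{-1/p}\le M\,(c\rho)^{-\abs{S}(q-2)/(2q)}.
\]
This uses $\norm{\widehat{\beta_B}}_{\ell^r}^r\le\norm{\widehat{\beta_B}}_{\ell^2}^2=\abs{B}^{-1}$ for $r\ge 2$, together with the standard lower bound $\abs{B}\ge(c\rho)^{\abs{S}}$ on the normalized measure of a Bohr set; no regularity of $B$ is needed. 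Requiring this to be $O(M)$ forces $(q-2)\abs{S}\log(1/\rho)\lesssim 1$ with $\abs{S}\le(M/\eta)^q$. This is exactly where the second term $C_1/(\epsilon^T\ln\epsilon^T)$ in $q(M,\delta)$ comes from; it is Steps 2.1--2.2 of the paper's proof of Lemma \ref{thm:interpolate}. In your write-up the size of $B$ never enters an inequality, so Step 4 reverse-engineers that term rather than deriving it.

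Conversely, the part you single out as hardest is immediate. The bound $\abs{1-\widehat{\beta_B}(\xi)}\lesssim\rho$ for $\xi\in S$ follows directly from the definition of $B$, giving $\norm{\widehat g}_{\ell^3(S)}\lesssim\rho\norm{\widehat\mu}_{\ell^3}\le\rho M$. The $\ell^2$ bound above is the only place where $q$ close to $2$, rather than merely $q<3$, is used. Once it is inserted, your argument becomes the paper's proof.
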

  
  Theorem \ref{thm:integer} follows as a corollary.  Again, see Definition \ref{def:varnfunction} for the definition of the $L^2$ Varnavides function $c^{(2)}$.
  
    \begin{thm}\label{thm:integer}
   Let $E\subset[N]$ be a set with density $\delta=\delta_E\gg N^{-\half}$ which satisfies a bound
   \[\norm{\widehat{E}}_{\ell^q}\leq \delta M\]
   where 
   \[q\leq q(M,\delta) := 2 + \min_{T>1}\paren{3\paren{1-\frac{1}{T}},\frac{C_1}{\paren{C_2c^{(2)}\paren{\frac{\delta}{M}}}^{T}\ln\paren{C_2c^{(2)}\paren{\frac{\delta}{M}}}^{T}}}.\]
  
   Then provided $N$ is sufficiently large depending on $q$ and $M$ the set $E$ contains non-trivial $3$APs. Alternatively, for any $q\in[2,3]$ and $\delta>0$ there is an $M=M(\delta,q)\in [0,\infty]$ such that for all sufficiently large $N$ a set $E\subset[N]$ of density $\geq\delta$ with $\norm{\widehat{E}}_{\ell^q}<  M$ contains  a non-trivial $3$AP, and $M$ is monotonic decreasing in its second argument with $M(\delta,3) = \sqrt[3]{2}\delta$ and $M(\delta,2)=\infty$.
  \end{thm}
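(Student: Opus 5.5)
Theorem \ref{thm:integer} will be obtained directly from Proposition \ref{thm:LpMassLemma}, applied on $\Z_N$ to the measure $\mu = 1_E$ with normalized counting measure. Then $\norm{\mu}=\delta_E=\delta$ and $\norm{\widehat{E}}_{\ell^q}\le \delta M$. To connect the normalizations, I will rescale: set $f = 1_E/\delta$, so that $\norm{f}_{L^1}=1$ and $\norm{\widehat f}_{\ell^q}\le M$. Proposition \ref{thm:LpMassLemma}, with the parameters $(M,1)$, or equivalently with $(\delta M,\delta)$ applied to $1_E$ directly, gives $\Lambda_3(1_E)\ge c>0$ whenever $q\le q(M,\delta)$, with $c$ independent of $N$. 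I will have to check that the formula's argument $c^{(2)}(\delta/M)$ matches the rescaling; any consistent choice gives a positive $c$ depending only on $(M,\delta,q)$.

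The next step separates trivial from non-trivial progressions. On $\Z_N$ with normalized measures,
\[
\Lambda_3(1_E)=N^{-2}\#\{(x,r)\in \Z_N^2 : x,x-r,x-2r\in E\}.
\]
The trivial solutions $r=0$ contribute $|E|/N^2=\delta/N$. So once $N > 2\delta/c$, which is where ``$N$ sufficiently large depending on $q$ and $M$'' enters, there are at least $\tfrac{c}{2}N^2$ solutions with $r\neq 0$. The hypothesis $\delta\gg N^{-1/2}$ is only needed for compatibility with the Gowers-type regime of Theorem \ref{gowersThm}; for fixed $\delta$ it is automatic. To pass from $\Z_N$-progressions to genuine progressions in $[N]$, I will use the standard embedding trick. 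Regard $E\subset[N]\subset \Z_{N'}$ with $N'$ prime and $N'\in[2N,4N]$, or restrict $E$ to $[N/3,2N/3)$ after pigeonholing. Wraparound is then impossible, so each $\Z_{N'}$-progression with $r\ne0$ is a genuine non-trivial 3AP in $\Z$. Passing to $\Z_{N'}$ changes the density by a factor of at most $4$ and changes $\ell^q$ Fourier norms by bounded factors. These losses can be absorbed into the constants $C_1,C_2$ (or into $M$).

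For the ``alternatively'' clause, I will invert the relation $q=q(M,\delta)$ in $M$ and define $M(\delta,q)$ as the supremum of admissible $M$. Monotonic decrease in $q$ follows because $\norm{\cdot}_{\ell^q}$ is nonincreasing in $q$: a bound in $\ell^{q}$ implies the same bound in $\ell^{q'}$ for every $q'\ge q$. The endpoint $q=2$ comes from Corollary \ref{thm:L2count}, since Parseval makes the $\ell^2$ bound automatic, giving $M=\infty$. The endpoint $q=3$ comes from Lemma \ref{thm:L3count}. Taking the constant there close to $1$ gives $\Lambda_3\ge(1-c)\delta^3>0$ whenever $\norm{\widehat E}_{\ell^3}<\sqrt[3]{2}\,\delta$, which yields the value $\sqrt[3]{2}\delta$.

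The main obstacle is the bookkeeping in the transfer step. Embedding $[N]$ into a larger cyclic group alters $\widehat{E}$. For $q<\infty$ the $\ell^q$ norm on the new group need not be controlled by the old one up to an absolute constant, because of the normalization $|\widehat E|\le\delta$ and the count of frequencies. My first attempt will be to interpret the hypothesis $\norm{\widehat E}_{\ell^q}$ directly on $\Z_{N'}$ with $E$ embedded, as the paper's conventions seem to allow. Only if that fails will I bound the norm via Hausdorff–Young/interpolation between $\ell^2$ and $\ell^\infty$ at the cost of adjusting $M$.
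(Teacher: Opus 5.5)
Your main line is the paper's proof. You normalize to $1_E/\delta$, apply Proposition \ref{thm:LpMassLemma} to get a lower bound $c^{(q)}(M,1)$ on $\Lambda_3$ that does not depend on $N$, and subtract the $r=0$ terms. Two of your side remarks need correcting.

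First, the hypothesis $\delta\gg N^{-1/2}$ is not there for compatibility with Theorem \ref{gowersThm}. It is exactly what the subtraction step uses. By homogeneity your constant is $c=\delta^{3}c^{(q)}(M,1)$, so your condition $N>2\delta/c$ reads $\delta^{2}N>2/c^{(q)}(M,1)$. This is the paper's observation that the trivial term for the normalized measure is $\delta^{-2}N^{-1}=o_N(1)$. It is what lets the threshold on $N$ depend only on $q$ and $M$ while $\delta$ decays with $N$. For fixed $\delta$ the conclusion already follows from Roth's theorem, so your ``for fixed $\delta$ it is automatic'' reading empties the statement.

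Second, the paper's proof never addresses wraparound: it subtracts the trivial terms from the $\Z_N$ count and stops. You are right to flag this, and re-embedding into $\Z_{N'}$ with $N'\ge 2N$ prime is the right fix. Your fallback, however, would not work. Interpolating between $\ell^2$ and $\ell^\infty$ only gives $\|\widehat{1_E}\|_{\ell^q}\le\delta^{1-1/q}$, which every set satisfies; in effect it replaces $M$ by $\delta^{-1/q}$ and throws the hypothesis away. The tool that actually transfers the hypothesis is the Marcinkiewicz--Zygmund inequality. Both Fourier transforms are samples, at $N$ and at $N'$ equally spaced points, of $P(t)=\sum_{x\in E}e^{-2\pi i x t}$, whose spectrum lies in an interval of $N$ consecutive integers. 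So for $1<q<\infty$ and $N\le N'\le 4N$ you get $\|\widehat{1_E}\|_{\ell^q(\Z_{N'})}\lesssim (N/N')^{1-1/q}\|\widehat{1_E}\|_{\ell^q(\Z_N)}\le (N'/N)^{1/q}\,\delta' M$, where $\delta'=|E|/N'$. This costs only an absolute factor in $M$.

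Your monotonicity argument and your identification of the endpoints for the ``alternatively'' clause go beyond the paper's proof, which does not treat that clause at all.
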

  \begin{proof}[Proof of Theorem \ref{thm:integer}.]
   Set $\mu=\frac{1_E}{\delta}$. Then $\norm{\widehat{\mu}}_{\ell^q}\leq M$ and $\norm{\mu}=1$, so by Proposition \ref{thm:MassLemmaWithoutFourierDecay}, 
   \[\Lambda_3(\mu)\geq c^{(q)}(M,1)\]
   independent of $N$.
   
   On the other hand, the trivial $3$APs within $E$ contribute
   \[\frac{1}{N^2}\sum_{x\in[N]}\prod_{i=0}^2 \mu(x) \mu(x) \mu(x) = \frac{1}{N^2} \frac{|E|}{\delta^3}=\frac{N}{|E|^2}=\delta^{-2}N^{-1}=o_{N}(1)\]
   since $\delta\gg N^{-\half}$. For large enough $N$, this will be less than $c^{(q)}(M,1)$, guaranteeing the existence of a non-trivial $3$AP within $E$.
  \end{proof}

  \begin{remark}
    In finite field settings, analogous questions arise. The arithmetic regularity lemma and inverse theorems (see \cite{greentaoinverse}) suggest that Fourier decay assumptions and assumptions on counts of $3$APs are morally equivalent in appropriate quantitative regimes. We explore this direction in forthcoming work.
  \end{remark}

 The proof of Proposition \ref{thm:LpMassLemma} will use the following lemma, whose proof we delay until the end of the present section.
  \begin{lemma}\label{thm:interpolate}
  For all $\eps\in(0,1)$ and $C>1$ there exists a $q=q(\eps,C)\in(2,3)$ such that for all finite borel measures $f$ with $\hat{f}\in\ell^q(\Z^d)$ there exists a decomposition
  \[f = g+h\]
  such that
  \begin{itemize}
   \item $g\geq 0$,
   \item $\norm{g}_{L^1}=\norm{f}$,
   \item $\big\lvert\hat{g}\big\rvert,\big\lvert{\hat{h}}\big\rvert\lesssim\big\lvert{\hat{f}}\big\rvert$,
   \item $\big\|{\hat{g}}\big\|_{\ell^2}\lesssim C\big\|{\hat{f}}\big\|_{\ell^q}$,
   
   and
   \item $\big\|{\hat{h}}\big\|_{\ell^3}\lesssim\eps$.
  \end{itemize} 
  Further, if we specify an upper bound $M$ for the $\ell^q$ norm of $\hat{f}$, we may take
  \eq{\label{pvalue}q = 2 + \min_{T>1}\paren{3\paren{1-\frac{1}{T}},\frac{2\ln C}{\paren{\frac{M}{\eps}}^{T}\ln\paren{\frac{M}{\eps}}^{T}}}.}
  \end{lemma}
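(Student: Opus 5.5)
We will build the decomposition from a Bohr-set (large spectrum) splitting, smoothed so that positivity of the main part survives. Let $\Lambda=\set{\xi : \abs{\hat f(\xi)}\geq \eta}$ be the large spectrum at a threshold $\eta>0$ chosen below; by Chebyshev, $\abs{\Lambda}\leq (M/\eta)^q$. We form the Bohr set $B=B(\Lambda,\rho)$ of frequencies $\Lambda$ and radius $\rho$, and let $\beta=1_B/\abs{B}$ (or, better, $\beta=\beta_1*\beta_1$ for a normalized Bohr-set indicator $\beta_1$, so that $\hat\beta=\abs{\hat\beta_1}^2\in[0,1]$). Set
\[g=f*\beta,\qquad h=f-f*\beta .\]
Since $f\geq0$ (positivity of $f$ is implicit, $f$ being the measure $\mu$ of Proposition \ref{thm:LpMassLemma}) and $\beta\geq0$, we get $g\geq0$ and $\norm{g}_{L^1}=\norm{f}$. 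Moreover $\hat g=\hat f\hat\beta$ and $\hat h=\hat f(1-\hat\beta)$ with $0\leq\hat\beta\leq1$, which gives $\abs{\hat g},\abs{\hat h}\leq\abs{\hat f}$ at once. The first three bullets are therefore automatic from the choice of a positive-definite probability kernel.

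For the $\ell^3$ bound on $h$, split frequencies into $\xi\notin\Lambda$ and $\xi\in\Lambda$. Off $\Lambda$ we use $\abs{\hat h}\leq\abs{\hat f}<\eta$ to get
\[\sum_{\xi\notin\Lambda}\abs{\hat h}^3\leq \eta^{3-q}\norm{\hat f}_{\ell^q}^q\leq \eta^{3-q}M^q .\]
On $\Lambda$, every $\xi$ satisfies $\abs{1-\hat\beta(\xi)}\lesssim\rho$ by the defining property of the Bohr set, so this part is at most $\rho^3\abs{\Lambda}M^3$. Choosing $\eta$ with $\eta^{3-q}M^q\approx\eps^3$ and $\rho$ small makes $\norm{\hat h}_{\ell^3}\lesssim\eps$.

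The $\ell^2$ bound on $g$ is the crux. We have $\norm{\hat g}_{\ell^2}^2=\sum\abs{\hat f}^2\hat\beta^2$, and the trouble is that $\hat\beta$ is not small off $\Lambda$. What we know is that $\sum\hat\beta^2=\norm{\beta}_2^2=1/\abs{B}\leq(C'/\rho)^{\abs{\Lambda}}$ by the standard Bohr-set size bound. Apply H\"older with exponents $q/2$ and $q/(q-2)$:
\[\norm{\hat g}_{\ell^2}^2\leq \norm{\hat f}_{\ell^q}^2\,\Big(\sum\hat\beta^{2q/(q-2)}\Big)^{(q-2)/q}\leq \norm{\hat f}_{\ell^q}^2\,\abs{B}^{-(q-2)/q},\]
using $\hat\beta\leq1$. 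We thus need $\abs{B}^{-(q-2)/q}\leq C^2$, i.e.\ roughly $(q-2)\abs{\Lambda}\ln(C'/\rho)\lesssim\ln C$. Since $\abs{\Lambda}\leq(M/\eta)^q$ and $\eta\approx(\eps^3/M^q)^{1/(3-q)}$, this is a condition of the form $(q-2)\,(M/\eps)^{T}\ln(M/\eps)^{T}\lesssim 2\ln C$, where $T$ records the exponent $q/(3-q)$ (bounded once $q\leq 2+3(1-1/T)$). That is the formula \eqref{pvalue}: taking $q$ at most this minimum closes the argument.

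The main obstacle is this last step, i.e.\ the self-referential dependence: $\abs{\Lambda}$ and $\eta$ themselves depend on $q$, so we must fix $T$ first, constrain $q$ so that $q/(3-q)\leq T$, and only then choose $\eta$ and $\rho$. The first thing to try is to take $\rho=\eps/(M\abs{\Lambda}^{1/3})$ (enough for the $\Lambda$-part) and check that the $\ln(1/\rho)$ factor is absorbed into $\ln(M/\eps)^T$. If the Bohr-set size bound is too lossy on $\Z^d$ rather than $\Z_N$, we would instead use a smooth Fej\'er-type kernel adapted to $\Lambda$ with the same properties.
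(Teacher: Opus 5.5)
Your proposal is correct and is essentially the paper's argument: the same large-spectrum Bohr set, $g=f*\beta$ with $\beta$ the normalized Bohr indicator, the $\ell^3$ bound on $\hat h$ by splitting on and off the spectrum, and the $\ell^2$ bound on $\hat g$ by H\"older against $\|\hat\beta\|_{\ell^{2q/(q-2)}}\le |B|^{-(q-2)/(2q)}$. The paper quotes that last bound from Tao--Vu, whereas you derive it from Parseval and $|\hat\beta|\le 1$. You also order the choices more carefully than the paper does (cap $q$ away from $3$ first, then shrink $q-2$), though the exponent $T$ must dominate is $3q/(3-q)$, not $q/(3-q)$, since $M/\eta=(M/\varepsilon)^{3/(3-q)}$ --- a harmless constant-factor slip.
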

  

  \begin{proof}[Proof of Proposition \ref{thm:LpMassLemma}]
  Set the measure to be $f=\mu$. Fix a number $\eps>0$ depending on $M$ and $\delta$ to be specified later. We use Lemma \ref{thm:interpolate} with $C\approx 4$ to find a $q=q(M,\eps)\in(2,3)$ and a decomposition
  \[f = g+h\]
  where
  \eq{
   & q = 2 + \min_{T>1}\paren{3\paren{1-\frac{T-1}{T}},\frac{C'}{\paren{\frac{M}{\eps}}^{T}\ln\paren{\frac{M}{\eps}}^{T}}}
   \\&\notag{}g\geq 0
   \\&\notag{} \abs{\hat{g}},\abs{\hat{h}}\lesssim\abs{\hat{f}},
   \\&\notag{} \norm{\hat{g}}_{\ell^2}\leq 4\norm{\hat{f}}_{\ell^q}\leq 4 M,
   \\&\notag{}\norm{\hat{g}}_{\ell^3}\leq\norm{\hat{f}}_{\ell^3}\leq\norm{\hat{f}}_{\ell^q}\leq 4M,
   \\&\label{goodbounds} \norm{\hat{h}}_{\ell^3}\lesssim\eps.
  }
 
 Rescaling $g$ by $(4M)^{-1}$, Corollary \ref{thm:L2count}  gives
 \eq{\label{gcount}\Lambda_3(g)\geq \paren{4M}^3c^{(2)}\paren{\frac{\delta}{4M}}.}
 
 We have
 \[\abs{\Lambda_3(f)-\Lambda_3(g)}\lesssim \abs{\Lambda_3(f_0,f_1,f-g)}\leq \sum \abs{\hat{f_0}(\xi)}\abs{\hat{h}(\xi)}\abs{\hat{f_1}(-2\xi)}\leq \norm{\hat{h}}_{\ell^3}\norm{\hat{f_0}}_{\ell^3}\norm{\hat{f_1}}_{\ell^3}\]
  where $f_0,f_1\in\set{f,g}$ are chosen to maximize the estimate on the right-hand side of  the first inequality. Thus applying \eqref{goodbounds}
  \[\abs{\Lambda_3(f)-\Lambda_3(g)}\lesssim \eps M^2\]
  whence using \eqref{gcount} we have
    \[\Lambda_3(f)\geq 4M^3c^{(2)}\paren{\delta/4M}-\O{\eps M^2}.\]
   
   So choosing 
   \[\eps = C''4Mc^{(2)}\paren{\delta/4M}\]
   for sufficiently small constant $C'$ gives the result with
   \[c^{(q)}(M,\delta) \gtrsim M^3c^{(2)}\paren{\delta/4M}.\]
  \end{proof}


\begin{proof}[Proof of Lemma \ref{thm:interpolate}]
  %
  
  Fix a cut–off $\lambda\in(0,\|f\|_{L^{1}})$ and a parameter
  \(\eta\in(0,\tfrac12)\) to be chosen later.
  Let \(q\in(2,3)\) (also to be chosen later) and suppose
  \(\|\hat f\|_{\ell^{q}}\le M\).
  
  \medskip
  \noindent
  {\bf Step 1: Bohr-cut decomposition.}
  Put
  \[
    E_{\lambda}\;:=\;\bigl\{\,\xi\in\widehat{\Bbb G}\ :\ |f(\xi)|\ge\lambda\bigr\},
  \]
  and let \(B=B(E_{\lambda},\eta)\) be the Bohr set
  \[
    B(E_{\lambda},\eta)
    \;=\;
    \bigl\{x\in\Bbb G\ :\
          |e^{2\pi i\xi\cdot x}-1|\le\eta
          \text{ for every }\xi\in E_{\lambda}\bigr\}.
  \]
  Write \(\phi:=\frac1{|B|}\,1_{B}\) and set
  \[
    g := \phi*f,
    \qquad
    h := f-g.
  \]
  
  \medskip
  \noindent
  {\bf Step 2.1: Fourier‐norm bounds.}
  As \(|E_{\lambda}|\le\lambda^{-q}\|\hat f\|_{\ell^{q}}^{q}\),
  Lemma 4.4 of \cite{taovu} gives, for every \(p\ge2\),
  \begin{equation}\label{fourierlqbohrbound}
    \|\hat\phi\|_{\ell^{p}}
    \;\lesssim\;
    \eta^{-\lambda^{-q}\|\hat f\|_{\ell^{q}}^{q}/p}
    \;\le\;
    \eta^{-(M/\lambda)^{q}/p}.
  \end{equation}
  
  \noindent
  {\bf Step 2.2: Bound for \(g\).} By H\"older and
  \eqref{fourierlqbohrbound} with \(p=\frac{2}{q-2}\),
  \begin{equation}\label{gbound}
    \|\,\hat g\|_{\ell^{2}}
    \;=\;
    \Bigl(\sum|\hat\phi|^{2}|\hat f|^{2}\Bigr)^{1/2}
    \;\lesssim\;
    M\,\eta^{-(M/\lambda)^{q}(q-2)/2}.
  \end{equation}
  
  \noindent
  {\bf Step 2.3: Bound for \(h\).} Using
  \(|1-\hat\phi(\xi)|\lesssim\eta^{2}\) for \(\xi\in E_{\lambda}\)
  (see Lemma 6.7 of \cite{greenroth}) and \(|\hat\phi|\le1\),
  \[
    \|\,\hat h\|_{\ell^{3}}^{3}
    \;\lesssim\;
    \eta^{6}\!\!\sum_{\xi\in E_{\lambda}}\!|\hat f(\xi)|^{3}
    +\!
    \sum_{\xi\notin E_{\lambda}}\!|\hat f(\xi)|^{3}
    \;\le\;
    \eta^{6}M^{3}+\lambda^{3-q}M^{q}.
  \]
  Hence
  \begin{equation}\label{hbound}
    \|\,\hat h\|_{\ell^{3}}
    \;\lesssim\;
    \eta^{2}M+(\lambda M)^{\frac{3-q}{3}}M^{q/3}.
  \end{equation}
  
  \medskip
  \noindent
  {\bf Step 3: Choose \(\lambda\).}
  Put \(\lambda:=M\eta\).  Then \eqref{gbound}–\eqref{hbound} become
  \begin{align}
    \label{9}
    \|\,\hat g\|_{\ell^{2}}
    &\;\lesssim\;
    \eta^{-\frac{q-2}{2}}\,M,\\[3pt]
    \label{10}
    \|\,\hat h\|_{\ell^{3}}
    &\;\lesssim\;
    \eta^{\frac{3-q}{3}}\,M,
  \end{align}
  where we used \(\eta<1\) and \(q\ge2\) in \eqref{10}.
  
  \medskip
  \noindent
  {\bf Step 4: Choose \(\eta\) and \(q\).}
  Demanding the right-hand side of \eqref{10} \(\le\varepsilon\) forces
  \[
    \eta \;\le\;
    \Bigl(\frac{\varepsilon}{M}\Bigr)^{\frac{3}{3-q}}.
  \]
  Take
  \[
    \eta \;:=\;
    \Bigl(\frac{\varepsilon}{M}\Bigr)^{T},
    \qquad
    T>1.
  \]
  Then \(T\ge\frac{3}{3-q}\), i.e. \(q\le3\bigl(1-\frac{1}{T}\bigr)\).
  
  Next require \eqref{9}\(\le CM\), i.e.
  \(\eta^{-(q-2)/2}\le C\).
  With the chosen \(\eta\) this gives
  \[
    q
    \;\le\;
    2+
    \min_{T>1}
    \Bigl\{
        3\bigl(1-\tfrac{1}{T}\bigr),\;
        \tfrac{2\ln C}{\eta^{-1}\ln(\eta^{-1})}
    \Bigr\}
    \;=\;
    2+
    \min_{T>1}
    \Bigl\{
        3\bigl(1-\tfrac{1}{T}\bigr),\;
        \tfrac{2\ln C}{(\frac{\varepsilon}{M})^{-T}\!
                         \ln\bigl((\frac{\varepsilon}{M})^{-T}\bigr)}
    \Bigr\}.
  \]
  
  Since the right-hand side always exceeds \(2\), we can pick \(q\in(2,3)\)
  satisfying both constraints.  With this, \(q\) and the corresponding
  \(\eta\), inequalities \eqref{9}–\eqref{10} give
  the required bounds on \(\|\hat g\|_{\ell^{2}}\) and
  \(\|\hat h\|_{\ell^{3}}\).
  Observing further that \(g=\phi*f\) is non-negative and
  \(\|g\|_{L^{1}}=\|f\|_{L^{1}}\), while
  \(|\hat h|\le|\hat f|\), all five bullet-point properties are verified.
  \end{proof}
  
%
%
%
%
%
%

 \subsection{3AP counts of $L^2$ functions}\label{s:reglemma}

Define 
\[\norm{f}_{Lip} = \norm{f}_{L^{\infty}} + \sup_{x\neq y} \frac{\abs{f(x)-f(y)}}{\abs{x-y}}\]
and
\[\norm{f}_{U^2} = \paren{ \int \abs{\int f(x)f(x-r)\,dx}^2\,dr}^{1/4}.\]

Given numbers $M$ and $N$, say that $\theta\in\T^d$ is $(M,N)$-irrational if for all $q\in \Z^d$ with $\norm{q}_{\ell^1}\leq M$, $\norm{q\cdot\theta}_{\T}\geq \frac{M}{N}$. An analysis of the proof of Theorem 1.2 of \cite{taogreenregularity} reveals that only $L^2$-bounds on the function $f$ are needed for our purposes, and further that 
the same proof works over $[N]^d$ (provided the availability of an ``inverse theorem'', which is trivially the case for $U^2$),
 so that a slight modification would yield the following. We will, however, give a complete proof (of a slightly more complicated statement, Lemma \ref{thm:reglemma_extended}) below.
\begin{lemma}\label{thm:reglemma}
  Suppose that $f:[N]\ra[0,\infty)$ with $\norm{f}_{L^1}\geq\delta>0$, $f\in L^2$ with $\norm{f}_{L^2}\leq 1$. Let $\mathcal{F}:\N\ra\N$ be an increasing function and let $\eps>0$. Then there exists an $M\in\N$ and a decomposition $f = f_{str}+f_{U^2}+f_{L^2}$ such that
   \[\norm{f_{U^2}}_{U^2}\leq \mathcal{F}(M),\]
   \[\norm{f_{L^2}}_{L^2}\leq \eps,\]
   and
   \[f_{str}(n) = F(n/N,n\bmod q, \theta n)\]
   where
\[
  F:[0,1]\times\Z/q\Z\times\T^d\to[0,1],
\]
$q,d,\|F\|_{Lip} \leq M$, and $\theta\in\T^d$ is $(\mathcal{F}(M),N)$-irrational.
\end{lemma}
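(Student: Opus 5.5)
The argument runs the Green--Tao energy-increment scheme with the $U^2$ inverse theorem replaced by the trivial Fourier statement, and with every $L^\infty$ bound replaced by an $L^2$ bound. The structured parts are conditional expectations onto a nested sequence of finite $\sigma$-algebras (partitions of $[N]$), each generated by finitely many frequencies. Orthogonality of conditional expectations then makes $\norm{\cdot}_{L^2}^2$ the energy, and $\norm{f}_{L^2}\le 1$ bounds the total energy.

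\textbf{Step 1: one increment.} Let $\mathcal{B}$ be a factor of complexity at most $M$, meaning it is generated by the level sets of $n\mapsto(n/N,\,n\bmod q,\,\theta n)$ with resolution $1/M$ in each coordinate. Suppose $\norm{f-\mathbb{E}(f|\mathcal{B})}_{U^2}>\sigma$. Since
\[\norm{g}_{U^2}^4=\sum_\xi\abs{\hat g(\xi)}^4\le \norm{\hat g}_{\ell^\infty}^2\norm{g}_{L^2}^2,\]
and $\norm{f-\mathbb{E}(f|\mathcal{B})}_{L^2}\le 1$, there is a frequency $\xi$ with $\abs{\widehat{(f-\mathbb{E}(f|\mathcal{B}))}(\xi)}\ge\sigma^2$. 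This step uses only $L^2$.

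Adjoin to $\mathcal{B}$ the level sets of $e(\xi n)$ at resolution $\approx\sigma^2$, obtaining a refinement $\mathcal{B}'$. The character $e(\xi n)$ is then approximately $\mathcal{B}'$-measurable, up to $L^\infty$ error $O(\sigma^2)$. It follows that
\[\langle f-\mathbb{E}(f|\mathcal{B}),\,\mathbb{E}(f|\mathcal{B}')-\mathbb{E}(f|\mathcal{B})\rangle\gtrsim \sigma^4,\]
so by Pythagoras $\norm{\mathbb{E}(f|\mathcal{B}')}_2^2\ge\norm{\mathbb{E}(f|\mathcal{B})}_2^2+c\sigma^4$. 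Handling the $O(\sigma^2)$ approximation error requires only Cauchy--Schwarz against $\norm{f}_{L^2}\le 1$, so no $L^\infty$ control is needed anywhere.

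\textbf{Step 2: iteration.} Start from the trivial factor and iterate with $\sigma=\mathcal{F}(M_j)$, where $M_j$ is the current complexity. After at most $O(\sigma^{-4})$ steps the $U^2$ alternative must fail. Making this terminate with constants depending only on $\mathcal{F}$ and $\eps$ requires the usual second layer: pigeonhole over a tower of factors $\mathcal{B}_1\subset\mathcal{B}_2\subset\cdots$ to find consecutive factors with energy gap $\le\eps^2$. Then set
\[f_{str}=\mathbb{E}(f|\mathcal{B}_j),\qquad f_{L^2}=\mathbb{E}(f|\mathcal{B}_{j+1})-\mathbb{E}(f|\mathcal{B}_j),\qquad f_{U^2}=f-\mathbb{E}(f|\mathcal{B}_{j+1}).\]
This gives $\norm{f_{L^2}}_2\le\eps$ and $\norm{f_{U^2}}_{U^2}\le\mathcal{F}(M)$. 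Since $f\ge0$, $f_{str}$ is nonnegative.

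\textbf{Step 3: the structured part as a Lipschitz function.} The piecewise-constant $\mathbb{E}(f|\mathcal{B}_j)$ must be replaced by a function $F(n/N,\,n\bmod q,\,\theta n)$ with $F$ Lipschitz, with $\theta$ made $(\mathcal{F}(M),N)$-irrational. Smoothing uses a partition of unity at scale $1/M$, and the smoothing error is absorbed into $f_{L^2}$. Irrationality is obtained by the standard rank-reduction: if $\theta$ has a small rational relation $\norm{k\cdot\theta}_{\T}<\mathcal{F}(M)/N$, then $k\cdot\theta$ is absorbed into the $n/N$ and $n\bmod q$ coordinates and the dimension $d$ drops. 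This can happen at most $M$ times, so the complexity grows by an amount depending only on $\mathcal{F}$.

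\textbf{Main obstacle.} Two points are delicate. The first is the normalization $F:\cdots\to[0,1]$: with only $\norm{f}_{L^2}\le1$, $\mathbb{E}(f|\mathcal{B})$ can be large on small cells. The plan is to truncate at height $K\approx\eps^{-1}$ and push the truncated part into $f_{L^2}$ (Chebyshev together with the $L^2$ bound controls its norm), then rescale $F$ by $K$, which is a bounded loss absorbed into $M$. The second is the bookkeeping in the irrationality reduction while keeping $\norm{F}_{Lip}\le M$. I expect this rank-reduction step to be the most technical part, although it can be copied essentially verbatim from Green--Tao.
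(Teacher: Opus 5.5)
Your Steps 1--2 are sound, and they take a different route from the paper. The paper's written proof of its extended $(L^2,U^2)$ lemma truncates $f$ and then invokes the bounded $(L^\infty,U^2)$ lemma as a black box. You instead run the energy increment directly in $L^2$, using $\norm{g}_{U^2}^4\le\norm{\hat g}_{\ell^\infty}^2\norm{g}_{L^2}^2$ and the energy cap $\norm{\mathbb{E}(f|\mathcal{B})}_{L^2}^2\le\norm{f}_{L^2}^2\le 1$. This is what actually justifies the paper's remark that only $L^2$ bounds are needed for the uniform and small parts. You should read the $U^2$ bound as $1/\mathcal{F}(M)$: as typed, $\le\mathcal{F}(M)$ is met trivially by $f_{U^2}=f$, since $\norm{f}_{U^2}\le\norm{f}_{L^2}\le 1$.

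The gap is in your ``main obstacle'' paragraph, and it is exactly the paper's Step~1 there. Chebyshev bounds $m(\{f>K\})\le K^{-2}$, but it does not bound $\norm{(f-K)_+}_{L^2}$. Take $f=\delta+A1_S$ with $A^2m(S)=\tfrac12$, which is admissible for $\delta\le\tfrac12$ and $A$ large. Then $\norm{(f-K)_+}_{L^2}\to 2^{-1/2}$ as $A\to\infty$, for every fixed $K$. The same happens to $\mathbb{E}(f|\mathcal{B}_j)$ when $S$ is a cell of $\mathcal{B}_j$, since cells of a high-complexity factor can be far smaller than $K^{-2}$. Pushing the excess over an $\varepsilon$-dependent height into $f_{L^2}$ would require uniform integrability of $f^2$ (say an $L^p$ bound for some $p>2$), which the hypotheses do not give.

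Moreover, no argument can produce $F$ with values in $[0,1]$. ``Rescaling $F$ by $K$'' only yields $f_{str}=K\,F(\cdots)$, which is not the stated form. Work on $\Z_N$ with $q_0=100$ dividing $N$ and normalized counting measure (on $[N]$ only the constants change). Let $P=\{n:q_0\mid n\}$ and $f=\sqrt{q_0}\,1_P$, so $\norm{f}_{L^2}=1$ and $\norm{f}_{L^1}=\tfrac1{10}=:\delta$. Pair any admissible decomposition with $1_P$, using $f_{str}\le 1$, Cauchy--Schwarz for $f_{L^2}$, and H\"older with $\norm{\widehat{1_P}}_{\ell^{4/3}}=q_0^{-1/4}$ for $f_{U^2}$:
\[\tfrac{1}{10}=\langle f,1_P\rangle\le \tfrac{1}{100}+\tfrac{\varepsilon}{10}+10^{-1/2}\norm{f_{U^2}}_{U^2}.\]
With $\varepsilon=\tfrac12$ this forces $\norm{f_{U^2}}_{U^2}\ge 0.04\sqrt{10}>\tfrac18$, which is impossible whenever $\mathcal{F}\ge 8$.

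What your Steps 1--2 can support is the version with $F\ge 0$ and $\norm{F}_{L^\infty}\le O_M(1)$, which the bound $\norm{F}_{Lip}\le M$ already allows. The pointwise bound must then come from the factor, not from truncating $f$. On a cell $C$, Cauchy--Schwarz gives $\mathbb{E}(f|\mathcal{B})\le m(C)^{-1/2}\norm{f}_{L^2}$. So you need $m(C)\gg_M 1$ for every cell, which is what equidistribution coming from the irrationality of $\theta$ provides. Your Step~3 depends on the same point: absorbing the smoothing error near cell boundaries into $f_{L^2}$ tacitly uses an $L^\infty$ bound on $\mathbb{E}(f|\mathcal{B}_j)$.
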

%
%
%
 
 We find it convenient to use Herglotz's Theorem in the proof of Corollary \ref{thm:L2count}.
 
 \begin{definition}
  A continuous function $\psi$ on a locally compact abelian group $G$ is \textbf{positive-definite} if for all finite complex-valued sequences $a_n$
  \[ \sum_{i,j} \psi(i-j)a_i\overline{a_j}\geq 0.\]
 \end{definition}

 \begin{theorem}\label{thm:Herglotz}[Herglotz's Theorem]
  Let $f$ be a positive finite Borel measure on a locally compact abelian group. Then $\hat{f}$ is positive-definite on the Pontryagin dual $\hat{G}$ of $G$.
  Conversely, if $\psi$ is a positive-definite function on a locally compact abelian group, then $\psi$ is the Fourier transform of a positive finite Borel measure on the dual group.
 \end{theorem}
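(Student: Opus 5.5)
We prove both directions of Herglotz's Theorem; the reverse direction carries the content.

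\emph{Forward direction.} Let $f$ be a positive finite Borel measure on $G$, and let $\gamma_1,\dots,\gamma_n\in\widehat{G}$ and $a_1,\dots,a_n\in\C$. Under the convention $\hat f(\gamma)=\int \gamma(x)\,df(x)$ used in the paper, multiplicativity of characters gives $(\gamma_i-\gamma_j)(x)=\gamma_i(x)\overline{\gamma_j(x)}$. Hence
\[\sum_{i,j}\hat f(\gamma_i-\gamma_j)a_i\overline{a_j}=\int \Big|\sum_i a_i\gamma_i(x)\Big|^2\,df(x)\geq 0,\]
by positivity of $f$. Continuity of $\hat f$ on $\widehat{G}$ follows from dominated convergence, since $f$ is finite and characters are bounded by $1$. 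This direction is routine.

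\emph{Converse direction.} Let $\psi$ be a continuous positive-definite function on the locally compact abelian group $H$, playing the role of the dual group above, with Pontryagin dual $\widehat H$. The standard route is the GNS construction followed by Stone's theorem (SNAG).
\begin{enumerate}
\item On the finitely supported functions $a:H\ra\C$, define the sesquilinear form
\[\langle a,b\rangle=\sum_{i,j}\psi(i-j)a_i\overline{b_j}.\]
It is positive semidefinite by hypothesis. Quotienting by null vectors and completing gives a Hilbert space $\mathcal H$.
\item Translations $(\tau_h a)_i=a_{i-h}$ preserve the form, so they induce a unitary representation $U$ of $H$ on $\mathcal H$.
\item Continuity of $\psi$ makes $U$ strongly continuous. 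Indeed, the cyclic vector $e=[\delta_0]$ satisfies
\[\|U_h e-e\|^2=2\psi(0)-2\operatorname{Re}\psi(h),\]
which tends to $0$ as $h\ra 0$. Strong continuity on the dense span of the translates of $e$ then extends to all of $\mathcal H$.
\item Stone's theorem (SNAG) applied to $U$ produces a projection-valued measure $P$ on $\widehat H$ with $U_h=\int \chi(h)\,dP(\chi)$, up to the sign convention for characters.
\item Set $\mu(A)=\langle P(A)e,e\rangle$. This is a positive measure of total mass $\|e\|^2=\psi(0)<\infty$, and
\[\psi(h)=\langle U_h e,e\rangle=\int \chi(h)\,d\mu(\chi).\]
Hence $\psi=\hat\mu$, identifying $H$ with the dual of $\widehat H$ via Pontryagin duality.
\end{enumerate}

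\emph{Alternative for compact $G$.} The paper only applies the result to $G=\T^d$ or $\Z_N^d$, whose duals are $\Z^d$ or $\Z_N^d$. There the converse is much more elementary, and we may prove it directly.
\begin{enumerate}
\item Take $H=\Z^d$ and define Fejér-type averages
\[f_K(x)=\frac{1}{|Q_K|}\sum_{m,n\in Q_K}\psi(m-n)\,\overline{e(m\cdot x)}\,e(n\cdot x),\qquad Q_K=[0,K)^d\cap\Z^d.\]
Each $f_K$ is nonnegative by positive-definiteness applied with $a_m=\overline{e(m\cdot x)}$. It has total mass $\psi(0)$.
\item Its Fourier coefficients are $\psi(k)\prod_{i}(1-|k_i|/K)_+$.
\item Weak-$*$ compactness of the bounded positive measures on $\T^d$ (Banach–Alaoglu/Riesz) yields a limit measure $\mu\geq 0$. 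Convergence of the Fourier coefficients then identifies $\hat\mu=\psi$.
\end{enumerate}
In the finite case $\Z_N^d$, no limit is needed: positivity of the Fourier inverse follows directly by testing against characters.

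The main obstacle is the converse direction. In the general locally compact setting it amounts to invoking the spectral theorem (SNAG), and the checks that need care are the strong continuity of $U$ and the matching of sign conventions. I would present the general statement via citation (e.g. Rudin, \emph{Fourier Analysis on Groups}, §1.4.3) and give the Fejér argument explicitly for the groups actually used.
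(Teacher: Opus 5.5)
The paper gives no proof of this statement. It quotes Herglotz's theorem (the $\mathbb{Z}^d$ case of Bochner's theorem) as a classical black box and uses it only in the proof of the $L^2$ counting corollary, in the settings $G=\mathbb{T}^d$ and $G=\mathbb{Z}_p^d$. So your proposal supplies a missing argument rather than taking a different route, and it is correct.

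\begin{itemize}
\item The forward computation is exactly right for the paper's convention $\widehat{f}(\gamma)=\int\gamma\,df$.
\item The GNS construction followed by SNAG is the standard proof of Bochner's theorem on a general LCA group.
\item The Fej\'er-average argument is the classical proof on $\mathbb{Z}^d$: $f_K\ge 0$, mass $\psi(0)$, coefficients $\psi(k)\prod_i(1-|k_i|/K)_+$, then a weak-$*$ limit. The finite case is immediate.
\end{itemize}

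Citing keeps the paper short. Your elementary argument makes the result self-contained in precisely the settings the paper uses, with no spectral theory.

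A few small points to tighten:
\begin{itemize}
\item In step 3 you use $\psi(-h)=\overline{\psi(h)}$. Derive it from positive-definiteness applied to the two points $0,h$: a complex matrix $A$ with $a^*Aa\ge 0$ for all $a$ is Hermitian.
\item For a general LCA group, argue continuity of $\widehat{f}$ via inner regularity of $f$ and uniform convergence of characters on compact sets. Convergence in $\widehat{G}$ is along nets, so dominated convergence does not apply directly.
\item The SNAG route is non-circular only if SNAG is proved from the spectral theorem for the commutative algebra $L^1(H)$. Some texts instead deduce SNAG from Bochner's theorem.
\item The paper's definition writes $\psi(i-j)$ with bare indices. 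Your reading, with arbitrary group elements $x_i,x_j$, is the intended one.
\end{itemize}
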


  \begin{proof}[Proof of Corollary \ref{thm:L2count}]
   We deal with the case that $X=[N]^d$ first. Let $\eps>0$. Use Lemma \ref{thm:reglemma} 
   to write $f = f_{str}+f_{U^2}+f_{L^2}$, where
   \[\norm{f_{U^2}}_{U^2}\leq \mathcal{F}(M),\]
   \[\norm{f_{L^2}}_{L^2}\leq \eps,\]
   and
   \[f_{str}(n) = F(n/N,n\bmod q, \theta n)\]
   where
\[
  F:[0,1]\times\Z/q\Z\times\T^d\to[0,1],
\]
$q,d,\|F\|_{Lip} \leq M$, and $\theta\in\T^d$ is $(\mathcal{F}(M),N)$-irrational.

 Then by following exactly the proof in \cite{taogreenregularity} of their Theorem 6.1 (Szemeredi's Theorem), replacing their use of the Regularity Lemma for 1-bounded functions by Lemma \ref{thm:reglemma}, one obtains the claim. Now if instead $X=[0,1]^d$, we embed $[0,1]^d\subset [-1,2]^d\hookrightarrow \T^d$, noting that this embedding preserves the $3$AP count (abusing notation, we continue to write $f$ for the resulting function defined on $\T^d$).
 
 Let $\eps>0$. Let $N$ be such that 
 \[\paren{\sum_{\xi\in\Z^d, \abs{\xi}>N/100} \abs{\hat{f}(\xi)}}^{\half}\leq \eps\]
 and write $f_N = \phi_N\ast f$ were $\phi$ is a mollifier satisfying $\widehat{\phi_N}|_{B(0,N/2)}\equiv 1$, $\supp \widehat{\phi_N}\subset B(0,N)$, so that $\norm{f-f_N}_{L^2}\leq\eps$. 
 We have that 
 \[\abs{\Lambda_3(f)-\Lambda_3(f_N)}\lesssim\eps^3\] 
 since
 \eqn{ 
 \abs{\Lambda_3(f)-\Lambda_3(f_N)} = \abs{\sum_{\xi\in\Z^d}\hat{f}(\xi)^2\hat{f}(-2\xi)-\hat{f_N}(\xi)^2\hat{f_N}(-2\xi)}
 \\\leq&
  \abs{\sum_{\abs{\xi}\leq N/2}\hat{f}(\xi)^2\hat{f}(-2\xi)-\hat{f_N}(\xi)^d\hat{f_N}(-2\xi)} + \abs{\sum_{\abs{\xi}> N/2}\hat{f}(\xi)^2\hat{f}(-2\xi)-\hat{f_N}(\xi)^d\hat{f_N}(-2\xi)}
  \\\leq&
  0 + 2\abs{\sum_{\abs{\xi}> N/2}\hat{f}(\xi)^2\hat{f}(-2\xi)}\lesssim\norm{\hat{f}}_{\ell^3(\abs{\xi}>N/2)}^3\leq \norm{\hat{f}}_{\ell^2(\abs{\xi}>N/2)}^3\leq\eps^3.
 }
 
 Let $p\in (10N,20N)$ be a prime and $g$ denote the function on $\paren{p^{-1}\Z_{p}}^d$ given by 
 \[g(x) = \sum_{\xi\in B(0,N)\subset\paren{\Z/p\Z}^d}\hat{f_N}(\xi)e^{2\pi i \xi\cdot x}.\]
 If we embed $\paren{p^{-1}\Z_p}^d$ within $\T^d$ in the natural way, $g(x) = f_N(x)$. This identification may be sidestepped, however, note that
 \begin{enumerate}
  \item $\hat{g}$ is positive-definite since by Herglotz's Theorem $\hat{f_N}$ is,
  \item thus $g\geq 0$ again by Herglotz's Theorem,
  \item $\norm{g}_{L^1} = \hat{g}(0)=\hat{f_N}(0)=\norm{f}_{L^1}$, 
  \item $\norm{g}_{L^2}=\norm{f_N}_{L^2}\leq\norm{f}_{L^2}\leq 1$ by Plancherel's theorem,
  \item $\Lambda_3(g)=\Lambda_3(f_N)$ by the identity
 \[\Lambda_3(g) = \sum_{\xi}\hat{g}(\xi)^2\hat{g}(-2\xi)\]
 applied first to $g$, then to $f_N$, and using that $\hat{g}=\hat{f_N}$ under the identification
 \[\Z_p = \set{-\frac{p-1}{2},\dots,0,\dots,\frac{p-1}{2}}\subset \Z.\]
 \end{enumerate}
 
 By the conclusion of this lemma over $[p]^d$, we therefore obtain that
 \[\Lambda_3(f)+O(\eps^3)=\Lambda_3(f_N)=\Lambda_3(g)\geq c^{(2)}\paren{\norm{f}_{L^1}}.\]
  \end{proof}

 \subsubsection{Regularity Lemmas}

First, we'll state a standard \((L^{\infty},U^{2})\) regularity lemma.

 \begin{lemma}[Abelian \((L^{\infty},U^{2})\) regularity lemma {\cite[Thm.\;5]{Eberhard16}}]
\label{lem:bounded-reg}
Let $f:[N]^{d}\to[0,1]$ and let $F:\mathbb N\to\mathbb N$ be any growth function.
For every $\varepsilon>0$ there exists $M\ll_{\,\varepsilon,F} 1$ and a decomposition
\[
f \;=\; f_{\mathrm{str}} \;+\; f_{\mathrm{sml}} \;+\; f_{\mathrm{unf}}
\]
with
\begin{enumerate}
\item \(f_{\mathrm{str}}(n)=
      F\bigl(n/N,\;n\bmod q,\;\theta n\bigr)\) where  
      \(F:[0,1]\times\mathbb Z/q\mathbb Z\times\mathbb T^{d}\to[0,1]\),  
      \(q,d,\lVert F\rVert_{\mathrm{Lip}}\le M\), and
      \(\theta\in\mathbb T^{d}\);
\item \(\lVert f_{\mathrm{sml}}\rVert_{L^{2}([N]^{d})}\le\varepsilon\);
\item \(\lVert f_{\mathrm{unf}}\rVert_{U^{2}([N]^{d})}\le 1/F(M)\);
\item \(f_{\mathrm{str}},\,f_{\mathrm{str}}+f_{\mathrm{sml}}\in[0,1]\).
\end{enumerate}
\end{lemma}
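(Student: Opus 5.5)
The plan is to run the energy-increment (iterative refinement) argument of Green--Tao. The factors will be generated by Bohr sets in $[N]^d$, i.e.\ by level sets of finitely many characters. The growth function $F$ will dictate how much finer each new factor must be compared to the previous one.

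\textbf{Iteration.} Start with the trivial factor $\mathcal{B}_0$. At stage $i$ we have a factor $\mathcal{B}_i$ generated by at most $M_i$ frequencies $\xi_1,\dots,\xi_{M_i}$. Set $f_i := \mathbb{E}(f\mid\mathcal{B}_i)$, and in practice take $f_i = f*\phi_i$ with $\phi_i$ the normalized indicator of a regular Bohr set $B(\{\xi_j\},\eta_i)$. Consider the three pieces $f_i$, $f_{i+1}-f_i$, and $f - f_{i+1}$.
\begin{itemize}
\item If $\norm{f-f_{i+1}}_{U^2}\le 1/F(M_i)$, we stop. The pieces are $f_{\mathrm{str}}=f_i$, $f_{\mathrm{sml}}=f_{i+1}-f_i$ and $f_{\mathrm{unf}}=f-f_{i+1}$.
\item Otherwise, apply the trivial $U^2$ inverse theorem. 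Since $\norm{f-f_{i+1}}_{U^2}^4\le \norm{\widehat{f-f_{i+1}}}_{\ell^\infty}^2\norm{f-f_{i+1}}_{L^2}^2$, there is a frequency $\xi$ with $|\widehat{(f-f_{i+1})}(\xi)|\ge F(M_i)^{-2}$. Adjoin $\xi$ to the factor.
\end{itemize}

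\textbf{Energy increment.} Pythagoras for conditional expectations, $\norm{f_{i+1}}_2^2=\norm{f_i}_2^2+\norm{f_{i+1}-f_i}_2^2$, shows that each refinement raises the energy $\norm{f_i}_{L^2}^2\le 1$ by at least $F(M_i)^{-4}$. So only boundedly many non-stopping steps occur. In addition, we need $\norm{f_{i+1}-f_i}_{L^2}\le\varepsilon$ at the stopping step. We get this by pigeonholing among $O(\varepsilon^{-2})$ consecutive refinements: the energy increments sum to at most $1$, so some consecutive pair differs by at most $\varepsilon$ in $L^2$.

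This is the standard double-layer iteration: an outer energy index for the $\varepsilon$-pigeonhole, and an inner $F$-driven refinement. The final complexity is $M\ll_{\varepsilon,F}1$, a tower-type function of $\varepsilon$ and $F$.

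\textbf{Structured form and ranges.} The value of $f_i=f*\phi_i$ depends only on the phases $(\xi_j\cdot n)_j$ up to an error controlled by $\eta_i$. Writing $\theta=(\xi_1/N,\dots,\xi_{M_i}/N)\in\T^{M_i}$ therefore expresses $f_i(n)$ as $F(\theta n)$ with $F$ Lipschitz of norm $\lesssim_{\eta_i,M_i}1$. The $[0,1]$-ranges in (4) come from positivity and averaging. Since $\phi_i\ge0$ has mass $1$ and $0\le f\le 1$, both $f_i$ and $f_{i+1}$ take values in $[0,1]$, and $f_{\mathrm{str}}+f_{\mathrm{sml}}=f_{i+1}$. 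The coordinates $n/N$ and $n\bmod q$ are unnecessary here and can be taken trivial ($q=1$).

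\textbf{Main obstacle.} Passing from the sharp Bohr-set average to a genuinely Lipschitz $F$ while keeping $\norm{F}_{\mathrm{Lip}}\le M$ is the delicate step. The plan is to replace $1_B$ by a smooth bump of the phases $\psi(\theta n)$, with $\psi$ Lipschitz on $\T^{M_i}$. The resulting discrepancy with the exact conditional expectation must then be absorbed into $f_{\mathrm{sml}}$, which requires choosing the radii $\eta_i$ along the iteration so that this error is at most $\varepsilon/2$. With $\eta_i$ fixed that way, the Lipschitz constant is bounded by a function of $\eta_i$ and $M_i$, which we absorb into $M$. Alternatively, since the statement is cited as Theorem~5 of \cite{Eberhard16}, one may simply import it, noting that the proof there is written for a general finite abelian group and so applies to $[N]^d$ viewed as $\Z_N^d$.
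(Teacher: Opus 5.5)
Your closing fallback (import Theorem~5 of Eberhard on $\Z_N^d$, with $q=1$ and no dependence on $n/N$) is all the paper does: it states the lemma with a citation and gives no proof. The self-contained energy-increment argument you put in front of it, however, has a genuine gap, because it runs on two incompatible objects. The Pythagoras identity $\|f_{i+1}\|_2^2=\|f_i\|_2^2+\|f_{i+1}-f_i\|_2^2$ drives both the increment and the $\varepsilon$-pigeonhole. It holds for conditional expectations onto nested $\sigma$-algebras, not for the Bohr averages $f_i=f*\phi_i$ you use ``in practice''. The map $f\mapsto f*\phi_i$ is the Fourier multiplier $\widehat{\phi_i}$, which is not a projection and is not pointwise monotone in $i$ for nested Bohr sets. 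So $\|f*\phi_{i+1}\|_2^2-\|f*\phi_i\|_2^2=\sum_\xi|\widehat{f}(\xi)|^2\bigl(|\widehat{\phi_{i+1}}(\xi)|^2-|\widehat{\phi_i}(\xi)|^2\bigr)$ need not be nonnegative, let alone at least $\|f*\phi_{i+1}-f*\phi_i\|_2^2$.

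If instead $f_i$ really is $\mathbb{E}(f\mid\mathcal{B}_i)$, with $\mathcal{B}_i$ generated by the level sets of $n\mapsto e(\xi_j\cdot n/N)$, the construction degenerates. The atoms are the cosets of the common kernel $\Lambda=\{n:\xi_j\cdot n=0\ \forall j\}$, so $\mathbb{E}(f\mid\mathcal{B}_i)$ is the Fourier projection onto $\Lambda^{\perp}=\langle\xi_1,\dots,\xi_{M_i}\rangle$. For $d=1$ and $N$ prime, a single nonzero $\xi$ already gives $\mathbb{E}(f\mid\mathcal{B}_1)=f$. Such a function of the phases can oscillate at scale $1/N$, and its $L^2$-distance to any smooth-bump average of $f$ is typically of order $1$. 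The exact conditional expectation does not involve $\eta_i$ at all, so no choice of radii brings the discrepancy you want to absorb into $f_{\mathrm{sml}}$ below $\varepsilon/2$. So that step fails, and the obstacle you flagged is the wrong one. In the Bohr-average picture, $f*\phi_i$ is already exactly constant on cosets of $\Lambda$. By regularity of the Bohr set it is also Lipschitz in the phases with constant $O(M_i/\eta_i)$. What actually breaks is the increment bookkeeping.

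The missing idea is to make the factors coarse. In the Green--Tao argument, $\mathcal{B}_i$ is generated by pulling back, under $n\mapsto\xi_j\cdot n/N$, a partition of $\T$ into intervals of length $\eta$ with a random offset. This buys three things:
\begin{itemize}
\item Each adjoined character is within $O(\eta)$ of a $\mathcal{B}$-measurable function. So $|\widehat{(f-f_{i+1})}(\xi)|\ge F(M_i)^{-2}$ still yields an energy gain $\gtrsim F(M_i)^{-4}$ once $\eta\ll F(M_i)^{-2}$, with honest Pythagoras.
\item The factor has only $O(\eta^{-1})^{M_i}$ atoms.
\item Replacing the cellwise-constant $\mathbb{E}(f\mid\mathcal{B}_i)$ by a $[0,1]$-valued Lipschitz function of the phases changes it only where some phase lies within $\eta'$ of a cell boundary. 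On average over the offsets this set has density $O(M_i\eta'/\eta)$.
\end{itemize}
That smoothing error, together with $f_{i+1}-f_i$, is what legitimately goes into $f_{\mathrm{sml}}$, and it keeps $f_{\mathrm{str}}+f_{\mathrm{sml}}=f_{i+1}\in[0,1]$.

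Alternatively, you can keep Bohr averages, but with the positive-definite kernels $\lambda_i=\phi_i*\phi_i$. For these $\|g\|_2^2-\|g*\lambda_i\|_2^2\ge\|g-g*\lambda_i\|_2^2$, because $0\le\widehat{\lambda_i}\le1$. Combine this with regularity ($\lambda_i*\lambda_{i+1}\approx\lambda_i$ in $L^1$ when $B_{i+1}$ is much narrower) to get an approximate Pythagoras whose errors are summable.
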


Extending this to the $(L^{2}, U^{2})$ arithmetic regularity on $\,[N]^{d}$ form we need is straightforward:
(Below we'll treat $N$ as prime, embedding $[N]$ into $[N']$ for some prime $N'$ if necessary, at the cost of some constants which we'll suppress.)
\begin{lemma}
\label{thm:reglemma_extended}
Fix an integer $d\ge 1$. Let 
\[
f:[N]^{d}\longrightarrow[0,\infty)
\qquad\text{with}\qquad 
\|f\|_{L^{1}([N]^{d})}\;\ge\;\delta
\quad\text{and}\quad
\|f\|_{L^{2}([N]^{d})}\;\le\;1 .
\]

For every growth function $\mathcal F:\N\to\N$ and every $\varepsilon>0$
there exists an integer
\[
M=M(d,\delta,\varepsilon,\mathcal F)\;\ll_{\delta,\varepsilon,\mathcal F,d}\!1
\]
and a decomposition
\[
f
\;=\;
f_{\mathrm{str}}
\;+\;
f_{U^{2}}
\;+\;
f_{L^{2}}
\]
with the following properties:

\begin{enumerate}
\item  (\emph{structured part})  
       there are integers $q,d'\le M$, a vector $\theta\in\T^{d'}$ that is
       \(\bigl(\mathcal F(M),N\bigr)\)-irrational, and a Lipschitz function
       \[
         F:[0,1]\times\Z/q\Z\times\T^{d'}\;\longrightarrow\;[0,1],
         \qquad \|F\|_{\mathrm{Lip}}\le M ,
       \]
       such that for every $n\in[N]^{d}$
       \[
         f_{\mathrm{str}}(n)
         \;=\;
         F\!\bigl(n/N,\;n\bmod q,\;\theta\!\cdot n\bigr);
       \]
\item  (\emph{$U^{2}$–uniform part})
       \[
         \bigl\|f_{U^{2}}\bigr\|_{U^{2}([N]^{d})}
         \;\le\;
         \frac{1}{\mathcal F(M)} ;
       \]
\item  (\emph{$L^{2}$-small part})
       \[
         \bigl\|f_{L^{2}}\bigr\|_{L^{2}([N]^{d})}
         \;\le\;
         \varepsilon ;
       \]
\item  Pointwise bounds:
       \(
       f_{\mathrm{str}}(n),\;
       f_{\mathrm{str}}(n)+f_{L^{2}}(n)\in[0,1]
       \) for all $n\in[N]^{d}$.
\end{enumerate}
\end{lemma}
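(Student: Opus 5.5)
We reduce to the bounded regularity lemma, Lemma \ref{lem:bounded-reg}, by truncating $f$ at a large height. Fix $\varepsilon>0$ and $\mathcal F$. Since $\|f\|_{L^2}\le 1$, Chebyshev and Cauchy--Schwarz control the part of $f$ above any level $K$:
\[
\bigl\|f\,1_{\{f>K\}}\bigr\|_{L^2}^2\le \frac{1}{K^2}\int f^4\wedge\cdots
\]
That crude bound fails, since only $L^2$ control is available. Instead we choose the truncation level by pigeonholing over dyadic levels. The quantities $a_j:=\|f1_{\{2^j<f\le 2^{j+1}\}}\|_{L^2}^2$ satisfy $\sum_j a_j\le 1$. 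However, $\|f1_{\{f>K\}}\|_{L^2}$ need not be small for a $K$ depending only on $\varepsilon$ and $\delta$, uniformly in $f$; an $L^2$-normalized spike is a counterexample. So the truncation cannot be absorbed into $f_{L^2}$ uniformly.

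We therefore absorb the tail into the uniform part. Set $K$ large, to be chosen, and write
\[
f=f^{\le K}+f^{>K},\qquad f^{\le K}:=\min(f,K).
\]
Apply Lemma \ref{lem:bounded-reg} to $f^{\le K}/K$, which takes values in $[0,1]$, with growth function $\mathcal F'$ and accuracy $\varepsilon/K$. Then rescale by $K$, which produces $K\!\cdot\!$structured$+$small$+$uniform pieces.

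It remains to place $f^{>K}$. We decompose it using Fourier analysis on $\Z_N^d$. Let $S=\{\xi:|\widehat{f^{>K}}(\xi)|\ge\tau\}$. Since $\|\widehat{f^{>K}}\|_{\ell^2}\le 1$, we have $|S|\le\tau^{-2}$. The projection of $f^{>K}$ onto the frequencies outside $S$ has $U^2$ norm at most
\[
\Bigl(\sup_{\xi\notin S}|\hat{\cdot}|^2\,\|\hat\cdot\|_{\ell^2}^2\Bigr)^{1/4}\le\tau^{1/2},
\]
and this piece joins $f_{U^2}$. The projection onto $S$ is a trigonometric polynomial with at most $\tau^{-2}$ frequencies. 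Writing $\theta$ for the vector of these frequencies divided by $N$, this piece has the form $G(\theta\cdot n)$ with $G$ Lipschitz of norm $\lesssim\tau^{-2}$. We then merge it into the structured part by enlarging $d'$, after running the standard irrationality-upgrade iteration of Green--Tao, which passes to rational/irrational components and bounds the number of steps by a $\tau$-dependent tower.

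The main obstacle is item (4): the structured part must take values in $[0,1]$, and $f_{\rm str}+f_{L^2}$ must be $[0,1]$-valued. This cannot hold literally for unbounded $f$, because $f_{U^2}$ would then have to carry all of the tall part of $f$. I would try to satisfy (4) by putting the entire tail $f^{>K}$, minus a bounded structured correction, into $f_{U^2}$. This requires showing that $\|f^{>K}\|_{U^2}$ is small for large $K$ once the large Fourier coefficients are removed, and that is exactly the $\tau$-argument above. The cost is that we renormalize $F$ to take values in $[0,1]$ and accept a factor $K\lesssim_{\varepsilon,\delta}1$ in $\|F\|_{\rm Lip}$, absorbed into $M$. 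Finally, we choose $\tau$ and $\mathcal F'$ depending on $\mathcal F$ so that the combined $U^2$ error is at most $1/\mathcal F(M)$. This is possible because $M$ is determined before $\tau$ by iterating in the usual energy-increment fashion, whose energy is bounded by $\|f\|_{L^2}^2\le1$.
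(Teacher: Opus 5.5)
There is a genuine gap at item (4), and it cannot be closed, because the statement as written is false. Your final move, ``renormalize $F$ to take values in $[0,1]$ and accept a factor $K$ in $\|F\|_{\mathrm{Lip}}$'', is not legitimate: dividing $F$ by $K$ changes $f_{\mathrm{str}}$, so the three pieces no longer sum to $f$. Likewise, your rescaled bounded part ($K$ times a $[0,1]$-valued structured function) is $[0,K]$-valued. The projection of $f^{>K}$ onto the large spectrum $S$ need not be nonnegative, so merging it into $f_{\mathrm{str}}$ violates $F\ge 0$, let alone $F\le 1$.

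More fundamentally, items (2) and (4) are incompatible for $L^2$-bounded $f$. On $\mathbb{Z}_N$ with $N\ge 3$ (for $d>1$ let everything depend on $n_1$ only), put $\psi(n)=1+\cos(2\pi n/N)$ and $f=\sqrt{2/3}\,\psi$. Then $f\ge 0$, $\|f\|_{L^2}=1$ and $\|f\|_{L^1}=\sqrt{2/3}$. Suppose $h:=f_{\mathrm{str}}+f_{L^2}$ takes values in $[0,1]$. Then $f_{U^2}=f-h$, and since $\widehat{\psi}$ is supported on $\{0,\pm 1\}$ with $\|\widehat{\psi}\|_{\ell^1}=2$,
\[
\sqrt{3/2}-1\;\le\;\mathbb{E}\,(f-h)\psi\;\le\;\|\widehat{f-h}\|_{\ell^\infty}\,\|\widehat{\psi}\|_{\ell^1}\;\le\;2\,\|f_{U^2}\|_{U^2}.
\]
So $\|f_{U^2}\|_{U^2}>1/9$, contradicting (2) for, say, $\mathcal{F}(m)=m+9$. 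Your instinct that (4) cannot hold literally is correct. It already fails for this bounded $f$, so the issue is not unboundedness (thin spikes are $U^2$-small) but mass above height $1$ on structured sets.

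Your spike observation is exactly where the paper's own proof breaks. It truncates at $K=3/\varepsilon$ and places $(f-K)_+$ into $f_{L^2}$ using $\|f_{>}\|_{2}^{2}\le K^{2}m_X(f>K)\le\varepsilon^2/3$. That fails for your spike, and even as written $K^2\cdot\varepsilon^2/9=1$, not $\varepsilon^2/3$. It also applies the bounded lemma to $\min(f,K)$ without rescaling. With the unbounded tail inside $f_{L^2}$, item (4) fails outright.

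What is true is the version with $[0,1]$ replaced by $[0,K]$ for some $K\ll_{\varepsilon,\mathcal{F}}1$. For that, your spectral projection and irrationality upgrade on the tail are unnecessary, because the tail is already $U^2$-small. Indeed $\|g\|_{U^2}^4\le\|\widehat{g}\|_{\ell^\infty}^2\|\widehat{g}\|_{\ell^2}^2\le\|g\|_{L^1}^2\|g\|_{L^2}^2$, together with $\|(f-K)_+\|_{L^1}\le\|f\|_{L^2}^2/K$, gives $\|(f-K)_+\|_{U^2}\le K^{-1/2}$.

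The circularity between $K$ and $M$, which your last sentence waves at, is resolved by pigeonholing over truncation levels rather than by an energy increment. Take $K_1<\cdots<K_J$ with $J\approx 4\varepsilon^{-2}$. Choose each $K_{j+1}$, after $K_j$, so large that $K_{j+1}^{-1/2}$ is at most $1/(2\mathcal{F})$ evaluated at the uniform complexity bound the bounded lemma produces at level $K_j$. That lemma is applied to $\min(f,K_j)/K_j$ with accuracy $\varepsilon/(2K_j)$ and a growth function adjusted for the factor $K_j$. The slabs $\Delta_j=\min(f,K_{j+1})-\min(f,K_j)$ satisfy $\sum_j\|\Delta_j\|_{L^2}^2\le\|f\|_{L^2}^2\le 1$, so some $\|\Delta_j\|_{L^2}\le\varepsilon/2$. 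Put $\Delta_j$ into $f_{L^2}$ and $(f-K_{j+1})_+$ into $f_{U^2}$. This gives $f_{\mathrm{str}}\in[0,K_j]$ and $f_{\mathrm{str}}+f_{L^2}\in[0,K_{j+1}]$. The irrationality of $\theta$ must then come from the Green--Tao form of the bounded lemma, since the version quoted in the paper does not assert it.

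For the paper's actual application, the $L^2$ counting corollary (normalized to $\|f\|_{L^2}\le 1$), even this is unnecessary. By positivity $\Lambda_3(f)\ge\Lambda_3(\min(f,2/\delta))$, and $\min(f,2/\delta)$ has mass at least $\delta/2$, so bounded Varnavides applies directly.
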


\begin{proof}[Proof of Lemma~\ref{thm:reglemma_extended}]
Throughout write $X=[N]^{d}$ with normalised counting measure
$m_{X}(A)=|A|/N^{d}$ and Fourier characters
$e_{N}(x\!\cdot\!\xi):=e^{2\pi i x\cdot\xi/N}$,
$\xi\in\Z_{N}^{d}$.

\medskip
\noindent
\textbf{Step 1.  Truncation.}
Fix $\varepsilon>0$ and a growth function
$\mathcal F_0\colon\N\to\N$.  Choose a cut-off
\[
K:=K(\varepsilon):=\frac{3}{\varepsilon}\,,
\]
and decompose $f$ into
\[
f_{\le}(x):=\min\bigl(f(x),K\bigr),\qquad
f_{>}(x):=f(x)-f_{\le}(x).
\]
By Chebyshev,
\(
m_{X}(f>K)\le \|f\|_{2}^{2}/K^{2}\le\varepsilon^{2}/9,
\)
hence
\[
\|f_{>}\|_{2}^{2}\le K^{2}\,m_{X}(f>K)\le\varepsilon^{2}/3
\quad\Longrightarrow\quad
\|f_{>}\|_{2}\le\varepsilon/\sqrt{3}.
\tag{$\dagger$}
\]

\medskip
\noindent
\textbf{Step 2.  Apply the bounded regularity lemma.}
The function $f_{\le}$ satisfies
$0\le f_{\le}\le K$ and $\|f_{\le}\|_{1}\ge\delta$.
Invoke the \((L^{\infty},U^{2})\) result, Lemma \ref{thm:reglemma_extended} 
with bound $M:=K$ and error parameter $\varepsilon/3$:
there is an
\[
M_{0}=M_{0}(\varepsilon,\mathcal F_0,K,\delta)\in\N
\]
and a decomposition
\[
f_{\le}=g_{\mathrm{str}}+g_{U^{2}}+g_{L^{2}}
\]
such that
\begin{align}
&\|g_{U^{2}}\|_{U^{2}}\;\le\;\mathcal F_0(M_{0}),\tag{2.1}\\
&\|g_{L^{2}}\|_{2}\;\le\;\varepsilon/3,\tag{2.2}\\
&g_{\mathrm{str}}(n)=
  F\!\bigl(n/N,\;n\bmod q,\;\theta n\bigr)
     \quad
  \text{with } q,d,\|F\|_{\mathrm{Lip}}\le M_{0},\;
  \theta\text{ $(\mathcal F_0(M_{0}),N)$-irrational.}\tag{2.3}
\end{align}

\medskip
\noindent
\textbf{Step 3.  Assemble the final decomposition.}
Put
\[
f_{\mathrm{str}}:=g_{\mathrm{str}},\qquad
f_{U^{2}}:=g_{U^{2}},\qquad
f_{L^{2}}:=g_{L^{2}}+f_{>}.
\]
Then clearly $f=f_{\mathrm{str}}+f_{U^{2}}+f_{L^{2}}$.
The structural description (2.3) furnishes the desired
complexity bounds with
\(
M:=M_{0}.
\)

\medskip
\noindent
\textbf{Step 4.  Norm estimates.}
The $U^{2}$ bound is exactly (2.1).  For the $L^{2}$ tail,
combine (2.2) with $(\dagger)$:
\[
\|f_{L^{2}}\|_{2}\;\le\;\|g_{L^{2}}\|_{2}+\|f_{>}\|_{2}
\;\le\;\varepsilon/3+\varepsilon/\sqrt{3}
\;<\;\varepsilon.
\]

\medskip
\noindent
\textbf{Step 5.  Conclusion.}
All requirements in Lemma~\ref{thm:reglemma} are met with
\[
M\;=\;M_{0}\bigl(\varepsilon,\mathcal F_0,\tfrac{3}{\varepsilon},\delta\bigr).
\qedhere
\]
\end{proof}



\renewcommand{\su}[2]{\oldsu{#1}{#2}}



\section{Fractal results}\label{s:fractalresults}

\renewcommand{\si}[1]{\sigma\paren{#1}}
\newcommand{\Si}[1]{\varsigma\paren{#1}}
\renewcommand{\di}[1]{\delta\paren{#1}}

\renewcommand{\su}[2]{\int #2\,d#1}
\renewcommand{\ro}{\rho}
%

Let $\mu$ be a Radon measure on $\T^d$ or on $[0,1]^d$. Whenever this integral exists, define 
 \[\si{\mu}(\ro):= \int \int \widehat{\mu}(2\eta)\widehat{\mu}(\eta-\ro\theta)\overline{\widehat{\mu}(\eta+\ro\theta)}\,d\sigma(\theta)\,d\eta,\]
where the $\sigma(\theta)$ inside the integral is the Lebesgue measure on the unit sphere. Also, define
 \[\Si{\mu}(\rho) =  \int \int \abs{\widehat{\mu}(2\eta)\widehat{\mu}(\eta-\ro\theta)\overline{\widehat{\mu}(\eta+\ro\theta)}}\,d\sigma(\theta)\,d\eta,\]
 and
$$\Lambda_3(f):= \int \int f(x)f(x-r)f(x-2r)dxdr,$$
when these integrals exist. In what follows, we will freely use $(\phi_n)_{n\in \mathbb N}$ as an approximate identity with $\supp \widehat{\phi_n}\subset B(0,2^n).$
Finally, let $\di{\mu}$ be the measure on $\R$ defined as
\[\int g(r)\,d\di{\mu}(r) := \lim_{n\ra\infty} \int g(\abs{u})\prod_{i\in[3]}\phi_n\ast\mu(x-iu)\,dx\,du\]
 provided that the right hand side above defines a continuous linear functional on $C(\R)$ (as it does, e.g., if $\mu$ has a density function $f\in C(\R^d)$).

 We note that the measures on $[0,1]^d$ may be identified with (a subset of) those on $\T^d$ via the embedding given by first mapping $[0,1]^d$ to the unit cube in $\R^d$, followed by identifying the cube $[-2,2]^d$ with $\T^d$ in the natural way. This identification preserves the energies $I_{\alpha}(\mu)$ and the $L^p$ norms $\norm{\widehat{\mu}}_{L^p}$ up to multiplicative constants, and also preserves the $3$AP counts as measured by $\Lambda_3(\mu)$ and the (up to rescaling) set of lengths of $3$APs as measured by $\di{\mu}$ when the definition of $\di{\mu}$ is extended to $\T^d$ in the obvious way. In this section we will perform this identification without further comment.

\subsection{Proof of Theorem \ref{thm:MAIN}}
The proof of Theorem \ref{thm:MAIN} follows from Lemma \ref{thm:distancemeasuresupport} (Support), Lemma \ref{thm:distancemeasuremass} (Mass), and Lemma \ref{thm:distancemeasureLInftyBound} (Non-singularity), below.

\subsubsection{Support Lemma}

 We state the following without proof.
 \begin{lemma}\label{thm:distancemeasuresupport}[Support lemma.]
 Let $\mu$ be a probability measure on $\T^d$ and suppose that $\di{\mu}$ exists and has finite total mass.
 Then
  \[\supp(\di{\mu})\subset \set{|u| : x, x+u, x+2u \in \supp(\mu)}.\]
 \end{lemma}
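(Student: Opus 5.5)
We argue by contradiction: we show that any $r_0\ge 0$ outside the closed set
\[K:=\set{|u| : x, x+u, x+2u \in \supp(\mu)}\]
has a neighbourhood that $\di{\mu}$ does not charge. First we check that $K$ is closed. $\supp\mu$ is compact in $\T^d$, so $A=\set{(x,u): x,x+u,x+2u\in\supp\mu}$ is compact, being the preimage of a compact set under a continuous map. Hence $K$, the image of $A$ under the continuous map $(x,u)\mapsto\abs{u}$, is compact.

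Now fix $r_0\notin K$, and choose $\tau>0$ with $[r_0-2\tau,r_0+2\tau]\cap K=\emptyset$. Take any $g\in C(\R)$ supported in $(r_0-\tau,r_0+\tau)$. It suffices to show $\int g\,d\di{\mu}=0$. The key claim is a uniform separation. There is $\kappa>0$ such that whenever $\abs{\abs{u}-r_0}<\tau$, we have
\[\max_{i\in\set{0,1,2}} \operatorname{dist}(x+iu,\supp\mu)\ge\kappa \qquad\text{for every } x.\]
We prove this by compactness. If it failed, there would be sequences $(x_k,u_k)$ with $\abs{u_k}\in[r_0-\tau,r_0+\tau]$ and all three distances tending to $0$. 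Passing to a convergent subsequence, the limit $(x,u)$ lies in $A$ with $\abs{u}\in[r_0-\tau,r_0+\tau]$. This contradicts $K\cap[r_0-2\tau,r_0+2\tau]=\emptyset$.

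Next we use the mollifiers. Since $\widehat{\phi_n}$ is band-limited, $\phi_n$ cannot be compactly supported. So we cannot say that $\phi_n\ast\mu$ vanishes off a $\kappa$-neighbourhood of $\supp\mu$. Instead we use only that $\phi_n$ is an approximate identity with decaying tails: $\int_{\abs{y}>\kappa}\abs{\phi_n(y)}\,dy\to 0$ and $\norm{\phi_n}_{L^\infty(\abs{y}>\kappa)}\to 0$. This holds, e.g., for $\phi_n(y)=2^{nd}\phi(2^ny)$ with $\phi$ Schwartz. For every $(x,u)$ in the support of the integrand, some index $i$ has $\operatorname{dist}(x+iu,\supp\mu)\ge\kappa$, and then
\[\abs{\phi_n\ast\mu(x+iu)}\le \norm{\mu}\sup_{\abs{y}\ge\kappa}\abs{\phi_n(y)} =: \epsilon_n\to 0.\]
(The sign convention $x-iu$ versus $x+iu$ in the definition of $\di{\mu}$ is immaterial: replace $u$ by $-u$, which preserves $\abs{u}$.) The other two factors we bound by $\abs{\phi_n\ast\mu}$, which is integrable. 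So the $(x,u)$-integral is at most
\[\norm{g}_\infty\,\epsilon_n\int\!\!\int \abs{\phi_n\ast\mu}(y)\,\abs{\phi_n\ast\mu}(z)\,dy\,dz \le \norm{g}_\infty\,\epsilon_n\norm{\phi_n}_{L^1}^2\norm{\mu}^2 .\]
To get this, split the region into three pieces according to which index $i$ realises the separation. On each piece, change variables from $(x,u)$ to the two remaining points, which has bounded Jacobian (at most $2^d$). Then use $\int\abs{\phi_n\ast\mu}\le\norm{\phi_n}_1\norm{\mu}$.

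The main obstacle is the case $\epsilon_n\to0$ that this needs: we must choose $\phi_n$ with $\norm{\phi_n}_{L^1}$ bounded and a decaying tail, which the stated constraint $\supp\widehat{\phi_n}\subset B(0,2^n)$ permits, e.g.\ by taking $\widehat{\phi}$ a smooth bump so that $\phi$ is Schwartz. Granting this, the bound tends to $0$. So $\int g\,d\di{\mu}=0$ for every such $g$, hence $r_0\notin\supp\di{\mu}$. The hypothesis that $\di{\mu}$ exists with finite mass guarantees that the limit defining it is the functional we are evaluating.
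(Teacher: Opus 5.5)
The paper states this lemma without proof (``We state the following without proof''), so there is no argument of the authors' to compare with. Your proof is correct and supplies one. Its three steps are all sound:
\begin{enumerate}
\item $K$ is compact, so any $r_0\notin K$ has a $2\tau$-neighbourhood disjoint from $K$.
\item A compactness argument gives a uniform $\kappa>0$ such that, whenever $\bigl||u|-r_0\bigr|\le\tau$, one of $x,x+u,x+2u$ lies at distance at least $\kappa$ from $\operatorname{supp}\mu$.
\item The trilinear estimate puts the separated factor in $L^\infty$, where it is small by the kernel tail, and the other two factors in $L^1$.
\end{enumerate}
The third step is exactly where care is needed, since $\Lambda_3$ is not bounded on $L^1\times L^1\times L^1$.

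What you should state more carefully is the role of the mollifier. The hypothesis ``$\delta(\mu)$ exists'' refers to whatever $(\phi_n)$ was used in the definition, so you cannot choose $\phi_n$ inside the proof. What your argument actually proves is the lemma for every approximate identity satisfying
\[
\sup_n\|\phi_n\|_{L^1}<\infty \quad\text{and}\quad \sup_{|y|\ge\kappa}|\phi_n(y)|\to 0 \ \text{ for each } \kappa>0,
\]
and you should say so explicitly. The second condition is stronger than the usual requirement $\int_{|y|\ge\kappa}|\phi_n|\to0$. With only $L^1$-small tails your estimate breaks down: you would be left with a $3$AP integral whose three factors are controlled only in $L^1$, while $\|\phi_n\ast\mu\|_{L^\infty}$ may grow like $2^{nd}$.

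Your normalization is consistent with the paper. Its Mass Lemma uses that $\widehat{\phi_{n+1}}-\widehat{\phi_n}$ is supported in an annulus, i.e.\ $\widehat{\phi_n}(\xi)=\widehat{\phi}(2^{-n}\xi)$. On $\mathbb{T}^d$ you should periodize, $\phi_n(y)=\sum_{k\in\mathbb{Z}^d}2^{nd}\phi(2^n(y+k))$, which is $O_N\bigl(2^{nd}(2^n\kappa)^{-N}\bigr)$ for $|y|\ge\kappa$.

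A cosmetic point: on $\mathbb{T}^d$ the map $(x,u)\mapsto(x,x+2u)$ is $2^d$-to-one but preserves Haar measure. So each of your three pieces is bounded by exactly $\|g\|_\infty\epsilon_n\|\phi_n\ast\mu\|_{L^1}^2$, and the total by three times this; no Jacobian factor is needed.
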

\subsubsection{Mass Lemma}
In order to prove the Mass Lemma, we need the following technical results.

\begin{lemma}\label{thm:boundmollifiedmeasurefromenergy}
 Let $\mu$ be a probability measure on $[0,1]^d$ satisfying $I_{\alpha}(\mu) < \infty$, and $(\phi_n)_{n\in\N}$ an approximate identity with $\supp\widehat{\phi_n}\in B(0,2^n)$. Then for all $n$
 \[\norm{\phi_n\ast\mu}_{L^{\infty}}\leq \sqrt{I_{\alpha}(\mu)} 2^{n(d-\alpha)/2}.\]
\end{lemma}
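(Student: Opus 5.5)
The plan is to bound $\phi_n\ast\mu$ pointwise by Fourier inversion on $\T^d$, after identifying $[0,1]^d$ with a subset of $\T^d$ as described at the start of Section \oldref{s:fractalresults}. Since $\supp\widehat{\phi_n}\subset B(0,2^n)$ and $\abs{\widehat{\phi_n}}\leq\norm{\phi_n}_{L^1}=1$, for every $x$ we have
\[
\abs{\phi_n\ast\mu(x)}=\abs{\sum_{\xi\in\Z^d}\widehat{\phi_n}(\xi)\widehat{\mu}(\xi)e^{2\pi i\xi\cdot x}}\leq\sum_{\abs{\xi}\leq 2^n}\abs{\widehat{\mu}(\xi)}.
\]
So the $L^{\infty}$ norm is controlled by an $\ell^1$ sum of $\widehat{\mu}$ over the ball $B(0,2^n)$, uniformly in $x$. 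The energy enters through the Fourier form $I_{\alpha}(\mu)\approx\sum_{\xi}\abs{\widehat{\mu}(\xi)}^2\abs{\xi}_{+}^{-(d-\alpha)}$ recalled in the introduction.

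Next I apply Cauchy--Schwarz with the weight $\abs{\xi}_+^{\pm(d-\alpha)/2}$:
\[
\sum_{\abs{\xi}\leq 2^n}\abs{\widehat{\mu}(\xi)}\leq\paren{\sum_{\xi}\abs{\widehat{\mu}(\xi)}^2\abs{\xi}_+^{-(d-\alpha)}}^{\half}\paren{\sum_{\abs{\xi}\leq 2^n}\abs{\xi}_+^{d-\alpha}}^{\half}.
\]
The first factor is $\lesssim\sqrt{I_{\alpha}(\mu)}$. Everything then reduces to counting the second factor. A dyadic decomposition of the ball $\abs{\xi}\leq 2^n$ into shells $\abs{\xi}\sim 2^k$ handles this count.

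That count is the step I expect to be the main obstacle. The weight alone contributes exactly the factor $2^{n(d-\alpha)/2}$ appearing in the statement. However, the number of lattice points in $B(0,2^n)$ contributes a further factor of about $2^{nd/2}$. The physical-space route has the same problem. It uses $I_{\alpha}(\mu)\geq(2r)^{-\alpha}\mu(B(x,r))^2$ together with $\abs{\phi_n}\lesssim 2^{nd}$ on a $2^{-n}$-ball (plus Schwartz tails), and gives $\norm{\phi_n\ast\mu}_{\infty}\lesssim\sqrt{I_{\alpha}(\mu)}\,2^{n(d-\alpha/2)}$, which has the same exponent.

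To reach the stated exponent $n(d-\alpha)/2$, I would first try to avoid losing the lattice-point count. One option is to sharpen the estimate $\abs{\widehat{\phi_n}}\leq 1$ using the specific approximate identity. Another is to use the $\ell^2$ (Plancherel) version of the same Cauchy--Schwarz step, which yields exactly $\sqrt{I_{\alpha}(\mu)}\,2^{n(d-\alpha)/2}$ with no counting loss. I would then check whether the later Mass and Non-singularity Lemmas use the bound only through such $L^2$-type averages of $\phi_n\ast\mu$. If they do, the stated $L^{\infty}$ constant can be replaced there without changing the argument.

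If a genuinely pointwise bound is required, I expect the energy hypothesis alone to give only the exponent $n(2d-\alpha)/2$. In that case the argument should either carry this weaker exponent through the subsequent lemmas, or add a Frostman-type ball condition such as \eqref{itm:1}. This is the point of the plan I am least sure of.
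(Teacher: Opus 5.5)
Your diagnosis is correct, and it locates exactly where the paper's own proof breaks. The paper argues just as you do: Fourier inversion, $|\widehat{\phi_n}|\le 1$, $\operatorname{supp}\widehat{\phi_n}\subset B(0,2^n)$, then Cauchy--Schwarz with weights $|\xi|^{\mp(d-\alpha)/2}$. In the last step, however, it bounds the second factor by $\sup_{|\xi|\le 2^n}|\xi|^{(d-\alpha)/2}=2^{n(d-\alpha)/2}$ instead of by $\bigl(\sum_{|\xi|\le 2^n}|\xi|^{d-\alpha}\bigr)^{1/2}\approx 2^{n(2d-\alpha)/2}$. That pairs an $\ell^2$ norm with an $\ell^\infty$ norm and silently drops the lattice-point count you flagged.

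This cannot be repaired, because the lemma as stated is false, which settles the point you were unsure about. Take a nonnegative approximate identity, e.g.\ the periodization of $\phi_n=2^{nd}\phi(2^n\cdot)$ with $\phi=|\psi|^2$, where $\widehat\psi\ge 0$ is smooth, supported in $B(0,\tfrac12)$, and normalized so that $\int\phi=1$. Then $\operatorname{supp}\widehat{\phi_n}\subset B(0,2^n)$ and $\phi_n\ge c\,2^{nd}$ on $B(0,c\,2^{-n})$. Let $\mu$ be normalized Lebesgue measure on a ball $B(x_0,c\,2^{-n})\subset[0,1]^d$. Then $I_\alpha(\mu)\approx 2^{n\alpha}$ and $\phi_n\ast\mu(x_0)\ge c\,2^{nd}$, whereas the claimed bound is $\sqrt{I_\alpha(\mu)}\,2^{n(d-\alpha)/2}\approx 2^{nd/2}$. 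So the inequality fails by a factor $\approx 2^{nd/2}$, even allowing a constant. The same example shows your exponent $n(2d-\alpha)/2$ is sharp among bounds of the form $C\sqrt{I_\alpha(\mu)}\,2^{n\kappa}$. For a single fixed $\mu$: an Ahlfors-regular measure of dimension $\gamma\in(\alpha,\frac{d+\alpha}{2})$ has $I_\alpha(\mu)<\infty$ but $\|\phi_n\ast\mu\|_{L^\infty}\approx 2^{n(d-\gamma)}$, which eventually exceeds $2^{n(d-\alpha)/2}$. What is true is precisely what you derived: $\|\phi_n\ast\mu\|_{L^\infty}\lesssim\sqrt{I_\alpha(\mu)}\,2^{n(2d-\alpha)/2}$, and the stated right-hand side is the correct bound, up to a constant, for $\|\phi_n\ast\mu\|_{L^2}$.

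On the downstream question: the lemma is used only in the Mass Lemma; the Non-singularity Lemma does not invoke it. There $\|\phi_N\ast\mu\|_{L^\infty}$ is fed into the bounded Varnavides function $c$, and the argument needs that input to become essentially independent of $N$ as $\alpha\to d$. With the correct exponent the input tends to $\sqrt{I_\alpha(\mu)}\,2^{Nd/2}$ instead, so carrying the weaker exponent through does not close the argument.

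Your $L^2$ option does work. Replace $c$ by the $L^2$ Varnavides function $c^{(2)}$ and use $\|\phi_N\ast\mu\|_{L^2}\lesssim\sqrt{I_\alpha(\mu)}\,2^{N(d-\alpha)/2}$. Positivity of $c^{(2)}$ follows directly from bounded Varnavides by truncation. If $f\ge0$, $\|f\|_{L^1}\ge\delta$ and $\|f\|_{L^2}\le M$, then $\int_{\{f>K\}}f\le M^2/K$. So with $K=2M^2/\delta$ the function $\min(f,K)$ has mass at least $\delta/2$, and $\Lambda_3(f)\ge\Lambda_3(\min(f,K))$. This step needs $\phi_N\ast\mu\ge 0$, which is one reason to take $\phi_N\ge 0$ as above.
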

\begin{proof}
 Since $\phi_n\ast\mu$ has an $\ell^1$ Fourier transform, we have by the triangle inequality and Cauchy-Schwarz
 \eqn{ 
 \abs{\phi_n\ast\mu(x)} = \abs{\sum \widehat{\phi_n}(\xi)\widehat{\mu}(\xi) e^{2\pi i \xi\cdot x}}
 \\\leq& 
 \sum_{\abs{\xi}\leq 2^n} \paren{\abs{\widehat{\mu}(\xi)}\abs{\xi}^{-(d-\alpha)/2}}\paren{\abs{\xi}^{(d-\alpha)/2}}
 \\\leq&
 \sqrt{I_{\alpha}(\mu)} 2^{n(d-\alpha)/2}.
 }
\end{proof}

\begin{lemma}\label{thm:decayToLq}
  Suppose that the measure $\mu$ on $\T^d$ satisfies \eqref{itm:b} for $\beta>0$. Then 
  \[\norm{\widehat{\mu}}_{\ell^q}^{q}\leq I_{\alpha}(\mu)\sup_{\xi}\abs{\widehat{\mu}(\xi)}^{q-2}\abs{\xi}^{d-\alpha}.\]
  In particular, $\widehat{\mu}\in\ell^q$  for $q = 2+2\frac{d-\alpha}{\beta}=2\frac{\beta+d-\alpha}{\beta}$.
 \end{lemma}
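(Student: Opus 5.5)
The inequality is a pointwise splitting of $\abs{\widehat{\mu}(\xi)}^q$ into an $\ell^2$ piece weighted by $\abs{\xi}^{-(d-\alpha)}$, which the energy $I_{\alpha}(\mu)$ controls, and a leftover factor that we bound by a supremum. For each $\xi\neq 0$ we write
\[
\abs{\widehat{\mu}(\xi)}^q = \paren{\abs{\widehat{\mu}(\xi)}^2\abs{\xi}^{-(d-\alpha)}}\cdot\paren{\abs{\widehat{\mu}(\xi)}^{q-2}\abs{\xi}^{d-\alpha}},
\]
and sum over $\xi\in\Z^d$. We pull out the supremum of the second factor and then use the Fourier representation of the Riesz energy recalled before Theorem \ref{thm:MAIN}, namely $I_{\alpha}(\mu)\approx\sum_{\xi}\abs{\widehat{\mu}(\xi)}^2\abs{\xi}_+^{-(d-\alpha)}$. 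This gives
\[
\norm{\widehat{\mu}}_{\ell^q}^q\lesssim I_{\alpha}(\mu)\,\sup_{\xi}\abs{\widehat{\mu}(\xi)}^{q-2}\abs{\xi}^{d-\alpha}.
\]
The $\xi=0$ term is handled by the convention $\abs{\xi}_+=\max(1,\abs{\xi})$; it contributes $\abs{\widehat{\mu}(0)}^q=\norm{\mu}^q$, which is harmless for a probability measure. Up to the implicit constant in the Fourier formula for $I_{\alpha}$, which the paper suppresses, this is the first claim. No decay hypothesis is used at this stage; it is essentially H\"older with exponents $1$ and $\infty$.

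For the ``in particular'' statement we insert \eqref{itm:b}. We have $\abs{\widehat{\mu}(\xi)}^{q-2}\leq C_F^{q-2}(1+\abs{\xi})^{-(q-2)\beta/2}$, so the supremum is at most
\[
C_F^{q-2}\sup_{\xi}(1+\abs{\xi})^{-(q-2)\beta/2}\abs{\xi}^{d-\alpha}.
\]
This is finite exactly when $(q-2)\beta/2\geq d-\alpha$. The critical choice $q=2+2\frac{d-\alpha}{\beta}=2\frac{\beta+d-\alpha}{\beta}$ makes the exponent of $\abs{\xi}$ nonpositive, so the supremum is $\leq C_F^{q-2}$. Combined with $I_{\alpha}(\mu)<\infty$, which the surrounding discussion assumes via \eqref{itm:a}, this gives $\widehat{\mu}\in\ell^q$ and the bound $\norm{\widehat{\mu}}_{\ell^{q}}^{q}\leq I_{\alpha}(\mu)C_F^{q-2}$ used in Corollary \ref{thm:Fractal3AP}.

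I expect no real obstacle. The only delicate points are bookkeeping: the comparability constant in $I_{\alpha}(\mu)\approx\sum\abs{\widehat{\mu}}^2\abs{\xi}_+^{\alpha-d}$, and the convention at $\xi=0$. I would state explicitly that the inequality holds up to these constants, which is consistent with how the paper later uses it.
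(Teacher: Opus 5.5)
Your proposal is correct and follows the paper's own argument: split $\abs{\widehat{\mu}(\xi)}^q$ as $\bigl(\abs{\widehat{\mu}(\xi)}^2\abs{\xi}^{-(d-\alpha)}\bigr)\bigl(\abs{\widehat{\mu}(\xi)}^{q-2}\abs{\xi}^{d-\alpha}\bigr)$, pull out the supremum, and identify the remaining sum with $I_{\alpha}(\mu)$ via its Fourier representation. Beyond the paper, you also handle the $\xi=0$ term through $\abs{\xi}_+$ and verify the ``in particular'' clause explicitly. That verification uses condition (b), notes that finiteness of $I_{\alpha}(\mu)$ is needed, and gives the bound $I_{\alpha}(\mu)C_F^{q-2}$ used in Corollary \ref{thm:Fractal3AP}; the paper leaves all of this implicit.
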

 \begin{proof}
  We have
  \begin{align*}&
   \sum_{\xi\in\Z^d} \abs{\widehat{\mu}}^q
   \\=&
   \sum_{\xi\in\Z^d} \paren{\abs{\widehat{\mu}}^2\abs{\xi}^{-(d-\alpha)}}\paren{\abs{\widehat{\mu}}^{q-2}\abs{\xi}^{d-\alpha}}
   \\\leq&
   I_{\alpha}(\mu)\sup_{\xi}\abs{\widehat{\mu}(\xi)}^{q-2}\abs{\xi}^{d-\alpha}
  \end{align*}
 \end{proof}
 
 From here forward, we define $J := J_{(d-2)/2}$, the Bessel function of order $(d-2)/2$. We record some well-known Bessel function estimates in the following lemma, which may be found in \cite{MattilaFalconer} (2.1), (2.2), and the first displayed formula on page 216.
 
 \begin{lemma}\label{thm:bessel}
 Consider the Bessel function $J=J_{(d-2)/2}.$
 \begin{enumerate}
  \item For all $\xi\in\R$
  \[\abs{\widehat{J}(\xi)}\leq \abs{\xi}^{-1/2}.\]
  \item For all $\xi\in\R$
  \[\abs{\widehat{J}(\xi)}\leq \abs{\xi}^{(d-2)/2}.\]
  \item 
  There is a constant $C$ so that the following holds. Let $g\in C(\R)$ and $G\in C(\R^d)$ satisfy $G(u) = g(\abs{u})$. Then
  \[\widehat{G}(\xi) = C \abs{\xi}^{-(d-2)/2}\int_0^{\infty} r^{d/2}J(r\abs{\xi}) g(r)\,dr.\]
 \end{enumerate}

 \end{lemma}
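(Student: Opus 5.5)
These are classical facts. I would prove them directly from the Poisson integral representation of $J_\nu$, with $\nu=(d-2)/2$, rather than cite them. I read items (1) and (2) as bounds on $J$ itself, up to an absolute constant depending only on $d$; the hat there appears to be a typographical artifact, and these are the forms in which they are used (Mattila's (2.1)–(2.2)). Throughout I write $t=\abs{\xi}$.

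\textbf{Item (3).} I would prove this first, since it fixes the normalisation. Writing $\sigma$ for surface measure on $S^{d-1}$, the standard identity is
\[\widehat{\sigma}(\zeta)=c_d\abs{\zeta}^{-(d-2)/2}J(\abs{\zeta}).\]
It follows from the Poisson representation
\[J_\nu(t)=c_\nu t^\nu\int_{-1}^1 e^{its}(1-s^2)^{\nu-1/2}\,ds,\qquad \nu>-\tfrac12,\]
by slicing the sphere along the direction of $\zeta$: the slice at height $s$ has $(d-2)$-dimensional measure proportional to $(1-s^2)^{(d-3)/2}=(1-s^2)^{\nu-1/2}$. Then I pass to polar coordinates $u=r\theta$:
\[\widehat{G}(\xi)=\int_0^\infty g(r)\,r^{d-1}\,\widehat{\sigma}(r\xi)\,dr=c_d\,\abs{\xi}^{-(d-2)/2}\int_0^\infty g(r)\,r^{d-1-(d-2)/2}\,J(r\abs{\xi})\,dr.\]
Since $d-1-(d-2)/2=d/2$, this is exactly the claimed formula. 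The integral is understood for $g$ with enough decay, or in the distributional sense, as in Mattila.

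\textbf{Items (1) and (2).} For $d\ge 2$ we have $\nu\ge 0$, and the weight $(1-s^2)^{\nu-1/2}$ is integrable, so bounding the Poisson integral trivially gives $\abs{J(t)}\lesssim t^{\nu}$ for all $t$. This is item (2). For $t\le 1$, item (1) then follows because $t^{\nu}\le 1\le t^{-1/2}$.

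For $t\ge 1$, I would prove the decay $\abs{J(t)}\lesssim t^{-1/2}$. The oscillatory integral $\int_{-1}^1 e^{its}(1-s^2)^{\nu-1/2}\,ds$ has no interior stationary point, so all contributions come from the endpoints $s=\pm1$. There the amplitude behaves like $(1\mp s)^{\nu-1/2}$, so each endpoint contributes $O(t^{-(\nu+1/2)})$. I would show this by a smooth partition of unity: away from the endpoints, integrate by parts repeatedly to get rapid decay; near $s=1$, substitute $s=1-w$ and use the standard estimate
\[\int_0^{\infty}e^{-itw}w^{a}\chi(w)\,dw=O\!\left(t^{-(a+1)}\right),\qquad a>-1,\]
for a smooth cutoff $\chi$, which one gets by rotating the contour or integrating by parts after splitting at $w=1/t$. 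Multiplying by the prefactor $t^{\nu}$ gives $\abs{J(t)}\lesssim t^{-1/2}$. When $d=1$, $\nu=-1/2$ and $J(t)=\sqrt{2/(\pi t)}\cos t$ explicitly, so both bounds are immediate.

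\textbf{Main obstacle.} The only delicate step is the endpoint analysis, where the amplitude is singular (for $d=2$) or vanishes to fractional order. If the direct argument becomes messy, I would fall back on the classical asymptotic
\[J_\nu(t)=\sqrt{\tfrac{2}{\pi t}}\cos\!\left(t-\tfrac{\nu\pi}{2}-\tfrac{\pi}{4}\right)+O\!\left(t^{-3/2}\right),\]
which gives (1) at once for large $t$. Finally, (2) at large $t$ also follows from (1), since $t^{-1/2}\le t^{\nu}$ when $\nu\ge 0$ and $t\ge 1$, so the two bounds hold for all $t$.
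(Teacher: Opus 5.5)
Your proposal is correct. The comparison is simple, because the paper gives no argument: it cites (2.1), (2.2) and the first display on p.~216 of \cite{MattilaFalconer}. You instead derive everything from the Poisson integral:
\begin{itemize}
\item slicing the sphere gives $\widehat{\sigma}(\zeta)=c_d\abs{\zeta}^{-(d-2)/2}J(\abs{\zeta})$, and polar coordinates then give (3);
\item the trivial bound on the Poisson integral gives (2);
\item endpoint analysis, or the Hankel asymptotic that the paper itself quotes in Lemma~\ref{thm:mattilamanipulations}, gives (1).
\end{itemize}
What your route buys is that it pins down the correct reading of the statement, which the citation hides. Your reading is in fact forced, for three reasons.
\begin{itemize}
\item With the hats, (1) is already false for $d=2$. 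Up to normalisation, $\widehat{J_0}$ equals $2(1-\xi^2)^{-1/2}$ on $\abs{\xi}<1$, so it blows up at $\abs{\xi}=1$.
\item The constant in (1) must depend on $d$, since $\sup_{t>0}t^{1/2}\abs{J_\nu(t)}\to\infty$ as $\nu\to\infty$.
\item Item (3) as written presumes a Fourier convention with no $2\pi$ in the exponent. With $e^{-2\pi i x\cdot\xi}$ the argument of $J$ becomes $2\pi r\abs{\xi}$, which is harmless for the later uses.
\end{itemize}

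Three small repairs are needed.

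First, $(1-s^2)^{(d-3)/2}$ is the density of the pushforward of $\sigma$ onto the $\zeta$-axis, not the measure of the slice. The slice has measure proportional to $(1-s^2)^{(d-2)/2}$, and the extra factor $(1-s^2)^{-1/2}$ is the coarea Jacobian.

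Second, splitting at $w=1/t$ and integrating by parts once gives $O(t^{-(a+1)})$ only when $-1<a\le 0$. For $a>0$ the tail bound is only $O(t^{-1})$. Since $a=(d-3)/2>0$ once $d\ge 4$, this would give merely $\abs{J(t)}\lesssim t^{\nu-1}$. In that range you have three options:
\begin{itemize}
\item integrate by parts repeatedly starting from $w=0$ (the boundary terms at $0$ vanish as long as the exponent is positive) until the exponent drops into $(-1,0]$;
\item deform the whole contour $[-1,1]$ into the upper half-plane, where $(1-s^2)^{\nu-1/2}$ is analytic;
\item simply use your Hankel fallback.
\end{itemize}

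Third, $d=1$ lies outside the range $\nu>-1/2$ of the Poisson formula. So (3) for $d=1$ needs a separate one-line check using $J_{-1/2}(t)=\sqrt{2/(\pi t)}\cos t$.
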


  \begin{lemma}\label{thm:fourpolarrep}
  For any $f,g\in C(\R^d)$ with $\hat{g}\in L^1$ and $\supp(g)\subset[0,\infty)$.
  \[\int g(r)\,d\di{f}(r) = O(1)\int g(r) r^{d/2} \su{\ro}{ \ro^{d/2}J(r\ro)\si{f}(\ro)}\,dr.\]
  Further, if $\widehat{f}\in L^1$,
  \[\di{f}(r) =  O(1) r^{d/2} \su{\ro}{\ro^{d/2}J(r\ro)\si{f}(\ro)}.\]
 \end{lemma}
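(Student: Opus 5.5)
We prove the polar representation by computing $\int g(r)\,d\delta(f)(r)$ directly from the definition of $\delta(f)$ as a limit of mollified integrals. Since $f\in C(\R^d)$, the mollifiers $\phi_n\ast f$ converge to $f$ locally uniformly, so for continuous $g$ of compact support the limit may be taken inside. This reduces the problem to
\[\int g(\abs{u}) \int f(x)f(x-u)f(x-2u)\,dx\,du.\]
Write $G(u)=g(\abs{u})$, and for fixed $u$ set
\[F(u)=\int f(x)f(x-u)f(x-2u)\,dx.\]
By Parseval in $u$, $\int G(u)F(u)\,du=\int \widehat{G}(\rho')\overline{\widehat{F}(\rho')}\,d\rho'$. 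Expanding each factor of $f$ in Fourier and integrating first in $x$ and then in $u$ expresses $\widehat{F}$ as a convolution-type integral of three copies of $\widehat f$. After the linear change of variables that centres the two "outer" frequencies at $\eta\pm\rho'$ (the middle one being $2\eta$), this becomes
\[\int \widehat{f}(2\eta)\,\widehat{f}(\eta-\rho')\,\overline{\widehat{f}(\eta+\rho')}\,d\eta,\]
up to harmless constants and sign conventions; these are where the $O(1)$ in the statement comes from.

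Next we pass to polar coordinates $\rho'=\rho\theta$, with $\rho>0$ and $\theta\in S^{d-1}$. Because $G$ is radial, $\widehat{G}(\rho\theta)$ depends only on $\rho$. Lemma~\ref{thm:bessel}(3) gives
\[\widehat{G}(\rho\theta)=C\rho^{-(d-2)/2}\int_0^\infty r^{d/2}J(r\rho)\,g(r)\,dr.\]
The remaining angular-plus-$\eta$ integral is exactly $\sigma(f)(\rho)$, so
\[\int G F = O(1)\int_0^\infty \rho^{d-1}\rho^{-(d-2)/2}\,\sigma(f)(\rho)\int_0^\infty r^{d/2}J(r\rho)g(r)\,dr\,d\rho.\]
Since $\rho^{d-1}\rho^{-(d-2)/2}=\rho^{d/2}$, applying Fubini to swap the $r$ and $\rho$ integrals gives the first claimed formula.

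\textbf{Main obstacle: justifying Fubini and the interchange of limits.} The Bessel kernel is not absolutely integrable at infinity, so the swap of $r$ and $\rho$ needs care. Here the hypothesis $\widehat{g}\in L^1$ supplies decay of the inner $r$-integral in $\rho$; the bounds of Lemma~\ref{thm:bessel}(1),(2) on $\widehat J$ serve the same purpose. For absolute convergence of the triple product defining $\sigma(f)$, we plan to first establish the identity for $f$ with rapidly decaying $\widehat f$, for instance the mollified functions $\phi_n\ast f$, where every integral converges absolutely. We would then pass to the limit $n\to\infty$ using dominated convergence in $\eta$ and $\rho$.

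For the second claim we assume $\widehat f\in L^1$. Then $\sigma(f)$ is bounded and continuous, and the inner integral $\int\rho^{d/2}J(r\rho)\sigma(f)(\rho)\,d\rho$ defines a function of $r$, interpreted as an oscillatory integral or a limit of truncations. The first identity, holding for every admissible test function $g$, then says that $\delta(f)$ is absolutely continuous with density $O(1)\,r^{d/2}\int\rho^{d/2}J(r\rho)\sigma(f)(\rho)\,d\rho$. This follows once we know such test functions $g$ are dense enough to determine a measure on $[0,\infty)$, which they are, since smooth compactly supported $g$ qualify.
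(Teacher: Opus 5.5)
Your outline follows the paper's own proof: Parseval in $u$, the Bessel formula for the radial $\widehat G$, polar coordinates and Fubini, then duality for the density when $\widehat f\in L^1$. You also pay more attention to justification than the paper does.

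The step that fails as written, in your proposal and equally in the paper's proof, is the claim that the Fourier computation yields $\int\widehat f(2\eta)\,\widehat f(\eta-\rho')\,\overline{\widehat f(\eta+\rho')}\,d\eta$ ``up to harmless constants and sign conventions''. Carry the computation out. The $x$-integration forces the frequencies $a,b,c$ of $f(x),f(x-u),f(x-2u)$ to satisfy $a+b+c=0$, and the $u$-integration evaluates $\widehat G$ at $b+2c$. Substituting $b=2\eta$, $c=\rho'-\eta$, for real $f$ and $\widehat f(\xi)=\int f(x)e^{-2\pi i x\cdot\xi}\,dx$, you get
\[
\int G(u)F(u)\,du \;=\; 2^{d}\iint \widehat G(2\rho')\,\widehat f(2\eta)\,\overline{\widehat f(\eta-\rho')}\;\overline{\widehat f(\eta+\rho')}\,d\eta\,d\rho'.
\]
No convention turns this into the one-conjugate integrand. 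Under $f\mapsto f(\cdot-t)$ the integrand above is unchanged, as it must be, since $\delta(f)$ is translation invariant. The one-conjugate integrand instead picks up the non-constant phase $e^{-4\pi i t\cdot(\eta-\rho')}$, so the $\sigma(f)$ of the statement is not translation invariant. In addition, $\widehat G(2\rho')$ produces $J(2r\rho)$ rather than $J(r\rho)$, which is a dilation, not an $O(1)$ factor.

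So what your argument actually proves is the lemma with $\sigma(f)(\rho)$ replaced by $\iint\widehat f(2\eta)\,\overline{\widehat f(\eta-\rho\theta)}\,\overline{\widehat f(\eta+\rho\theta)}\,d\eta\,d\sigma(\theta)$ and $J(r\rho)$ replaced by $J(2r\rho)$. You should state and prove that corrected version. The correction is harmless for the later estimates in the paper, which only use $|\sigma(f)|$ or $\varsigma(f)$.

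There are two smaller points on justification.
\begin{itemize}
\item The hypothesis $\widehat g\in L^1$ concerns the one-dimensional transform of $g$. It does not control $\int r^{d/2}J(r\rho)g(r)\,dr$ when $d\ge 2$. What legitimizes your Fubini step for $\phi_n\ast f$ is that $g$ has compact support, $|J(t)|\lesssim\min(t^{(d-2)/2},t^{-1/2})$, and $\widehat{\phi_n\ast f}$ has compact support.
\item For the pointwise formula, boundedness of $\sigma(f)$ (all that you and the paper invoke) does not make $\int\rho^{d/2}J(r\rho)\sigma(f)(\rho)\,d\rho$ converge, since the integrand is then only $O(r^{-1/2}\rho^{(d-1)/2})$. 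What does suffice is
\[
\int_0^\infty \rho^{d-1}|\sigma(f)(\rho)|\,d\rho \;\lesssim\; \iint |\widehat f(2\eta)|\,|\widehat f(\eta-\xi)|\,|\widehat f(\eta+\xi)|\,d\xi\,d\eta \;\lesssim\; \|\widehat f\|_{L^1}^2\,\|\widehat f\|_{L^\infty}.
\]
Together with the Bessel bounds, this gives absolute convergence for each fixed $r>0$.
\end{itemize}
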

 \let\oldsu\su
 \renewcommand{\su}[2]{\int #2\,d\eta\,d\xi}
 \begin{proof}
  Let $G(u) = g(\abs{u})$. 
  Then using Lemma \ref{thm:bessel} and writing $\xi = s\theta$ in polar coordinates
  \begin{align*}&
  \int g\,d\di{f} =  \int G(u)\prod_{i\in[3]}f(x-ir)\,dx\,dr
   \\=&
    \su{\xi,\eta}{\widehat{G}(\xi)\hat{f}(\eta+\xi)\overline{\hat{f}(\eta-\xi)}\hat{f}(2\eta)}
    \\=&
    r^{d/2}\int_{0}^{\infty}\!\!\int_{\mathbb S^{d-1}}
        s^{d/2}J(rs)\,\varsigma_f(s)\,d\sigma(\theta)\,ds
   \\=&
    O(1)\su{\xi,\eta}{ \abs{\xi}^{-(d-2)/2}\int_0^{\infty} r^{d/2}J(r\abs{\xi}) g(r)\,dr\,\hat{f}(\eta+\xi)\overline{\hat{f}(\eta-\xi)}\hat{f}(2\eta)}
   \\=&
    O(1)\int_0^{\infty}	 g(r) \abs{r}^{d/2}\su{\xi,\eta}{ \abs{\xi}^{-(d-2)/2}J(r\abs{\xi})\hat{f}(\eta+\xi)\overline{\hat{f}(\eta-\xi)}\hat{f}(2\eta)}\,dr
   \\=&
    O(1)\int_0^{\infty} g(r) \abs{r}^{d/2}\iiint s^{d-1} s^{-(d-2)/2}J(rs)\hat{f}(\eta+s\theta)\overline{\hat{f}(\eta-s\theta)}\hat{f}(2\eta)\,d\eta\,d\sigma(\theta)\,ds\,dr
   \\=&
    O(1)\int_0^{\infty} g(r) \abs{r}^{d/2}\int s^{d/2} J(rs)\si{f}(s)\,ds\,dr.
  \end{align*}
  Under the hypothesis that $\widehat{f}\in L^1$, it is immediate that $\si{f}$ is bounded, and thus that the above gives the desired pointwise estimate.
 \end{proof}

 \begin{lemma}\label{thm:distancemeasuremass}[Mass lemma.]
  Let $\mu$ be a probability measure satisfying \eqref{itm:a} and \eqref{itm:b} and suppose that the measure $\di{\mu}$ exists. Then if
  \[ (2\beta + d-\alpha)/\beta < 3\]
  and $\alpha$ is sufficiently close to $d$ depending on the quantities $C_F,\beta$, and $I_{\alpha}(\mu)$, the measure $\di{\mu}$ has positive mass.
 \end{lemma}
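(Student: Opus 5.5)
The total mass of $\di{\mu}$ is obtained by testing against $g\equiv 1$ on the range of $\abs{u}$ (compactly supported, since everything lives on $\T^d$). Then
\[\norm{\di{\mu}} = \lim_{n\ra\infty}\Lambda_3(\phi_n\ast\mu),\]
so it suffices to bound $\Lambda_3(\mu_n)$ from below uniformly in $n$, where $\mu_n:=\phi_n\ast\mu$. I would take $\phi_n\geq 0$ with $\int\phi_n=1$, so that $\mu_n\geq 0$, $\norm{\mu_n}_{L^1}=1$ and $\abs{\widehat{\mu_n}}\leq\abs{\widehat{\mu}}$. The plan is a two-scale argument: the main term comes from a fixed low-frequency truncation $\mu_N$, and the passage from $\mu_N$ to $\mu_n$ for $n\geq N$ is controlled by the Fourier decay \eqref{itm:b} and the energy \eqref{itm:a}. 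This mirrors the footnote to Theorem \ref{thm:MAIN}.

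\textbf{Step 1 (main term).} By Lemma \ref{thm:boundmollifiedmeasurefromenergy},
\[\norm{\mu_N}_{L^\infty}\leq \sqrt{I_{\alpha}(\mu)}\,2^{N(d-\alpha)/2}.\]
Corollary \ref{thm:varnavides} (the Varnavides function $c$) then gives
\[\Lambda_3(\mu_N)\geq c_0:=c\paren{\sqrt{I_{\alpha}(\mu)}\,2^{N(d-\alpha)/2}}.\]
Alternatively, one could use Corollary \ref{thm:L2count}, since $\norm{\mu_N}_{L^2}^2\lesssim I_{\alpha}(\mu)2^{N(d-\alpha)}$. The key point is that as $\alpha\to d$, this $L^\infty$ bound tends to $\sqrt{I_{\alpha}(\mu)}$ for any fixed $N$. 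So $c_0$ stays bounded below by a positive quantity depending only on an upper bound for $I_{\alpha}(\mu)$, provided $N(d-\alpha)\lesssim 1$.

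\textbf{Step 2 (telescoping tail).} For $n>N$ write
\[\Lambda_3(\mu_n)-\Lambda_3(\mu_N)=\sum_{k=N}^{n-1}\paren{\Lambda_3(\mu_{k+1})-\Lambda_3(\mu_k)}.\]
Expand each difference trilinearly, so that one slot carries $\mu_{k+1}-\mu_k$, whose Fourier transform is supported where $\abs{\xi}\approx 2^k$ and is bounded by $\abs{\widehat{\mu}}$. Using $\Lambda_3(f_0,f_1,f_2)=\sum_\xi\widehat{f_0}(\xi)\widehat{f_1}(-2\xi)\widehat{f_2}(\xi)$, take the sup of the dyadic factor, which by \eqref{itm:b} is at most $C_F2^{-k\beta/2}$. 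Cauchy--Schwarz on the other two factors then gives a bound by $\sum_{\abs{\xi}\lesssim 2^{k+1}}\abs{\widehat{\mu}(\xi)}^2\leq I_{\alpha}(\mu)2^{(k+1)(d-\alpha)}$. Each difference is therefore $\lesssim C_FI_{\alpha}(\mu)2^{-k(\beta-2(d-\alpha))/2}$. Summing the geometric series, which converges once $\beta>2(d-\alpha)$, gives
\[\abs{\Lambda_3(\mu_n)-\Lambda_3(\mu_N)}\lesssim \frac{C_FI_{\alpha}(\mu)}{1-2^{-(\beta-2(d-\alpha))/2}}\,2^{-N(\beta-2(d-\alpha))/2}.\]
This is uniform in $n$. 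The hypothesis $(2\beta+d-\alpha)/\beta<3$, i.e.\ $d-\alpha<\beta$, together with $\alpha$ near $d$, makes the exponent positive. One could instead use Lemma \ref{thm:decayToLq} and Hölder in $\ell^3$ to get the exponent $\beta-(d-\alpha)$ appearing in the footnote.

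\textbf{Step 3 (balancing) — the main obstacle.} We need $N$ with
\[C\,C_FI_{\alpha}(\mu)\,2^{-N(\beta-2(d-\alpha))/2}<c\paren{\sqrt{I_{\alpha}(\mu)}\,2^{N(d-\alpha)/2}}.\]
The difficulty is that the Varnavides function $c$ has no good explicit rate, so this cannot be solved by a closed-form choice. The plan is to fix an upper bound $I_0\geq I_{\alpha}(\mu)$ and first choose $N$ so large that $C\,C_FI_0\,2^{-N\beta/4}<\tfrac12 c(2\sqrt{I_0})$. Then take $\alpha$ so close to $d$ that $2^{N(d-\alpha)/2}\leq 2$ and $\beta-2(d-\alpha)\geq\beta/2$. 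With these choices, $\Lambda_3(\mu_n)\geq\tfrac12 c(2\sqrt{I_0})>0$ for all $n\geq N$. Letting $n\to\infty$ gives $\norm{\di{\mu}}>0$, with a lower bound depending only on $C_F$, $\beta$ and $I_{\alpha}(\mu)$, as claimed.
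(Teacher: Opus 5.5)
Your proposal is correct and is essentially the paper's own argument. You apply Varnavides to $\phi_N\ast\mu$ via the energy bound $\norm{\phi_N\ast\mu}_{L^\infty}\lesssim\sqrt{I_{\alpha}(\mu)}\,2^{N(d-\alpha)/2}$, control a telescoping trilinear tail whose $k$-th term is $\lesssim C_F I_{\alpha}(\mu)2^{-k(\beta-2(d-\alpha))/2}$, and then choose $N$ before pushing $\alpha$ toward $d$. The one difference is that you bound the tail terms by a sup on the dyadic slot plus Cauchy--Schwarz, while the paper uses $\ell^3$ H\"older together with its $\ell^q$-from-decay lemma.

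Your Step 3 orders these choices more cleanly than the paper, whose $c_0$ is defined in terms of the very scale that is then chosen to beat it. One small quibble: a nonnegative $\phi_k$ cannot have $\widehat{\phi_k}\equiv 1$ near the origin, so $\widehat{\mu_{k+1}}-\widehat{\mu_k}$ is not literally annular. This does not affect your argument, since you only use $\sup_{\xi}\abs{\widehat{\phi_{k+1}}(\xi)-\widehat{\phi_k}(\xi)}\abs{\widehat{\mu}(\xi)}\lesssim C_F2^{-k\beta/2}$. That bound holds for dilates of a smooth even $\widehat{\phi}$ once $\beta\leq 4$, which costs nothing because decreasing $\beta$ preserves hypothesis (b).
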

\begin{proof}
 Let $(\phi_n)$ be an approximate identity satisfying the hypotheses of Lemma \ref{thm:boundmollifiedmeasurefromenergy}, so that for any $n\in\N$
 \[\norm{\phi_n\ast\mu}_{L^{\infty}}\leq \sqrt{I_{\alpha}(\mu)}2^{n(d-\alpha)}\]
 and $\norm{\phi_n\ast\mu}_{L^1}=1$.
 
 Then by Varnavides' Theorem  there is a monotonic decreasing $c = c\paren{\norm{\phi_n\ast\mu}_{L^{\infty}}}\geq c(\sqrt{I_{\alpha}(\mu)}2^{n(d-\alpha)})$ (so that $c$ is in particular independent of any other properties of the measure $\mu$), such that
 \[\Lambda_3(\phi_n\ast\mu)\geq c.\]
 Now fix $N\in\N$. Consider the difference 
 \eqn{ 
 \abs{\norm{\di{\mu}} - \Lambda_3(\phi_N\ast\mu)} = \lim_{n\ra\infty}\abs{\Lambda_3(\phi_n\ast\mu) - \Lambda_3(\phi_N\ast\mu)}
 \\\leq&
 \sum_{n\geq N} \abs{\Lambda_3(\phi_{n+1}\ast\mu) - \Lambda_3(\phi_n\ast\mu)}
 }
 Let 
 \[F_n =\set{\paren{f,f,f-g},\paren{f,f-g,g},\paren{f-g,g,g}}\]
 where $f=\phi_{n+1}\ast\mu$ and $g=\phi_{n}\ast\mu$, so that each triple in $F_n$ has exactly one entry equal to $(\phi_{n+1}-\phi_n)\ast\mu$.
Using the identity $\Lambda_3(\phi_{n+1}\ast\mu)-\Lambda_3(\phi_n\ast\mu) = \sum_{(f_0,f_1,f_2)\in F_n}\Lambda_3(f_0,f_1,f_2)$, which follows from the identity $\prod a_i - \prod b_i = \sum_j \paren{\prod_{i<j} a_i}\paren{a_j-b_j}\paren{\prod_{i>j} b_j}$ applied to the integrands, we bound
 \eqn{ 
 \abs{\Lambda_3(\phi_{n+1}\ast\mu) - \Lambda_3(\phi_{n}\ast\mu)}
 \\\leq&
 12 \sup_{\vec{f}\in F_n} \abs{\Lambda_3(f_0,f_1,f_2)}.
 }
 Taking Fourier transform, applying H\" older's inequality and using the support properties of $\paren{\widehat{\phi_{n+1}}-\widehat{\phi_n}}\widehat{\mu}$ (which is equal to one of the $f_i$), we have 
 \eqn{ 
 \abs{\Lambda_3(f_0,f_1,f_2)} = \abs{\sum_{\eta\in\Z^d} \widehat{f_0}(\eta)\overline{\widehat{f_1}(\eta)}\widehat{f_2}(2\eta)}
 \\\leq&
 \prod_{i=0}^2\norm{\hat{f_i}}_{\ell^{3}\paren{B(0,2^{n+1})\setminus B(0,2^{n-1})}}\leq \prod_{i=0}^{2}\paren{I_{\alpha}(\mu)^{1/3}\norm{\widehat{\mu}(\xi) \abs{\xi}^{d-\alpha}}_{\ell^{\infty}(\abs{\xi}\approx 2^n)}^{\frac{1}{3}}}
 }
 where in the last line we have used Lemma \ref{thm:decayToLq}. Together with \eqref{itm:b} this gives
  \[\abs{\Lambda_3(f_0,f_1,f_2)} \leq C_F I_{\alpha}(\mu) 2^{-\frac{\beta-2(d-\alpha)}{2}n}.\]
  
  This yields that
  \begin{align}&\label{1818}\abs{\norm{\di{\mu}} - \Lambda_3(\phi_N\ast\mu)} \leq 12 C_F I_{\alpha}(\mu) 2^{-(\beta - (d-\alpha))N/2}/\paren{1-2^{-(\beta - 2(d-\alpha))/2}}.\end{align}
  
  Fix a lower bound $\alpha_0$ for $\alpha$ and an upper bound $I_{\alpha_0}$ for $I_{\alpha}(\mu)$ such that the constant $c_0:= c(\sqrt{I_{\alpha_0}}2^{n(d-\alpha_0)})$ coming from Varnavides Theorem is positive. Fix also a lower bound $\beta_0$ for $\beta$. We may suppose that $\alpha_0$ and $\beta_0$ satisfy the condition $ (2\beta_0 + d-\alpha_0)/\beta_0 < 3$. 
  
  Let $N$ be large enough that the expression
  \[12 C_F I_{\alpha_0} 2^{-(\beta_0 - (d-\alpha_0))N/2}/\paren{1-2^{-(\beta_0 - 2(d-\alpha_0))/2}},\]
  which upper-bounds the right-hand-side of \eqref{1818} is strictly less than $c_0$, which is possible since $\beta_0 > 2(d-\alpha_0)$. Choose $\alpha>\alpha_0$ sufficiently close to $d$ that the Varnavides bound $c$ satisfies
  \[ c(\sqrt{I_{\alpha}(\mu)}2^{n(d-\alpha)}) \geq c_0,\]
  which is possible since $c$ is decreasing in its argument.
  Putting these together, we obtain from the reverse triangle inequality that
  \[\norm{\di{\mu}}=\abs{\Lambda_3(\phi_N\ast\mu)- \paren{\Lambda_3(\phi_N\ast\mu)-\norm{\di{\mu}}}}\geq \Lambda_3(\phi_N\ast\mu)-\abs{\norm{\di{\mu}} - \Lambda_3(\phi_N\ast\mu)}> c_0-c_0 = 0\]
  showing that $\norm{\di{\mu}}>0$. 
  
\end{proof}
\begin{remark}
 From the proof we see that we may take $\alpha>d-\frac{\beta}{2}$ provided that for some $N\in\N$ and $c=c(\cdot)$ 
  the lower bound function $c$ coming from Varnavides Theorem \ref{thm:varnThm} satisfies the strict inequality
 \[ c\paren{ \sqrt{I_{\alpha_0}}C_F2^{N(d-\alpha_0)}} > \frac{12C_F I_{\alpha_0}}{\paren{1-2^{-(\beta_0 - 2(d-\alpha_0))/2}}} 2^{-(\beta_0 - (d-\alpha_0))N/2}\]
for data satisfying the following bounds
 \eqn{\alpha\geq \alpha_0
 ,\\&
 I_{\alpha}(\mu)\leq I_{\alpha_0}
 ,\\&
 \sup_{\xi\in\Z^d} \frac{\abs{\widehat{\mu}(\xi)}^2}{\abs{\xi}^{\beta}}\leq C_F.}
\end{remark}

\subsubsection{Non-singularity Lemma}

 We first establish existence of the measure $\di{\mu}$.
 
\begin{lemma}\label{thm:existence}[Existence lemma.]
 Let $\mu$ be a Radon probability measure on $\T^d$. Suppose that $\widehat{\mu}\in \ell^q$ for some $q\leq 3$. 
 Then the measure
 $\di{\mu}$ exists. In particular, if $\mu$ satisfies \eqref{itm:a} and \eqref{itm:b} then this holds when $q=2\paren{\frac{d+\beta-\alpha}{\beta}}$ provided this value is less than or equal to $3$.
\end{lemma}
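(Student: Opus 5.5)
We show that the functionals
\[L_n(g):=\int g(\abs{u})\prod_{i\in[3]}\phi_n\ast\mu(x-iu)\,dx\,du\]
converge as $n\to\infty$ for every $g\in C(\R)$, and that the limit is bounded by $\norm{g}_\infty$ times a constant independent of $n$. The Riesz representation theorem then gives a finite (signed, hence actually positive since each $L_n$ is positive) Radon measure $\di{\mu}$. Since $\supp\mu\subset[0,1]^d$ after the identification with $\T^d$, only $u$ in a compact set contributes, so $C(\R)$ may be replaced by continuous functions on a compact interval.

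\textbf{Step 1: uniform bound.} Write $\mu_n=\phi_n\ast\mu\geq 0$. Since $g$ is continuous on a compact set, $\abs{L_n(g)}\leq\norm{g}_\infty\Lambda_3(\mu_n)$. Fourier expansion gives
\[\Lambda_3(\mu_n)=\sum_\xi \widehat{\mu_n}(\xi)^2\widehat{\mu_n}(-2\xi).\]
H\"older (exactly as in Lemma \ref{thm:L3count}) then gives $\Lambda_3(\mu_n)\leq\norm{\widehat{\mu_n}}_{\ell^3}^3\leq\norm{\widehat{\mu}}_{\ell^3}^3\leq\norm{\widehat{\mu}}_{\ell^q}^3<\infty$. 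The last inequality uses $q\leq 3$ and the fact that $\abs{\widehat{\phi_n}}\leq 1$.

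\textbf{Step 2: Cauchy property.} It suffices to prove convergence for a dense class of $g$, namely those $g$ for which $G(u)=g(\abs{u})$ has $\widehat G\in\ell^1$. The general case then follows by approximation, using the uniform bound of Step 1. For such $g$, write $L_n(g)$ on the Fourier side:
\[L_n(g)=\sum_{\xi,\eta}\widehat G(\xi)\,\widehat{\mu_n}(\eta+\xi)\overline{\widehat{\mu_n}(\eta-\xi)}\widehat{\mu_n}(2\eta),\]
as in the proof of Lemma \ref{thm:fourpolarrep}. Telescope $L_{n+1}(g)-L_n(g)$ as in the proof of Lemma \ref{thm:distancemeasuremass}, into three terms, each containing one factor $(\widehat{\phi_{n+1}}-\widehat{\phi_n})\widehat\mu$. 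For fixed $\xi$, H\"older in $\eta$ bounds each such term by
\[\abs{\widehat G(\xi)}\,\norm{\widehat\mu}_{\ell^3}^2\,\norm{\widehat\mu}_{\ell^3(\abs{\cdot}\gtrsim 2^{n})},\]
with the shifted tail set adjusted by $\abs{\xi}$. Summing over $\xi$ and using dominated convergence, we get $\abs{L_{n+1}(g)-L_n(g)}\to 0$. Even better, for every fixed $m$, $\sup_{k>n}\abs{L_k(g)-L_n(g)}\to0$, because the differences are controlled by the tail $\norm{\widehat\mu}_{\ell^3(\abs{\cdot}\gtrsim 2^{n})}\to 0$, which tends to zero since $\widehat\mu\in\ell^3$. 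To avoid summing a non-summable series, I will compare $L_k$ and $L_n$ directly rather than telescoping step by step: writing $\widehat{\mu_k}-\widehat{\mu_n}=(\widehat{\phi_k}-\widehat{\phi_n})\widehat\mu$, which is supported (essentially) on $\abs{\cdot}\gtrsim 2^{n}$, gives
\[\abs{L_k(g)-L_n(g)}\lesssim\norm{\widehat G}_{\ell^1}\norm{\widehat\mu}_{\ell^3}^2\,\norm{\widehat\mu}_{\ell^3(\abs{\cdot}\gtrsim 2^{n-1})}.\]

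\textbf{Step 3: the special case.} Under \eqref{itm:a} and \eqref{itm:b}, Lemma \ref{thm:decayToLq} gives $\widehat\mu\in\ell^{q}$ with $q=2(d+\beta-\alpha)/\beta$, and the general statement applies once $q\leq3$.

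The main obstacle is Step 2. The approximate identity must satisfy $\abs{\widehat{\phi_n}}\leq1$ and $\widehat{\phi_k}-\widehat{\phi_n}$ must be small on $B(0,2^{n-1})$; this should be arranged by choosing $\widehat{\phi_n}(\xi)=\widehat{\phi}(2^{-n}\xi)$ with $\widehat\phi\equiv1$ near $0$. We also have to justify the passage from the dense class to all of $C(\R)$, which is where the uniform bound of Step 1 is used.
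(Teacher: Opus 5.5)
Your proposal is correct and is essentially the paper's argument: both write the mollified triple integral on the Fourier side, get the uniform bound $\Lambda_3(\phi_n\ast\mu)\le\norm{\widehat{\mu}}_{\ell^3}^3\le\norm{\widehat{\mu}}_{\ell^q}^3$ from H\"older, prove convergence on a dense class, and conclude by Riesz representation. The one structural difference is that the paper tests against trigonometric monomials in $(x,u)$ to build a measure on $\T^d\times\T^d$ and pushes it forward under $(x,u)\mapsto\abs{u}$, which makes density automatic, whereas you test directly against radial $G(u)=g(\abs{u})$ with $\widehat{G}\in\ell^1$; that class is indeed dense, for example via polynomials in $\abs{u}^2$, since $\abs{u_i}_{\T}^2$ has Fourier coefficients $O(k^{-2})$.

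One caution. A nonnegative integrable $\phi$ with $\widehat{\phi}(0)=1$ has $\widehat{\phi}(\xi)\neq 1$ for every $\xi\neq 0$, so your suggested choice $\widehat{\phi}\equiv 1$ near $0$ is incompatible with the positivity $\phi_n\ast\mu\geq 0$ that Step 1 relies on. Keep $\phi_n\geq 0$ instead: then $\norm{(\widehat{\phi_k}-\widehat{\phi_n})\widehat{\mu}}_{\ell^3}\to 0$ follows from $\abs{\widehat{\phi_n}}\le 1$, $\widehat{\phi_n}\to 1$ pointwise, and dominated convergence, which is all Step 2 needs.
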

\begin{proof}
 Let $(\phi_n)$ be an approximate identity. By the Riesz representation theorem, it is sufficient 
 to show that the limit of the triple-convolution integrals converge for every test function $g$ in a dense subclass of $C(\R)$ 
 in order to define a finite Radon measure; this defines $\delta(\mu)$.
 
 Thus, that $\di{\mu}$ exists follows from a number of observations: the absolute convergence of the sums
 \[ \sum_{\xi\in\Z^d}\abs{\widehat{\phi_n\ast\mu}(\xi)}^2\abs{\widehat{\phi_n\ast\mu}(-2\xi)}\] and more generally for any fixed $\eta_0,\eta_1$ of
 \[  \sum_{\xi\in\Z^d}\abs{\widehat{\phi_n\ast\mu}(\xi-\eta_0)}\abs{\widehat{\phi_n\ast\mu}(\xi-\eta_1)}\abs{\widehat{\phi_n\ast\mu}(-2\xi)},\]
 the observation that this convergence guarantees the existence of the limits
 \eq{\label{repby1}\lim_{n\ra\infty} \iint f(x,r)\prod_{i=0}^2\phi_n\ast\mu(x-ir)\,dx\,dr}
 for any trigonometric monomial $f(x,r)=e^{2\pi i (x\cdot(\eta_0-\eta_1)+r\cdot\eta_1)}$, so for any trigonometric polynomial $f$, hence for any continuous function $f$, and the existence of a measure $\cap^3\mu$ on $\T^d\times\T^d$ represented by the action on $C(\T^d)$ given by \eqref{repby1}, and finally by the observation that for a function $g$ on $\R$, 
 \[\int_0^{\infty} g(r)\,\di{\mu}(r) = \int g(\abs{u})\,d\cap^3\mu(x,u).\]
\end{proof}

The following technical estimates will be used in proving the Non-singularity lemma.

 \begin{lemma}\label{thm:mattilaLInfintymain}
 
  If $\rho>d(1-1/q)$ then
    \[\int r^{d-1-\rho}\Si{\mu}(r)\,dr\lesssim \norm{\widehat{\mu}}_{L^q}^3.\]
 In particular, if $\mu$ sastisfies \eqref{itm:b} and $\rho>(4d-3\beta)/2$ then
  \[\int r^{d-1-\rho}\Si{\mu}(r)\,dr\lesssim 1.\]
 \end{lemma}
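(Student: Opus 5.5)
Write $\varsigma(r)=\int\int \abs{\widehat{\mu}(2\eta)}\abs{\widehat{\mu}(\eta-r\theta)}\abs{\widehat{\mu}(\eta+r\theta)}\,d\sigma(\theta)\,d\eta$. I will recombine the radial integral with the spherical one into a single integral over $\xi=r\theta\in\R^d$. Since $d\xi=r^{d-1}\,dr\,d\sigma(\theta)$, Fubini/Tonelli (everything is nonnegative) gives
\[\int r^{d-1-\rho}\varsigma(r)\,dr=\iint \abs{\xi}^{-\rho}\abs{\widehat{\mu}(2\eta)}\abs{\widehat{\mu}(\eta-\xi)}\abs{\widehat{\mu}(\eta+\xi)}\,d\xi\,d\eta .\]
The plan is to estimate this trilinear form by H\"older, with the weight $\abs{\xi}^{-\rho}$ absorbed into a Lebesgue norm. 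Substituting $a=\eta-\xi$, $b=\eta+\xi$ (Jacobian constant), we have $2\eta=a+b$ and $2\xi=b-a$, so the integral is comparable to
\[\iint \abs{a-b}^{-\rho}\,\abs{\widehat{\mu}(a)}\abs{\widehat{\mu}(b)}\abs{\widehat{\mu}(a+b)}\,da\,db .\]

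First I split $\abs{\xi}^{-\rho}$ into the region $\abs{\xi}\le 1$ and the region $\abs{\xi}>1$. The local part is handled by local integrability ($\rho<d$ is implicit, or one uses the boundedness of $\widehat{\mu}$ on $\T^d$ after discretizing). The main term has $\abs{\xi}>1$. Here I freeze $\xi$ and apply H\"older in $\eta$ with three exponents $q$-type. The constraint $\rho>d(1-1/q)$ is exactly the condition for $\abs{\xi}^{-\rho}\mathbf 1_{\abs{\xi}>1}\in L^{p}$ with $p=q/(q-1)$, the dual exponent of $q$, since $\rho p>d$. This suggests the following route. Fix $\eta$ and apply H\"older in $\xi$ with exponents $p$ and $q$ to get
\[\int \abs{\xi}^{-\rho}\abs{\widehat{\mu}(\eta-\xi)}\abs{\widehat{\mu}(\eta+\xi)}\,d\xi\lesssim \norm{\widehat{\mu}(\eta-\cdot)\widehat{\mu}(\eta+\cdot)}_{L^{q}} .\]
Integrating against $\abs{\widehat{\mu}(2\eta)}$ then requires an $L^{q'}_\eta$ bound on $\eta\mapsto\norm{\widehat{\mu}(\eta-\cdot)\widehat{\mu}(\eta+\cdot)}_{L^q}$. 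The product structure makes this a convolution-type quantity ($\abs{\widehat{\mu}}^q\ast\abs{\widehat{\mu}}^q$ evaluated at $2\eta$), which Young's inequality controls. I will tune the exponents, possibly interpolating using $\norm{\widehat{\mu}}_{L^\infty}\le 1$ for a probability measure, so that the final bound is $\norm{\widehat{\mu}}_{L^q}^3$. This is homogeneous of the correct degree, and the bound $\abs{\widehat{\mu}}\le1$ allows lowering exponents where needed.

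For the "in particular" clause, Lemma \ref{thm:decayToLq}-type estimates with \eqref{itm:b} give $\widehat{\mu}\in L^q$ with norm bounded by a constant for suitable $q$. Here I use \eqref{itm:b} alone: $\abs{\widehat{\mu}(\xi)}^q\lesssim\abs{\xi}^{-q\beta/2}$ is integrable once $q>2d/\beta$. I then match $d(1-1/q)$ against $(4d-3\beta)/2$ by choosing $q$ appropriately. If this matching fails, I will instead run the H\"older argument directly with pointwise decay on two factors and the $L^2$ (energy) bound on the third.

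The main obstacle is the middle step, namely making the H\"older/Young exponents close exactly to the condition $\rho>d(1-1/q)$ with three copies of $\norm{\widehat{\mu}}_{L^q}$. I will try the change of variables $(a,b)$ first, together with the Hardy--Littlewood--Sobolev-type bound for the kernel $\abs{a-b}^{-\rho}$ restricted to $\abs{a-b}>1$. Since this kernel lies in $L^{p}$ with $p=q'$, Young's inequality should give the trilinear estimate.
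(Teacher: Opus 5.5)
\textbf{There is a genuine gap, and it sits exactly in the middle step you flagged.} Your reduction to $\iint \abs{\xi}^{-\rho}\,\abs{\widehat{\mu}(2\eta)\widehat{\mu}(\eta-\xi)\widehat{\mu}(\eta+\xi)}\,d\xi\,d\eta$ is the same as the paper's. Splitting off $\abs{\xi}\le 1$ is genuinely needed: the paper bounds by $\norm{\abs{\xi}^{-\rho}}_{L^p(\mathbb{R}^d)}$, which is infinite for every $p$. The trouble is that no tuning of exponents will close the middle step.

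After H\"older in $\xi$ you need $\eta\mapsto(\abs{\widehat{\mu}}^q\ast\abs{\widehat{\mu}}^q)(2\eta)^{1/q}$ in $L^{q'}$. That is, you need $\abs{\widehat{\mu}}^q\ast\abs{\widehat{\mu}}^q\in L^{1/(q-1)}$, an exponent below $1$ when $q>2$, where Young's inequality is unavailable.

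More fundamentally, consider a form $\iint K(\xi)f(\eta-\xi)g(\eta+\xi)h(2\eta)\,d\xi\,d\eta$ on $\mathbb{R}^{2d}$. It can be bounded by $\norm{K}_{L^p}\norm{f}_{L^q}\norm{g}_{L^q}\norm{h}_{L^q}$ only under the Brascamp--Lieb scaling condition $1/p+3/q=2$, which is the relation the paper uses. With $p=q'$ this forces $q=2$.

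The bound $\abs{\widehat{\mu}}\le 1$ does not rescue it. Take $\mu_R=R^d\phi(R\,\cdot)$ with $\phi\ge 0$ smooth, of mass one, and supported in $[0,1]^d$. Rescaling $(\xi,\eta)\mapsto R(\xi,\eta)$ gives
\[
\int r^{d-1-\rho}\varsigma(\mu_R)(r)\,dr=C_\phi R^{2d-\rho}\quad(0<\rho<d),\qquad \norm{\widehat{\mu_R}}_{L^q}^3=c_\phi R^{3d/q}.
\]
So any such estimate forces $\rho\ge d(2-3/q)$, which is strictly larger than $d(1-1/q)$ when $q>2$. The first display is therefore false for $d(1-1/q)<\rho<d(2-3/q)$. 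It cannot be proved at the stated threshold by your route or any other.

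\textbf{What the paper's argument actually gives.} Its Brascamp--Lieb step delivers the threshold $\rho>d/p=d(2-3/q)$ with $p=q/(2q-3)$, admissible for $q\le 3$. Its concluding ``$\rho>d(1-1/q)=d/q'$'' does not follow from its own exponent relation.

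A correct argument along your lines works for $q\in(2,3]$ and $d(2-3/q)<\rho<d$:
\begin{itemize}
\item treat $\abs{\xi}>1$ by Brascamp--Lieb, using $\abs{\xi}^{-\rho}\mathbf{1}_{\abs{\xi}>1}\in L^p$;
\item treat $\abs{\xi}\le 1$ by $\norm{\widehat{\mu}}_{L^3}^3\le\norm{\widehat{\mu}}_{L^q}^q\lesssim\norm{\widehat{\mu}}_{L^q}^3$, using $\abs{\widehat{\mu}}\le 1$ and $\norm{\widehat{\mu}}_{L^q}\gtrsim 1$ for a probability measure on $[0,1]^d$.
\end{itemize}

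This is also the threshold that makes the second clause match exactly, as in the paper. Under \eqref{itm:b}, $\widehat{\mu}\in L^q$ for every $q>2d/\beta$, and $d(2-3/q)\downarrow(4d-3\beta)/2$ as $q\downarrow 2d/\beta$. So the ``in particular'' clause does hold for $(4d-3\beta)/2<\rho<d$. However, your plan to obtain it from the first clause at threshold $d(1-1/q)$ rests on an estimate that is false.
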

 \begin{proof}
  Suppose first that $\widehat{\mu}\in L^1$. Converting from polar coordinates we have
 \begin{align*}&\notag{}
 \int r^{d-1-\rho}\Si{\mu}(r)\,dr
 \\\leq& 
 \int\int \abs{\xi}^{-\rho} \abs{\widehat{\mu}(\xi+\eta)\overline{\widehat{\mu}(\xi-\eta)}\widehat{\mu}(\eta)}\,d\xi\,d\eta.
 \end{align*}
 Using the Brascamp-Lieb inequality we have that this is bounded above by
 \begin{align*}&
\leq \norm{\abs{\xi}^{-\rho}}_{L^p} \norm{\widehat{\mu}}_{L^q}^3.
 \end{align*}
The first statement of the result then follows upon taking $\rho>d(1-1/q) = d/q'$ and applying a limiting argument to the measures $\phi_n\ast\mu$.
 Using the Fourier decay of $\mu$, if $1/p+3/q = 2$, $q>2d/\beta$, and $p>d/\rho$ this is finite precisely when $\rho> (4d-3\beta)/2$, proving the second statement.
 \end{proof}

\begin{prop}\label{thm:distancemeasureLInftyBound-function}
 Suppose 
$f\in C(\R^d)$ is a positive function with $\hat{f}\in L^1$. Fix $\rho\in(0,d-1)$. Then 
 \[\di{f}(s)\lesssim s^{d-1-\rho} \int_0^{\infty} r^{d-1-\rho}\abs{\si{f}(r)}\,dr.\]
\end{prop}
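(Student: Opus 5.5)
The plan is to follow Mattila's pointwise argument for the distance measure, using the Bessel representation of Lemma \ref{thm:fourpolarrep} for $\di{f}$. Since $\hat f\in L^1$, the second part of that lemma gives
\[
\di{f}(s)=O(1)\,s^{d/2}\int_0^\infty r^{d/2}J(sr)\,\si{f}(r)\,dr .
\]
Writing $s^{d/2}r^{d/2}=s^{d-1-\rho}r^{d-1-\rho}(sr)^{\rho+1-d/2}$, we get
\[
\abs{\di{f}(s)}\lesssim s^{d-1-\rho}\int_0^\infty r^{d-1-\rho}\,\abs{K_\rho(sr)}\,\abs{\si{f}(r)}\,dr,
\qquad K_\rho(t):=t^{\rho+1-d/2}J(t).
\]
So it suffices to bound $K_\rho$, or a substitute for it, uniformly in $t>0$.

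\textbf{Small $t$.} For $t\le 1$ I use the bound $\abs{J(t)}\lesssim t^{(d-2)/2}$ from Lemma \ref{thm:bessel}. This gives $\abs{K_\rho(t)}\lesssim t^{\rho}\le 1$, and here the hypothesis $\rho>0$ is used.

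\textbf{Large $t$, easy range.} For $t\ge1$ the decay bound $\abs{J(t)}\lesssim t^{-1/2}$ gives $\abs{K_\rho(t)}\lesssim t^{\rho+\frac12-\frac d2}$. This is bounded as long as $\rho\le (d-1)/2$, which settles that range directly.

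\textbf{Large $t$, range $(d-1)/2<\rho<d-1$ (main obstacle).} Here the crude absolute bound on $J$ is too weak, and I will exploit positivity of $f$ together with the oscillation of $J$.

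1. Since $f\ge0$ is continuous, $\di{f}$ is a nonnegative continuous density. It is therefore enough to bound the local averages $\int \psi_\epsilon(r-s)\,\di{f}(r)\,dr$ uniformly as $\epsilon\to0$, where $\psi_\epsilon$ is a nonnegative bump of width $\epsilon\approx s$ times a small constant.
2. Apply the first (integrated) part of Lemma \ref{thm:fourpolarrep} with $g=\psi_\epsilon(\cdot-s)$. The Bessel factor $J(sr)$ is then replaced by the Hankel-type transform $\int r'^{d/2}J(r'\varrho)\psi_\epsilon(r'-s)\,dr'$ of the bump.
3. Using the asymptotic $J(t)=c\,t^{-1/2}\cos(t-\varphi)+O(t^{-3/2})$ and integrating by parts in $r'$, this transform decays like $(s\varrho)^{-1/2}(\epsilon\varrho)^{-m}$ for any $m$ once $\varrho\gtrsim\epsilon^{-1}$. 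For $\varrho\lesssim\epsilon^{-1}\approx s^{-1}$ we are effectively in the regime $s\varrho\lesssim1$, where the small-$t$ estimate applies.
4. Both regimes then produce the kernel bound $\lesssim s^{d-1-\rho}\varrho^{d-1-\rho}$ for every $\rho<d-1$. Letting the averaging parameter shrink via continuity of $\di{f}$ (which holds since $\hat f\in L^1$ makes $\si{f}$ bounded) recovers the pointwise statement.

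The delicate point is step 3: the gain must come from cancellation rather than from absolute values. The condition $\rho<d-1$ is exactly what keeps the weight $\varrho^{d-1-\rho}$ from degenerating, and matches the condition $\rho\in(0,d-1)$ in the proposition.
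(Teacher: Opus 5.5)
You have a genuine gap in the range $(d-1)/2<\rho<d-1$, and it cannot be closed there because the inequality is false in (at least part of) that range.

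Your small-$t$ step and your large-$t$ step for $\rho\le(d-1)/2$ are correct. Together they are exactly the paper's argument: split at $sr=1$ and use $|J(t)|\lesssim\min(t^{(d-2)/2},t^{-1/2})$. However, the paper's estimate of the piece $sr\ge1$ ends by replacing $(rs)^{\rho}$ by $1$ on $\{rs\ge1\}$, which goes the wrong way. Done correctly, that computation proves the bound only for $0<\rho\le(d-1)/2$, i.e.\ exactly your ``easy range''. So the paper gives no valid argument for $(d-1)/2<\rho<d-1$ either.

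The gap in your treatment of that range is in Steps 3--4. You average at width $\epsilon=\kappa s$, and Step~3 then treats $\varrho\lesssim\epsilon^{-1}$ as the regime $s\varrho\lesssim1$. On the intermediate range $s^{-1}\le\varrho\le(\kappa s)^{-1}$ neither estimate applies. The bump gives no decay because $\epsilon\varrho\lesssim1$, and $|J(t)|\lesssim t^{-1/2}$ only gives the kernel bound
\[
s^{\frac{d-1}{2}}\varrho^{\frac{d-1}{2}}=(s\varrho)^{\rho-\frac{d-1}{2}}\,s^{d-1-\rho}\varrho^{d-1-\rho},
\]
which is a loss of up to $\kappa^{-(\rho-(d-1)/2)}$. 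What your argument actually yields is a bound for averages of $\delta(f)$ over intervals of length comparable to $s$. Step~4 (letting $\kappa\to0$) gives up exactly this, and continuity of $\delta(f)$ is only qualitative, so it cannot restore uniformity.

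Here is a counterexample showing the inequality fails. Take $d=2$, a fixed Gaussian $\chi$ on $\mathbb{R}$, an $L^1$-normalised Gaussian $\psi_\tau$ of width $\tau$, $g=\sum_{j=0}^{2}\psi_\tau(\cdot-j)$, and $f(x)=g(x_1)\chi(x_2)$.

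\emph{Lower bound for the left side.} The function $F(u)=\int f(x)f(x-u)f(x-2u)\,dx$ factors as $G(u_1)X(u_2)$. The term with $x_1\approx2$, $x_1-u_1\approx1$, $x_1-2u_1\approx0$ alone gives $F\gtrsim\tau^{-2}$ on $\{|u_1-1|\le\tau/2,\ |u_2|\le1\}$. This set meets the unit circle in an arc of length $\approx\sqrt{\tau}$, so $\delta(f)(1)=\int_{|u|=1}F\,d\mathcal{H}^1\gtrsim\tau^{-3/2}$.

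\emph{Upper bound for the right side.} The $\eta$-integral defining $\sigma(f)$ factorises: $\sigma(f)(r)=\int_{\mathbb{S}^1}A(r\theta_1)Y(r\theta_2)\,d\sigma(\theta)$, where $A(a)=\int\hat g(2t)\hat g(t-a)\overline{\hat g(t+a)}\,dt$ and $Y$ is the analogue for $\hat\chi$. Then $|A(a)|\lesssim\tau^{-1}(1+\tau|a|)^{-N}$ and $|Y(b)|\lesssim(1+|b|)^{-N}$, so
\[
|\sigma(f)(r)|\lesssim\tau^{-1}\bigl(\min(1,r^{-1})(1+\tau r)^{-N}+(1+r)^{-N}\bigr),
\qquad
\int_0^\infty r^{1-\rho}|\sigma(f)(r)|\,dr\lesssim\tau^{\rho-2}\quad(\rho<1).
\]

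Since $\tau^{-3/2}\gg\tau^{\rho-2}$ when $\rho>1/2$, the proposition fails for every $\rho\in(1/2,1)$ in $d=2$. The same construction with $d-1$ thin directions (three $\tau$-bumps at $0,e_1,2e_1$ in $\mathbb{R}^{d-1}$, times a fixed Gaussian in $x_d$) breaks it for $\rho\in(d-\tfrac32,d-1)$ in every dimension.

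So the statement that is actually provable is the one your first two steps give, $0<\rho\le(d-1)/2$. Be aware that the non-singularity lemma later applies this proposition with $\rho>d(1-1/q)\ge d/2$, which lies outside that range.
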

\begin{proof}
 By Lemma \ref{thm:fourpolarrep} we have
 \[\di{f}(r) =  O(1) r^{d/2} \int_0^{\infty} s^{d/2}J(rs)\si{f}(s)\,ds.\]
 Split this integral as  
  \[\di{f}(r) \lesssim r^{d/2} \paren{\int_0^{1/r} + \int_{1/r}^{\infty}}s^{d/2}J(rs)\abs{\si{f}(s)}\,ds =: \di{f}_1(r)+\di{f}_2(r).\]
  Then using Lemma \ref{thm:bessel}
 \eqn{
  \di{f}_1(r) 
  \\\lesssim&
  r^{d/2} \int_0^{1/r}(sr)^{(d-2)/2} r^{d/2}\abs{\si{f}(s)}\,ds
  \\=&
  r^{d-1}\int_0^{1/r} s^{d-1}\abs{\si{f}(s)}\,ds
  \\=&
    r^{d-1}\int_0^{1/r} s^{\rho} s^{d-1-\rho}\abs{\si{f}(s)}\,ds
  \\\leq&
    r^{d-1-\rho}\int_0^{1/r} s^{d-1-\rho}\abs{\si{f}(s)}\,ds.
  }
%
Similarly,
\eqn{
  \di{f}_2(r) 
  \\\lesssim& 
  r^{d/2} \int_{1/r}^{\infty}(sr)^{-1/2} r^{d/2}\abs{\si{f}(s)}\,ds
  \\=&
   \int_{1/r}^{\infty}(sr)^{(d-1)/2} \abs{\si{f}(s)}\,ds
  \\=&
   \int_{1/r}^{\infty} r^{(d-1)/2} s^{\rho-(d-1)/2} s^{d-1-\rho}\abs{\si{f}(s)}\,ds
   \\\leq&
   \int_{1/r}^{\infty} r^{d-1} s^{\rho} s^{d-1-\rho}\abs{\si{f}(s)}\,ds
   \\\leq&
   r^{d-1-\rho}\int_{1/r}^{\infty}  (rs)^{\rho} s^{d-1-\rho}\abs{\si{f}(s)}\,ds
   \\\leq&
   r^{d-1-\rho}\int_{1/r}^{\infty} s^{d-1-\rho}\abs{\si{f}(s)}\,ds.  
}

\end{proof}

\begin{lemma}\label{thm:distancemeasureLInftyBound}[Non-singularity lemma.]
 Let $\mu$ be a Radon probability measure on $\R^d$. Suppose that either $\mu$ satisfies \eqref{itm:b} or that $\widehat{\mu}\in L^q$. Let $d-1>(4d-3\beta)/2$ or $d-1>d(1-1/q)$, respectively. Then the measure
 $\di{\mu}$ exists, and further $\di{\mu}\in L^{\infty}$.
\end{lemma}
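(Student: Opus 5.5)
The plan is to follow Mattila's scheme for the distance measure. Regularize $\mu$ by the approximate identity $(\phi_n)$, prove a bound on the density of $\di{\phi_n\ast\mu}$ that is uniform in $n$, and then pass to the limit.

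\textbf{Existence.} When $\widehat{\mu}\in\ell^q$ with $q\leq 3$, existence of $\di{\mu}$ is exactly Lemma \ref{thm:existence}. Under \eqref{itm:b}, the same lemma applies through Lemma \ref{thm:decayToLq}, provided the exponent $2(\beta+d-\alpha)/\beta$ is at most $3$. Otherwise I would rerun the argument of Lemma \ref{thm:existence} directly, using absolute convergence of $\Si{\mu}$-type sums. So the real content is the $L^\infty$ bound.

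\textbf{Uniform bound for mollified measures.} Fix $n$ and set $f_n=\phi_n\ast\mu$. This is a positive continuous function with $\widehat{f_n}\in L^1$, since $\widehat{\phi_n}$ is compactly supported. Proposition \ref{thm:distancemeasureLInftyBound-function} applies with the choice $\rho=d-1$ (or $\rho$ slightly below $d-1$, if the open range in that proposition must be respected). The factor $s^{d-1-\rho}$ is then $O(1)$, and I get
\[\di{f_n}(s)\lesssim \int_0^\infty r^{d-1-\rho}\abs{\si{f_n}(r)}\,dr\leq \int_0^\infty r^{d-1-\rho}\Si{f_n}(r)\,dr.\]
The hypothesis $d-1>d(1-1/q)$, respectively $d-1>(4d-3\beta)/2$, is exactly what permits choosing $\rho$ close enough to $d-1$ that $\rho>d(1-1/q)$, respectively $\rho>(4d-3\beta)/2$. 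Lemma \ref{thm:mattilaLInfintymain} then bounds the right-hand side by $\norm{\widehat{f_n}}_{L^q}^3\leq\norm{\widehat{\mu}}_{L^q}^3$, respectively by a constant depending only on $C_F$ and $\beta$. Here I use $\abs{\widehat{\phi_n}}\leq 1$, so that \eqref{itm:b} and the $\ell^q$ bound pass to $f_n$ with the same constants. This yields $\sup_n\norm{\di{f_n}}_{L^\infty}\leq K<\infty$.

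\textbf{Passing to the limit.} For $g\in C_c(\R)$, the definition of $\di{\mu}$ gives $\int g\,d\di{\mu}=\lim_n\int g\,\di{f_n}\,dr$. Hence $\abs{\int g\,d\di{\mu}}\leq K\norm{g}_{L^1}$. By density of $C_c$ in $L^1$ and Riesz representation (the duality $(L^1)^*=L^\infty$), $\di{\mu}$ is absolutely continuous with density bounded by $K$. Equivalently, one can take a weak-$*$ limit of the uniformly bounded densities $\di{f_n}$.

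\textbf{Main obstacle.} The delicate point is the endpoint choice of $\rho$ in Proposition \ref{thm:distancemeasureLInftyBound-function}. The $s^{d-1-\rho}$ prefactor blows up or decays at $0$ and $\infty$ unless $\rho=d-1$. If I am forced to take $\rho<d-1$, I would instead obtain a weighted bound $s^{-(d-1-\rho)}\di{\mu}(s)\in L^\infty$. This still gives local boundedness away from the origin and local integrability near $0$, which suffices on the bounded support of $\di{\mu}$ (contained in $[0,\sqrt d]$) once I check the sign convention of the exponent in the proposition. A secondary check is that the Brascamp--Lieb step in Lemma \ref{thm:mattilaLInfintymain} is applied uniformly to the $f_n$.
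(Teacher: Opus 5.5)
Your plan follows the paper's own proof: mollify to $f_n=\phi_n\ast\mu$, combine Proposition \ref{thm:distancemeasureLInftyBound-function} with Lemma \ref{thm:mattilaLInfintymain} to bound the density of $\delta(f_n)$ uniformly in $n$, and pass to the limit by $L^1$--$L^\infty$ duality. You are more careful than the paper about the prefactor $s^{d-1-\rho}$, which the paper silently drops when it writes $\sup_{n,s}$. For $0<\rho<d-1$ that factor is small near $0$ and grows at infinity, the reverse of what you wrote, but on the bounded support of $\delta(\mu)$ this is harmless. A minor point: in the branch where only (b) is assumed you cannot go through Lemma \ref{thm:decayToLq}, which needs (a). Existence follows directly instead, since $3\beta>2d+2$ gives $\widehat{\mu}\in\ell^{q}$ for every $q\in(2d/\beta,3]$, so Lemma \ref{thm:existence} applies. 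The real problem is shared with the paper. It sits at the step you flagged as the main obstacle, but not for the reason you give.

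Proposition \ref{thm:distancemeasureLInftyBound-function} is not established for $\rho$ near $d-1$. In its proof, the bound $\abs{J(t)}\lesssim t^{-1/2}$ correctly gives
\[
\delta(f)_2(r)\lesssim\int_{1/r}^{\infty}(rs)^{\frac{d-1}{2}}\abs{\sigma(f)(s)}\,ds .
\]
The last line, however, uses $(rs)^{\rho}\leq 1$ on $\{s\geq 1/r\}$, where in fact $(rs)^{\rho}\geq 1$. Since $(rs)^{(d-1)/2}\leq (rs)^{d-1-\rho}$ on $\{rs\geq 1\}$ only when $\rho\leq (d-1)/2$, the proposition is justified only for $\rho\in(0,(d-1)/2]$. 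No argument that uses only the Hankel relation of Lemma \ref{thm:fourpolarrep} and $\abs{\sigma(f)}$ can do better. Take $F$ a smooth radial bump of height $1$ on the shell $\{\abs{\abs{u}-1}<\epsilon\}$. Then $r^{d-1}\int_{\mathbb{S}^{d-1}}F(r\theta)\,d\sigma(\theta)\approx 1$ at $r=1$. On the other hand, the spherical averages of $\widehat{F}$ satisfy $\abs{\sigma(s)}\lesssim \epsilon\min(1,s^{-(d-1)/2})$ for $s\lesssim\epsilon^{-1}$ and decay rapidly beyond. Hence $\int_0^\infty s^{d-1-\rho}\abs{\sigma(s)}\,ds\lesssim \epsilon^{\min(1,\rho-(d-1)/2)}\log(1/\epsilon)$, which tends to $0$ when $\rho>(d-1)/2$. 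This matches Mattila's distance measure, where a pointwise bound comes from the $\abs{\xi}^{-(d-1)/2}$ decay of $\widehat{\sigma_r}$, i.e.\ the weight $\rho=(d-1)/2$. But Lemma \ref{thm:mattilaLInfintymain} needs $\rho>d(1-1/q)\geq d/2>(d-1)/2$ for $q\geq 2$. So in the $\widehat{\mu}\in L^q$ case no admissible $\rho$ exists. In case (b) the two ranges meet only under the stronger condition $(4d-3\beta)/2<(d-1)/2$, i.e.\ $\beta>d+\frac{1}{3}$. To close the argument along these lines you would need a uniform bound on $\iint\abs{\xi}^{-(d-1)/2}\abs{\widehat{\mu}(2\eta)\widehat{\mu}(\eta-\xi)\widehat{\mu}(\eta+\xi)}\,d\xi\,d\eta$, which the stated hypotheses do not supply.
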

\begin{proof}

 Existence follows from Lemma \ref{thm:existence}.
Let $f_n = \phi_n\ast\mu$, and let $\rho$ satisfy either $d-1>\rho>(4d-3\beta)/2$ or $d-1>\rho>d(1-1/q)$ as appropriate. By Lemma \ref{thm:mattilaLInfintymain} and the hypotheses on $\mu$ and $\rho$,
 \[\int_0^{\infty} r^{d-1-\rho}\abs{\si{f_n}(r)}\,dr < \infty\]
 uniformly in $n$, so by Lemma \ref{thm:distancemeasureLInftyBound-function} 
 \[\sup_{n,s}\di{f_n}(s) <\infty.\]
 
 Noting also that for functions $g$ and $f$ with $\hat{g},\hat{f}\in L^1$ 
 \[\int g(s)\,\di{f}(s) = \iint g(|r|)\prod_{i=0}^2f(x-ir)\,dx\,dr,\]
 we have by the definition of the measure $\di{\mu}$ that for any $g\in L^1(\R)$ with $\hat{g}\in L^1$
 \[\int_0^{\infty} g\, \di{\mu} = \lim_{n\ra\infty}\int  g(|r|) \prod_{i=0}^2f_n(x-ir)\,dx\,dr  = \lim_{n\ra\infty} \int g(s) \delta\paren{f_{n}}(s)\,ds\lesssim\int \abs{g(s)}\,ds = \norm{g}_{L^1}\]
 and so $\di{\mu}\in L^{\infty}$.
\end{proof}

We also include here the following weaker result, valid when $d=1$.

\begin{lemma}\label{thm:onedimensionalnonsingularity}
  Let $\mu$ be a Radon probability measure on $[0,1]^{d}$ and assume
  $\widehat{\mu}\in L^{q}(\mathbb R^{d})$ for some $2<q\le 3$.
  Then the $3$–AP length measure $\nu=\delta(\mu)$ exists and
  
  \[
    \mathcal H^{s}\!\bigl(\triangle^{3}(\mu)\bigr)
    \;\;\gtrsim\;\;
    \|\nu\|
  \qquad
    \text{for every }s\le 1\text{ with }
    s<\frac12+\frac{d\bigl(6-2q\bigr)}{2q}.
  \]
  In particular, if\/ $\mu$ satisfies \eqref{itm:a}–\eqref{itm:b} then the
  conclusion holds with $q=2\bigl(d+\beta-\alpha\bigr)/\beta$.
\end{lemma}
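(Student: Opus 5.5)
The approach is the standard Mattila energy argument, run on the $3$AP length measure $\nu=\di{\mu}$ instead of on a distance measure. First, existence of $\nu$ follows from Lemma \ref{thm:existence}, since $q\le 3$. By Lemma \ref{thm:distancemeasuresupport}, $\nu$ is supported in $\triangle^3(\supp\mu)$. The conclusion will then follow from Frostman's lemma (mass distribution principle) once we show that the $s$-energy
\[
I_s(\nu)=\iint |r-r'|^{-s}\,d\nu(r)\,d\nu(r')
\]
is finite, with a bound depending only on $\norm{\widehat{\mu}}_{L^q}$. Indeed, the standard inequality $\mathcal H^s_\infty(\supp\nu)\geq \norm{\nu}^2/I_s(\nu)$ then gives $\mathcal H^s(\triangle^3(\mu))\gtrsim\norm{\nu}$, with the implicit constant depending on $M$ (the normalization we can afford, since $\norm{\nu}\le\Lambda_3$-type quantities are bounded).

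To bound $I_s(\nu)$, write it on the Fourier side in one dimension, $I_s(\nu)\approx\int|\widehat{\nu}(t)|^2|t|^{s-1}\,dt$. Compute $\widehat{\nu}(t)$ by testing against $g(r)=e^{-2\pi i rt}$ in the definition of $\nu$; this tests the radial function $G(u)=g(|u|)$ against the triple product of mollified measures. As in Lemma \ref{thm:fourpolarrep}, this gives
\[
\widehat{\nu}(t)=\lim_n\iint \widehat{G}(\xi)\,\widehat{f_n}(\eta+\xi)\overline{\widehat{f_n}(\eta-\xi)}\widehat{f_n}(2\eta)\,d\xi\,d\eta ,
\]
where $f_n=\phi_n\ast\mu$. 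Here $\widehat{G}$ is the Fourier transform of the surface measure $r^{d-1}\sigma_{rS^{d-1}}$ integrated against $e^{-2\pi i rt}$. Equivalently, $\widehat{\nu}(t)$ is an average over spheres of radius $|\xi|$ of $\si{f_n}$ weighted by the one-dimensional transform in $r$. The plan is to use Plancherel in the radial variable $r$ to rewrite
\[
\int|\widehat{\nu}(t)|^2|t|^{s-1}\,dt \;\lesssim\; \int r^{d-1}\,|\si{\mu}(r)|^2\, r^{s-1}\,(\cdots)\,dr ,
\]
in the form $\int \Si{\mu}(r)^2 r^{2(d-1)+s-d}\,dr$ after decoupling. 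This mirrors Mattila's identity $I_s(\delta^{[1]}\mu)\approx\int \sigma(\mu)(r)^2 r^{d-1+s-d}$... (adapted). We then bound $\sup_r r^{a}\Si{\mu}(r)$ and $\int r^{b}\Si{\mu}(r)\,dr$ separately. The integral part comes from Lemma \ref{thm:mattilaLInfintymain}, which controls $\int r^{d-1-\rho}\Si{\mu}$ whenever $\rho>d(1-1/q)$. The sup part comes from Hölder in $\eta$ and on the sphere: $\Si{\mu}(r)\le\norm{\widehat{\mu}}_{L^q}^3$ times a volume factor in $r$ whose exponent is dictated by $1/q$ counting.

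The main obstacle is matching the exponents. Multiplying the two bounds requires $s-1+\text{(sup exponent)}\le d-1-\rho$ with $\rho>d(1-1/q)$. I expect the bookkeeping to yield exactly $s<\tfrac12+\tfrac{d(6-2q)}{2q}=\tfrac12+d\bigl(\tfrac3q-1\bigr)$. The $\tfrac12$ comes from the Bessel decay $|J|\lesssim|\cdot|^{-1/2}$ in Lemma \ref{thm:bessel}, which makes the one-dimensional energy integral converge. The $d(3/q-1)$ comes from the Hölder/Brascamp--Lieb gain over $\ell^3$, namely $3/q-1$ per dimension. If the direct Plancherel route fails to decouple, I would fall back on the pointwise form of Lemma \ref{thm:fourpolarrep}. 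That means splitting $\nu(r)$ at $s=1/r$ exactly as in Proposition \ref{thm:distancemeasureLInftyBound-function}, but with $\rho$ allowed to exceed $d-1$ (impossible for $d=1$ there, hence the weaker conclusion). This shows $\nu(B(r_0,\epsilon))\lesssim\epsilon^{s}$, i.e.\ a Frostman condition, directly. Finally, for the last sentence of the statement, insert $q=2(d+\beta-\alpha)/\beta$ using Lemma \ref{thm:decayToLq}.
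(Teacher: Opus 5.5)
\textbf{There is a genuine gap.} Neither of your two routes establishes the quantitative estimate the lemma rests on, namely finiteness of $I_s(\nu)$ or a Frostman bound $\nu([R,R+\varepsilon])\lesssim\varepsilon^s$. In both routes the decisive step is asserted rather than proved.

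\emph{The energy route.} The ``decoupling'' $\int|\widehat{\nu}(t)|^2|t|^{s-1}\,dt\lesssim\int\sigma(\mu)(r)^2r^{d+s-2}\,dr$ does not follow from Plancherel in $r$. Here $\widehat{\nu}(t)$ is the pairing of $\Phi(\xi)=\int\widehat{\mu}(2\eta)\widehat{\mu}(\eta-\xi)\overline{\widehat{\mu}(\eta+\xi)}\,d\eta$ with the Fourier transform of the (cut-off) radial function $u\mapsto e^{-2\pi i|u|t}$. So $|\widehat{\nu}(t)|^2$ couples spherical averages of $\Phi$ at two different radii through an oscillatory Bessel-type kernel, and nothing in your sketch diagonalizes it. The only diagonalization in the paper's toolkit is Mattila's Bessel-asymptotics/Hilbert-transform splitting. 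That yields an $L^2$ bound for $r^{-(d-1)/2}\nu$, not an $s$-energy bound, and you do not say how to adapt it. The exponent matching is then left as ``I expect''. Attributing the $\frac12$ to $|J|\lesssim|\cdot|^{-1/2}$, and the rest to a Brascamp--Lieb gain, is not backed by any computation.

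\emph{The fallback.} It fails at a specific line. In the splitting argument you want to reuse, the high-frequency piece is $r^{(d-1)/2}\int_{1/r}^{\infty}s^{(d-1)/2}|\sigma(f)(s)|\,ds$. It is bounded by $r^{d-1-\rho}\int_{1/r}^{\infty}s^{d-1-\rho}|\sigma(f)(s)|\,ds$ only if $(rs)^{\rho-(d-1)/2}\le 1$ for $rs\ge 1$, that is, only if $\rho\le\frac{d-1}{2}$. But your integral control of $\int s^{d-1-\rho}\varsigma(\mu)(s)\,ds$ needs $\rho>d(1-\frac1q)\ge\frac d2$, and you even take $\rho>d-1$. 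Put differently, the high-frequency weight $s^{(d-1)/2}$ is heavier than anything the $L^q$ hypothesis controls.

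\emph{A smaller point.} $\mathcal{H}^s_\infty(\operatorname{supp}\nu)\ge\|\nu\|^2/I_s(\nu)$ combined with $I_s(\nu)\le C(M)$ is quadratic in $\|\nu\|$. An \emph{upper} bound on $\|\nu\|$ cannot turn this into $\gtrsim\|\nu\|$.

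\textbf{The missing idea} is to bypass $\sigma(\mu)$ altogether and test $\nu$ directly on intervals, as the paper does.
\begin{itemize}
\item Take $\chi=1_{B(0,R+\varepsilon)}-1_{B(0,R)}$, so that $\nu([R,R+\varepsilon])=\lim_n\iint\chi(u)\prod_i\phi_n\ast\mu(x-iu)\,dx\,du$.
\item On the Fourier side this is a four-fold integral over $(\xi,\eta)\in\mathbb{R}^{2d}$ of $\widehat{\chi}$ against three copies of $\widehat{\phi_n\ast\mu}$ at transverse linear forms.
\item Brascamp--Lieb with $1/p+3/q=2$ (forced by scaling) gives $\nu([R,R+\varepsilon])\lesssim\|\widehat{\chi}\|_{L^p}\|\widehat{\mu}\|_{L^q}^3$.
\item Falconer's annulus estimate $|\widehat{\chi}(\xi)|\lesssim\min\{\varepsilon R^{d-1},R^{(d-1)/2}\varepsilon^s|\xi|^{-(d-1)/2-(1-s)}\}$ gives $\|\widehat{\chi}\|_{L^p}\lesssim\varepsilon^sR^{(d-1)/2}$, provided $p(\frac{d+1}{2}-s)>d$.
\item The mass distribution principle turns this Frostman bound into $\mathcal{H}^s(\triangle^3(\mu))\gtrsim\|\nu\|$, linearly in $\|\nu\|$.
\end{itemize}
So the $\frac12$ comes from the $|\xi|^{-(d+1)/2}$ decay of the annulus, not from the Bessel kernel in the polar formula for $\nu$.

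\textbf{A caution on the threshold.} Solving $p(\frac{d+1}{2}-s)>d$ with $1/p=2-3/q$ gives $s<\frac12+\frac{d(6-3q)}{2q}$, not the $\frac12+\frac{d(6-2q)}{2q}$ in the statement. As a check, this tends to $\frac12$ as $q\downarrow 2$, which it must, since $\|\widehat{\chi}\|_{L^2}=\|\chi\|_{L^2}\approx(\varepsilon R^{d-1})^{1/2}$. So do not expect any exponent bookkeeping along these lines to reproduce the stated threshold exactly.
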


\begin{proof}
\textbf{Step 1 Existence.}
Lemma \ref{thm:existence} applies directly, so $\nu=\delta(\mu)$ is a
finite Radon measure.

\medskip
\noindent
\textbf{Step 2 Annulus and its Fourier decay.}
Fix $a\in\R^{d}$, radii $0<R<R+\varepsilon\le2$, and let
$\chi:=1_{B(a,R+\varepsilon)}-1_{B(a,R)}$.
Falconer’s estimate (Lemma 2.1 in \cite{falconer}) gives, for any
$s\in(0,1)$,
\begin{equation}\label{eq:Bessel-annulus}
  |\widehat{\chi}(\xi)|
  \;\;\lesssim\;\;
  \min\!\Bigl\{\!
    \varepsilon R^{d-1},\;
    R^{\frac{d-1}{2}}\,\varepsilon^{\,s}\,
    |\xi|^{-\frac{d-1}{2}-(1-s)}
  \Bigr\}.
\end{equation}

\medskip
\noindent
\textbf{Step 3 Fourier representation of the triple integral.}
Let $\phi_{n}$ be any approximate identity and set
\[
  F_{0,n}(x,r)=\chi(r)\,(\phi_{n}\ast\mu)(x),
  \qquad
  F_{1,n}(x,r)=(\phi_{n}\ast\mu)(x-r)\,(\phi_{n}\ast\mu)(x-2r).
\]
Plancherel in $x$ and then in $r$ yields
\[
  \int\chi\,d\nu
  \;=\;
  \lim_{n\to\infty}\!
    \iint F_{0,n}(x,r)\,F_{1,n}(x,r)\,dx\,dr
  \;=\;
  \lim_{n\to\infty}\!
    \sum_{\xi,\eta\in\mathbb Z^{d}}
      \widehat{\chi}(\xi-2\eta)\,
      \widehat{\phi_{n}\!\ast\!\mu}(\eta+\xi)\,
      \widehat{\phi_{n}\!\ast\!\mu}(\xi)\,
      \widehat{\phi_{n}\!\ast\!\mu}(\eta).
\]

\medskip
\noindent
\textbf{Step 4 Brascamp-Lieb / H\:older bound.}
With $1/p+3/q=2$ (hence $p=\frac{q}{2q-3}$) the Brascamp-Lieb
inequality gives
\[
  \bigl|\nu([R,R+\varepsilon])\bigr|
  \;=\;\bigl|\!\int\chi\,d\nu\bigr|
  \;\le\;
  \|\widehat{\chi}\|_{L^{p}}\,
  \|\widehat{\mu}\|_{L^{q}}^{3}.
\]

\medskip
\noindent
\textbf{Step 5 Choosing $s$.}
Choose $s<\frac12+\frac{d(6-2q)}{2q}$.
Inserting \eqref{eq:Bessel-annulus} into the $L^{p}$-norm gives
\(
  \|\widehat{\chi}\|_{L^{p}}
  \lesssim
  \varepsilon^{\,s} R^{(d-1)/2}.
\)
Hence
\[
  \nu([R,R+\varepsilon])
  \;\lesssim\;
  \varepsilon^{\,s}\,R^{\frac{d-1}{2}},
  \qquad 0<R<R+\varepsilon\le2.
\]

\medskip
\noindent
\textbf{Step 6 Frostman covering.}
Cover $\supp\nu\subset[0,2]$ by intervals
$\{[R_{i},R_{i}+\varepsilon_{i}]\}_{i}$.
Summing the above bound we find
\[
  0<\|\nu\|
  \;\le\;
  \sum_{i}\nu\bigl([R_{i},R_{i}+\varepsilon_{i}]\bigr)
  \;\lesssim\;
  \sum_{i}\varepsilon_{i}^{\,s},
\]
so the $s$-dimensional Hausdorff measure of
$\triangle^{3}(\mu)=\supp\nu$ is positive.

\medskip
\noindent
\textbf{Step 7 Value of $q$ under \eqref{itm:a}--\eqref{itm:b}.}
Lemma \ref{thm:decayToLq} shows those assumptions imply
\(\widehat{\mu}\in L^{q}\) with
\(q=2\bigl(d+\beta-\alpha\bigr)/\beta\), which lies in $(2,3]$.
Inserting this $q$ keeps all inequalities valid, completing the proof.
\end{proof}

\subsection{Proof of Theorem \ref{thm:L2FalconerResult}}

\renewcommand{\d}{\mathfrak{d}}
\renewcommand{\D}{\mathfrak{D}}
\renewcommand{\s}{\mathfrak{s}}
\renewcommand{\S}{\mathfrak{S}}
\newcommand{\I}[1]{\mathfrak{I}_{#1}}

\subsubsection{Pointwise decay}

  \begin{lemma}\label{thm:compactfouriersupport}
    \newcommand{\wF}{\widehat{F}}
    \renewcommand{\F}{F}
Let $\sigma_r$ denote the uniform measure on the unit sphere of radius r in $\R^d$.
   Suppose that $\F:\R^d\ra\R$ and that $\F$ is compactly supported as a distribution. 
  Let $D = \supp\wF$. Then
   \[\int \abs{\F(x)}\,d\sigma_r(x) \lesssim_{D} r^{-(d-1)}\int_{\abs{\abs{x}-r}<O_{D}(1)} \abs{\F(x)}\,dx.\]
  \end{lemma}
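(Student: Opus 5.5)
The plan is to read the hypotheses literally: $F$ is compactly supported as a distribution, say $\supp F\subset B(0,R_0)$, and $D=\supp\widehat F$ is compact. I let every implicit constant depend on $D$ and on $R_0$, treating $R_0$ as part of the data that "$\lesssim_D$" refers to. I normalise $\sigma_r$ as a probability measure on the sphere of radius $r$, so $r^{-(d-1)}$ is exactly the ratio between $\sigma_r$ and surface measure. (If $\sigma_r$ means unnormalised surface measure, the stated $r^{-(d-1)}$ factor does not hold for large $r$; under the literal reading the issue is moot, since the support steps below bound $r$.) The argument combines a local reproducing formula coming from the Fourier support with the physical support, which makes the sharp cutoff to an annulus harmless.

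\textbf{Step 1 (reproducing kernel).} Choose a Schwartz function $\psi$ with $\widehat\psi\equiv 1$ on a neighbourhood of $D$; its Schwartz seminorms depend only on $D$. Since $\widehat F$ is supported in $D$, we have $F=F*\psi$. This gives the pointwise bound
\[
|F(y)|\le \int |F(z)|\,|\psi(y-z)|\,dz\le \|\psi\|_{L^\infty}\,\|F\|_{L^1}\lesssim_D \|F\|_{L^1}.
\]
In particular $F$ is a bounded continuous function, so the sphere integral makes sense.

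\textbf{Step 2 (large radii).} If $r>R_0$, the sphere $\{|x|=r\}$ does not meet $\supp F$. The left-hand side is then $0$ and there is nothing to prove.

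\textbf{Step 3 (small radii).} If $r\le R_0$, every $x\in\supp F$ satisfies $\abs{\abs{x}-r}\le \max(r,R_0)\le R_0$. Hence, with the constant $O_D(1):=2R_0$, the annulus $\{\abs{\abs{x}-r}<2R_0\}$ contains $\supp F$, and the right-hand integral equals $\|F\|_{L^1}$. Because $\sigma_r$ is a probability measure, Step 1 gives $\int|F|\,d\sigma_r\le \|F\|_{L^\infty}\lesssim_D\|F\|_{L^1}$. Finally $r^{-(d-1)}\ge R_0^{-(d-1)}\gtrsim_D 1$ for $r\le R_0$. Combining these yields the claimed inequality.

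The main obstacle is the regime in which $\supp F$ is \emph{not} confined to a ball depending only on $D$, for example if one intended merely "$\widehat F$ compactly supported". In that case the argument above only yields a weight $(1+\abs{\abs{z}-r})^{-N}$: one integrates $|F|\le|F|*|\psi|$ against $\sigma_r$ and uses $|\psi|*\sigma_r(z)\lesssim_N (1+\abs{\abs{z}-r})^{-N}$. Truncating that weight to a sharp annulus would require controlling far shells by near ones, which band-limitation alone does not provide. This is exactly why I rely on the compact-support hypothesis as stated, which makes the tail empty.
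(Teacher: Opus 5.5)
Your Steps 1--3 are internally consistent, but under the reading you adopt they only prove the inequality for the zero function.

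If $F$ is a compactly supported distribution, then by Paley--Wiener--Schwartz $\widehat F$ is the restriction to $\R^d$ of an entire function on $\mathbb{C}^d$. If in addition $D=\supp\widehat F$ is compact, then $\widehat F$ vanishes on the nonempty open set $\R^d\setminus D$, hence identically, so $F\equiv 0$. If you do not add compactness of $D$ (the statement does not assert it), then every nonzero compactly supported $F$ has $D=\R^d$, and Step 1 has no $\psi$ to work with.

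Separately, you let both the implicit constant and the annulus width depend on $R_0$, which $\lesssim_D$ and $O_D(1)$ do not allow. Even granting that, the literal statement is false for nonzero $F$. Take a continuous $F\geq 0$ supported in $\set{\abs{\abs{x}-r}<\tau}$ with $F=1$ on $\set{\abs{x}=r}$. With your normalisation its left side is $1$ and its right side is $O(\tau)$, while $R_0=r+1$ stays fixed and $\tau\to 0$.

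The hypothesis the paper actually needs is that $\widehat F$ has compact support $D$; this is clear from its use in Lemma \ref{thm:PointwiseDecay} with $F_\eta(\xi)=\widehat\mu(\eta+\xi)\widehat\mu(\eta-\xi)$. The paper's proof takes $\phi$ supported in $B(0,O_D(1))$ with $\widehat\phi\equiv 1$ on $D$ and writes $F=\phi*F$. It then bounds $\int\abs{F}\,d\sigma_r\le\int(\abs{\phi}*\sigma_r)\abs{F}$, using that $\abs{\phi}*\sigma_r$ is $O(r^{-(d-1)})$ and supported in the annulus. So the paper localises through the support of $\phi$ and you through the support of $F$, and neither is available. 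Such a $\phi$ cannot exist once $D$ has nonempty interior: $\widehat\phi$ is real-analytic, so $\widehat\phi\equiv1$ on an open set would force $\phi=\delta_0$.

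Your final paragraph correctly identifies that a Schwartz $\psi$ only yields the weight $(\abs{\psi}*\sigma_r)(z)\lesssim_{D,N} r^{-(d-1)}(1+\abs{\abs{z}-r})^{-N}$. In fact no argument can give the sharp annulus. Consider $P(x)h(\delta x)$ with $P$ a real polynomial, $\widehat h\in C_c^\infty(B(0,1/2))$ real and even, $h(0)=1$, and $0<\delta\le 1$. These functions are real, have Fourier support in $B(0,1/2)$, and are uniformly dense in $C(\overline{B(0,r+C)})$ (Weierstrass, then $\delta\to 0$). Approximating the thin shell above within $\varepsilon$ gives left side $\geq 1-\varepsilon$, while $r^{-(d-1)}\int_{\abs{\abs{x}-r}<C}\abs{F}=O_{r,C}(\tau+\varepsilon)$.

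So the lemma has to be restated with $(1+\abs{\abs{x}-r})^{-N}$ in place of the indicator of the annulus. That weighted version is exactly what the convolution in your Step 1 gives, as your last paragraph notes. Integrate $\abs{F}\le\abs{F}*\abs{\psi}$ against $\sigma_r$; no support hypothesis on $F$ is used. This weight has $L^p$ norm $\approx r^{(d-1)/p}$ for $r\geq 1$, the same as the annulus indicator, so it can replace the indicator in the Brascamp--Lieb step of Lemma \ref{thm:PointwiseDecay}.
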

\begin{proof}
    \newcommand{\wF}{\widehat{F}}
    \renewcommand{\F}{F}
    
  Let $\phi$ be compactly supported in $B(0,O_D(1))$ and such that $\widehat{\phi}|_{D}=1$. Then $\F = \phi\ast\F$, so 
  \[\int \abs{\F}\,d\sigma_r = \int \abs{\phi\ast\F}\,d\sigma_r \leq \int \abs{\phi}\ast \abs{F}\,d\sigma_r=\int \abs{\phi}\ast\sigma_r\,\abs{F}\leq\norm{\abs{\phi}\ast\sigma_r}_{L^{\infty}}\int_{\abs{\abs{x}-r}<O_D(1)}\abs{\F(x)}\,dx\]
  which proves the lemma.
\end{proof}

 \begin{lemma}\label{thm:PointwiseDecay}
  Suppose that $q<3$. Then 
  \[\Si{\mu}(r)\lesssim \Min{1,r^{-\paren{3d\frac{q-1}{q}-1}}}\norm{\widehat{\mu}}_{L^q}^3.\]
 \end{lemma}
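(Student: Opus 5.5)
The estimate splits into two regimes. For $r\lesssim 1$ we prove the trivial bound $\varsigma(\mu)(r)\lesssim\norm{\widehat{\mu}}_{L^q}^3$. For $r\gtrsim 1$ we prove the decay bound $r^{-(3d\frac{q-1}{q}-1)}\norm{\widehat{\mu}}_{L^q}^3$. As in Lemma \oldref{thm:mattilaLInfintymain}, we first work with $\mu$ replaced by $\phi_n\ast\mu$, so that all integrals converge absolutely, and recover $\mu$ at the end by a limiting argument; the bounds below are uniform in $n$ because $\abs{\widehat{\phi_n}}\leq 1$. Throughout we write $\varsigma(\mu)(r)$ as
\[\varsigma(\mu)(r)=\int \abs{\widehat{\mu}(2\eta)}\,\big(\abs{\widehat{\mu}(\eta-\cdot)}\,\abs{\widehat{\mu}(\eta+\cdot)}\big)\ast \text{(nothing)}\ \text{integrated against } d\sigma(\theta)\,d\eta,\]
that is, as an integral over the sphere of radius $r$ in the $\xi$ variable, normalized to unit mass, of
\[H(\xi):=\int\abs{\widehat{\mu}(2\eta)\widehat{\mu}(\eta-\xi)\widehat{\mu}(\eta+\xi)}\,d\eta .\]

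\textbf{Small $r$.} For $r\lesssim1$ we apply H\"older in $\eta$ with exponents $(3,3,3)$. This requires $q\le 3$, which holds, together with $\widehat{\mu}$ bounded, so that the $L^q$ control passes to $L^3$ up to the constant $\norm{\widehat\mu}_\infty\le1$. For each fixed $\theta$ this gives $H(r\theta)\lesssim\norm{\widehat{\mu}}_{L^3}^3$. Interpolating between $L^q$ and $L^\infty$ then yields $\lesssim\norm{\widehat{\mu}}_{L^q}^3$, where we use the normalization $\norm{\widehat\mu}_\infty\le\norm{\mu}$, which is comparable to $\norm{\widehat\mu}_{L^q}$ in the regime considered. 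Integrating over the unit-mass sphere gives the bound $1\cdot\norm{\widehat{\mu}}_{L^q}^3$.

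\textbf{Large $r$.} The aim is to convert the spherical average into an $L^p$-type bound on $H$ and then use Brascamp--Lieb in the form already used in Lemma \oldref{thm:mattilaLInfintymain}. Because $\mu$ is supported in $[0,1]^d$, $\widehat{\mu}$ is the Fourier transform of a compactly supported measure. Hence $H$, viewed as a function of $\xi$, has distributional Fourier transform supported in a fixed compact set $D$: it is built from $\mu\times\mu\times\mu$ pushed forward under a linear map. After mollifying the absolute values, or working with $\widehat{\mu}$ itself and using positivity as in the definition of $\sigma(\mu)$, Lemma \oldref{thm:compactfouriersupport} gives
\[\varsigma(\mu)(r)\lesssim r^{-(d-1)}\int_{\abs{\abs{\xi}-r}<O(1)}H(\xi)\,d\xi .\]
We then apply H\"older on the annulus $A_r=\set{\abs{\abs{\xi}-r}<O(1)}$, which has measure of order $r^{d-1}$, together with the multilinear bound $\norm{H}_{L^{s}}\lesssim\norm{\widehat\mu}_{L^q}^3$. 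By Brascamp--Lieb/Young for the trilinear form $(\xi,\eta)\mapsto(2\eta,\eta-\xi,\eta+\xi)$, which is nondegenerate, scaling forces $\frac{1}{s'}+\frac{3}{q}=2$, i.e.\ $\frac1s=\frac3q-1$.

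This yields
\[\varsigma(\mu)(r)\lesssim r^{-(d-1)}\,r^{(d-1)(1-1/s)}\norm{\widehat\mu}_{L^q}^3 = r^{-(d-1)(\frac3q-1)}\norm{\widehat\mu}_{L^q}^3 .\]
The stated exponent $3d\frac{q-1}{q}-1$ should come from doing this more carefully. Instead of H\"older on the annulus, we would exploit the full $d$-dimensional decay of $H$: by the scaling of Brascamp--Lieb, $H$ lies in a weak-type space and satisfies $\int_{\abs{\xi}>r}H\lesssim r^{-d(\frac{3(q-1)}{q}-1)}$. This tail bound is proved dyadically, using $\abs{\eta\pm\xi}\gtrsim r$ for most $\eta$, so that two of the three factors sit at frequency at least $r$, where their $L^q$ mass decays. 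Combining it with the $r^{-(d-1)}$ prefactor then gives the claimed exponent.

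\textbf{Main obstacle.} The hard part is matching the exponent exactly. This needs a tail estimate for $\int_{A_r}H$ that decays like $r^{-3d(q-1)/q+d}$. I would attempt it by splitting $\eta$ according to whether $\abs{\eta}\ll r$ or not, then applying H\"older in $L^q$ to the two factors forced to high frequency. The gain would come from Chebyshev-type bounds on $\abs{\set{\abs{\widehat\mu}>t}}$, or more simply from treating the $L^q$ norm restricted to $\abs{\xi}\gtrsim r$ as bounded by the full norm while extracting the volume factor from the $\eta$-integration.
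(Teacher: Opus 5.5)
\textbf{Where your argument matches the paper.} Your main line is the paper's argument. The paper fixes $\eta$ and applies the spherical-averaging lemma for functions with compactly supported Fourier transform to $F_\eta(\xi)=\widehat{\mu}(\eta+\xi)\widehat{\mu}(\eta-\xi)$. It then integrates against $|\widehat{\mu}(2\eta)|\,d\eta$ and finishes with Brascamp--Lieb, putting the indicator of the shell $A_r$ in $L^p$ with $\frac{1}{p}+\frac{3}{q}=2$.

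You should apply the averaging lemma to $F_\eta$ as the paper does, not to your $H$, because the absolute values in $H$ destroy compact Fourier support. The resulting inequality is the one you display. Done carefully, this gives exactly your bound $\varsigma(\mu)(r)\lesssim r^{-(d-1)}|A_r|^{1/p}\|\widehat{\mu}\|_{L^q}^3\approx r^{-(d-1)(\frac{3}{q}-1)}\|\widehat{\mu}\|_{L^q}^3$.

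The paper's last line is the weaker $r^{-(d-1)}r^{d/p}$, since it uses volume $r^d$ rather than $r^{d-1}$. The exponent in the statement, $3d\frac{q-1}{q}-1=(d-1)+\frac{d}{p}$, is what one gets by flipping the sign of $\frac{d}{p}$. So the mismatch you noticed is real.

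For small $r$, the actual reason $\|\widehat{\mu}\|_{L^3}^3\lesssim\|\widehat{\mu}\|_{L^q}^3$ holds is $\|\widehat{\mu}\|_{L^\infty}\lesssim\|\widehat{\mu}\|_{L^q}$. This follows from $\widehat{\mu}=\widehat{\chi}\ast\widehat{\mu}$ for any $\chi\in C_c^\infty$ with $\chi\equiv1$ on $[0,1]^d$. Saying the norms are ``comparable in the regime considered'' is not an argument.

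\textbf{The gap.} The genuine gap is your proposed upgrade to the stated exponent, and it cannot be closed. A tail bound $\int_{|\xi|>r}H\lesssim r^{-d(2-\frac{3}{q})}\|\widehat{\mu}\|_{L^q}^3$ needs the $L^q$ mass of $\widehat{\mu}$ at frequencies $\gtrsim r$ to decay at a definite rate. A bound on $\|\widehat{\mu}\|_{L^q}$ gives no such rate.

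In fact the lemma as stated is false for every $q\in(\frac{3}{2},3)$. Take $\mu=\psi(x)\bigl(1+\cos(2\pi a\cdot x)\bigr)\,dx$ with $0\le\psi\in C_c^\infty((0,1)^d)$ and $\int\psi>0$. Then $\widehat{\mu}=\widehat{\psi}+\frac{1}{2}\widehat{\psi}(\cdot-a)+\frac{1}{2}\widehat{\psi}(\cdot+a)$, so $\|\widehat{\mu}\|_{L^q}\le2\|\widehat{\psi}\|_{L^q}$ uniformly in $a$.

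Now take $|\eta|\le c$ and $\theta$ within $c/|a|$ of $a/|a|$. The points $2\eta$, $\eta-|a|\theta$, $\eta+|a|\theta$ then lie within $2c$ of $0,-a,a$ respectively, where $|\widehat{\mu}|\gtrsim1$ once $c$ is small and $|a|$ is large. Hence $\varsigma(\mu)(|a|)\gtrsim c^d(c/|a|)^{d-1}\gtrsim|a|^{-(d-1)}$.

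Likewise $H\gtrsim1$ on the $c$-neighbourhoods of $\pm a$. So $\int_{|\xi|>r}H\gtrsim1$ for every $r<|a|-c$, which rules out your tail bound as well.

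Since $3d\frac{q-1}{q}-1>d-1$ exactly when $q>\frac{3}{2}$, letting $|a|\to\infty$ contradicts the claimed decay. What this method actually proves is your bound $r^{-(d-1)(\frac{3}{q}-1)}$.
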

 \begin{proof}
 
\newcommand{\wF}{\widehat{F_{\eta}}}
\renewcommand{\F}{F_{\eta}}

  One estimates via H\"older's inequality
  \eqn{ 
  \Si{\mu}(r) =\iint\abs{\widehat{\mu}(\eta + r\theta)\overline{\widehat{\mu}(\eta - r\theta)}\widehat{\mu}(2\eta)}\,d\eta\,d\sigma(\theta)
  \\\lesssim& \int \norm{\widehat{\mu}}_{L^3}^3\,d\sigma(\theta) = \norm{\widehat{\mu}}_{L^3}^3\lesssim\norm{\widehat{\mu}}_{L^q}^3.
  }

 Now fix $\eta\in\R^d$ and let $\F(\xi):= \widehat{\mu}(\eta+\xi)\widehat{\mu}(\eta-\xi)$. Then $\supp(\wF)\subset \supp(\mu)+\supp(\mu)$ so that $\wF$ is compactly supported. By Lemma \ref{thm:compactfouriersupport},
 \[\int \abs{\F(\xi)}\,d\sigma_r(\xi)\lesssim r^{-(d-1)}\int_{\abs{\abs{\xi}-r}<O(1)}\abs{\F(\xi)}\,d\xi.\]
 
 Thus
  \eqn{ 
  \Si{\mu}(r) = \iint\abs{F_{\eta}(\theta)}\abs{\widehat{\mu}(2\eta)}d\sigma(\theta)\,d\eta
  \\\lesssim&
  r^{-(d-1)}\iint 1_{B(r,O(1))}(\xi)\abs{\widehat{\mu}(\eta + \xi)\overline{\widehat{\mu}(\eta - \xi)}\widehat{\mu}(2\eta)}\,d\xi\,d\eta
  \\\leq&
  r^{-(d-1)}r^{d/p}\norm{\widehat{\mu}}_{L^q}^3
  }
  for $1/p+3/q = 2$, where in the last line we have used the Brascamp-Lieb inequality. By hypothesis on $q$, this yields the desired inequality.

 \end{proof}
\renewcommand{\F}{\mathbb{F}}

\subsubsection{Tools from Mattila's Book \cite{MattilaBook2}}\label{s:ToolsFromMattila}

\renewcommand{\d}{\mathfrak{d}}
\renewcommand{\D}{\mathfrak{D}}
\renewcommand{\s}{\mathfrak{s}}
\renewcommand{\S}{\mathfrak{S}}
\renewcommand{\I}[1]{\mathfrak{I}_{#1}}
 
\newcommand{\wF}{\widehat{F_{\eta}}}
\renewcommand{\F}{F_{\eta}}

\renewcommand{\di}[1]{\delta\paren{#1}}
\renewcommand{\Si}[1]{\varsigma\paren{#1}}

\renewcommand{\F}{\mathbb{F}}

The following can be read from Mattila's paper \cite{MattilaFalconer}, though our reference will be Mattila's book \cite{MattilaBook2}.

\renewcommand{\d}{\mathfrak{d}}
\renewcommand{\D}{\mathfrak{D}}
\renewcommand{\s}{\mathfrak{s}}
\renewcommand{\S}{\mathfrak{S}}
\renewcommand{\I}[1]{\mathfrak{I}_{#1}}

In this section, suppose that \[\s\in L^1\] satisfies $\s\geq 0$ and define
\begin{align}\label{mattilal21}
\d(r):=& r^{d/2}\int s^{d/2} J(rs)\s(s)\,ds\\
\label{mattilal22}
\D(r) :=& r^{-(d-1)/2}\d(r)\\
\label{mattilal223}
\S(s) :=& s^{(d-1)/2}\s(s)\\
\I{s} :=& \int_0^{\infty} \s(r)r^{s-1}\,dr.
\end{align}

Note that since $s\in L^1$ and $J$ is bounded, $\d$ is defined. The following is an adaptation of the discussion in Section 15.2 of \cite{MattilaBook2} (where the Falconer Distance Set conjecture is considered, and the dictionary between our symbols and their analogues in \cite{MattilaBook2} are that $\s \equiv \sigma(\mu)$, $\S \equiv \Sigma(\mu)$, $\d \equiv \delta(\mu)$, and $\D \equiv \Delta(\mu)$).

\begin{lemma}\label{thm:mattilamanipulations}
 Given the above,
 \begin{equation}\label{mattilal20}
 \D(r) = \sqrt{r}\int_0^{\infty}\sqrt{s}J(rs)\S(s)\,ds.
 \end{equation}
 Further, there exist functions $S,L$, and $K$ such that
 \begin{enumerate}[\ref{thm:mattilamanipulations}.(a)]
  \item $\D = S + L$ \label{itm:mattilal21}
  \item \label{itm:mattilal22} $\norm{S}_{L^2}\approx \norm{\S}_{L^2}$
  \item \label{itm:mattilal23} $L(r) = \sqrt{r}\int_0^{\infty} \sqrt{s}\S(s)K(rs)\,ds$
  \item \label{itm:mattilal24} $\abs{K(r)}\lesssim \min(r^{-1/2},r^{-3/2})$.
  \item \label{itm:mattilal25} $\abs{L(r)} \lesssim r^{\alpha-(d+1)/2}\I{\alpha}$ for $\alpha\in\baren{\frac{d-1}{2},\frac{d+1}{2}}$.
 \end{enumerate}
\end{lemma}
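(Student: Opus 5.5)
The identity \eqref{mattilal20} is algebraic. Substituting \eqref{mattilal21} into \eqref{mattilal22} gives
\[\D(r)=r^{-(d-1)/2}r^{d/2}\int s^{d/2}J(rs)\s(s)\,ds=\sqrt r\int \sqrt s\,J(rs)\,s^{(d-1)/2}\s(s)\,ds,\]
and the last factor is $\S(s)$ by \eqref{mattilal223}. So \eqref{mattilal20} follows at once.

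For the decomposition I will follow Section 15.2 of \cite{MattilaBook2} and use the asymptotic expansion of the Bessel function. For $t\geq 1$ write
\[J(t)=\sqrt{\tfrac{2}{\pi t}}\cos\paren{t-\tfrac{(d-1)\pi}{4}}+E(t),\qquad \abs{E(t)}\lesssim t^{-3/2}.\]
Let $\chi$ be the indicator of $[1,\infty)$. Set
\[K(t):=J(t)-\chi(t)\sqrt{\tfrac{2}{\pi t}}\cos\paren{t-\tfrac{(d-1)\pi}{4}},\]
and define
\[S(r):=\sqrt{\tfrac{2}{\pi}}\int_{rs\geq 1}\cos\paren{rs-\tfrac{(d-1)\pi}{4}}\S(s)\,ds .\]
The factor $\sqrt r\sqrt s$ cancels against the $(rs)^{-1/2}$ of the main term, which is why no prefactor appears. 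With $L$ defined as in \ref{itm:mattilal23}, item \ref{itm:mattilal21} holds by construction.

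For \ref{itm:mattilal24}: when $t\geq1$ we have $K=E$, so $\abs{K(t)}\lesssim t^{-3/2}$. When $t<1$ we have $K=J$, which is bounded and so $\lesssim t^{-1/2}$. (For $d\geq2$ the bound $\abs{J(t)}\lesssim t^{(d-2)/2}$ gives this directly.)

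For \ref{itm:mattilal22}, $S$ is essentially a cosine/sine transform of $\S$, whose $L^2$ norm is $\approx\norm{\S}_{L^2}$ by Plancherel. The truncation $rs\geq1$ is a nuisance. I would handle it as Mattila does. Either I replace the sharp cutoff by a smooth one and absorb the difference into $K$, which keeps the bound in \ref{itm:mattilal24}. Or I take $S$ to be the untruncated transform and put the $t<1$ part of the cosine into $K$; that piece is $\lesssim t^{-1/2}$, which is still admissible. The second choice is cleaner: with
\[S(r)=\sqrt{\tfrac2\pi}\int_0^\infty\cos\paren{rs-\tfrac{(d-1)\pi}{4}}\S(s)\,ds,\]
expanding the cosine writes $S$ as a combination of the Fourier cosine and sine transforms of $\S$ on $(0,\infty)$. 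The upper bound $\norm{S}_{L^2}\lesssim\norm{\S}_{L^2}$ is immediate. The reverse inequality follows by using the orthogonality of the even and odd extensions of $\S$, again as in Mattila.

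For \ref{itm:mattilal25}, use the bound $\abs{K(t)}\lesssim t^{-\gamma}$, valid for every $\gamma\in[1/2,3/2]$ by interpolating the two bounds in \ref{itm:mattilal24}. Then
\[\abs{L(r)}\lesssim \sqrt r\int \sqrt s\, s^{(d-1)/2}\s(s)(rs)^{-\gamma}\,ds= r^{1/2-\gamma}\int s^{d/2-\gamma}\s(s)\,ds.\]
Choosing $\gamma=(d+1)/2-\alpha$, which lies in $[1/2,3/2]$ exactly when $\alpha\in\baren{\frac{d-1}{2},\frac{d+1}{2}}$, gives $r^{\alpha-d/2}$ times $\int s^{\alpha-1/2}\s(s)\,ds$. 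The exponents need care here. The stated bound $r^{\alpha-(d+1)/2}\I{\alpha}$ requires the $s$-power to be $s^{\alpha-1}$, and this normalization depends on the convention for $\I{\alpha}$ (Mattila uses $\int \s(s) s^{\alpha-d}\,ds$-type quantities after the weight $s^{d-1}$). I would redo the power count by tracking $\S$ against $\s$ and adjust the choice of $\gamma$ accordingly.

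The main obstacle is matching the exponents in \ref{itm:mattilal25} and proving the two-sided $L^2$ comparison in \ref{itm:mattilal22}. Everything else is direct substitution.
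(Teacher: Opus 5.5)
Your route is the paper's: the identity for $\mathfrak{D}$ by substitution, the untruncated Bessel main term with $K$ as the remainder (bounded by $t^{-1/2}$ near $0$ and by $t^{-3/2}$ at infinity), $S$ as a cosine/sine transform of $\mathfrak{S}$ treated through its even and odd extensions, and a power bound on $K$ for (e). The genuine gap is (e), which you left unresolved. The problem there is an arithmetic slip, not a normalization convention. You correctly get
\[
|L(r)|\lesssim r^{1/2-\gamma}\int_0^\infty s^{d/2-\gamma}\,\mathfrak{s}(s)\,ds ,
\]
but the $\sqrt{r}$ in front shifts the exponent you need by $\tfrac12$. Take $\gamma=\tfrac{d+2}{2}-\alpha$, not $\tfrac{d+1}{2}-\alpha$. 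Then $\tfrac d2-\gamma=\alpha-1$ and $\tfrac12-\gamma=\alpha-\tfrac{d+1}{2}$. So the right-hand side is exactly $r^{\alpha-(d+1)/2}\,\mathfrak{I}_{\alpha}$ with the paper's own definition $\mathfrak{I}_{\alpha}=\int_0^\infty\mathfrak{s}(s)s^{\alpha-1}\,ds$. Moreover $\gamma\in[\tfrac12,\tfrac32]$ precisely when $\alpha\in[\tfrac{d-1}{2},\tfrac{d+1}{2}]$. Your $\gamma=\tfrac{d+1}{2}-\alpha$ lies in $[\tfrac12,\tfrac32]$ only for $\alpha\in[\tfrac{d-2}{2},\tfrac d2]$, so the equivalence you asserted was also wrong. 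With this fix, your estimate is the paper's in another form. The paper splits at $s=1/r$ and uses $(rs)^{a}\le 1$ and $(rs)^{a-1}\le 1$ on the two pieces, with $a=\tfrac{d+1}{2}-\alpha=\gamma-\tfrac12\in[0,1]$.

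On (b), the upper bound is fine. However, even/odd orthogonality does not give the reverse inequality for $S$ on $(0,\infty)$, which is where $\mathfrak{D}=S+L$ lives. Write $S=a\,\mathcal{C}\mathfrak{S}+b\,\mathcal{T}\mathfrak{S}$, with $\mathcal{C},\mathcal{T}$ the cosine and sine transforms on $(0,\infty)$. If you extend $S$ to $\mathbb{R}$ by the same formula, the cross term $2ab\int_{\mathbb{R}}\mathcal{C}\mathfrak{S}\cdot\mathcal{T}\mathfrak{S}$ vanishes because its integrand is odd. On $(0,\infty)$ it does not vanish. With your $S$, which is also the paper's, the lower bound actually fails when $d=4$. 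There $\int_0^\infty\cos(rs-\tfrac{3\pi}{4})s^{-1/2}\,ds=0$ for every $r>0$. Consequently $\mathfrak{S}=s^{-1/2}1_{[1,R]}$ has $\|\mathfrak{S}\|_{L^2}^2=\log R$ while $\|S\|_{L^2(0,\infty)}=O(1)$. The paper's Hilbert-transform argument has the same limitation. Only $\|S\|_{L^2}\lesssim\|\mathfrak{S}\|_{L^2}$ is used later, in the $L^2$ bound for $\mathfrak{D}$, so you should claim just that direction.
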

\begin{proof}

The identity \eqref{mattilal20} follows directly from  \eqref{mattilal21} and  \eqref{mattilal22}. Next, recall the following Bessell function bound, which can be found Section 3.3 of Mattila's \cite{MattilaBook2},
\[ J_m(r) = \frac{\sqrt{2}}{\sqrt{\pi r}} \cos\paren{r - \pi m/2 -\pi/4} + O(r^{-3/2});\]
thus since $J=J_{(d-2)/2}$, we have 
\begin{equation}\label{mattilal25}
J(r) \approx \frac{1}{\sqrt{r}} \paren{a_1\cos(2\pi r) + b_1\sin(2\pi r)} + K(r),
\end{equation}
where 
\[\abs{K(r)}\lesssim\min\paren{r^{-3/2},r^{-1/2}}\] (the bound of $r^{-1/2}$ on $K(r)$ follows from the fact that $J_m$ is bounded). This bound on $K$ is \ref{thm:mattilamanipulations}.\eqref{itm:mattilal24}.

Substituting \eqref{mattilal25} into \eqref{mattilal20} gives that 
\[\D(\mu)(r) = S(\mu)(r) + L(\mu)(r),\]
where
\[S(r) = a_2\int_0^{\infty}\cos(2\pi rs)\S(s)\,dr + b_2\int_0^{\infty}\sin(2\pi rs)\S(s)\,ds,\]
and
\[L(r) = \sqrt{r}\int_0^{\infty} \sqrt{s}\S(\mu)(s)K(rs)\,ds.\]

This is \ref{thm:mattilamanipulations}.\eqref{itm:mattilal21} and \ref{thm:mattilamanipulations}.\eqref{itm:mattilal23}. It remains to show \ref{thm:mattilamanipulations}.\eqref{itm:mattilal22} and \ref{thm:mattilamanipulations}.\eqref{itm:mattilal25}. We show \ref{thm:mattilamanipulations}.\eqref{itm:mattilal22} first. To do so, set $\S_1(r) = \S(\abs{r})$ and $\S_2(r) = \sgn(r)\S(\abs{r})$. Then since $\S_1$ is even and $\S_2$ is odd, 

\eqn{S(r) = a_2/2\int_{-\infty}^{\infty}\cos(2\pi rs)\S_1(s)\,ds + b_2/2\int_{-\infty}^{\infty}\sin(2\pi rs)\S_2(s)\,ds 
\\=&
a_2/2\int_{-\infty}^{\infty}e^{-2\pi i rs}\S_1(\mu)(r)\,ds + b_2/2\int_{-\infty}^{\infty}e^{-2\pi i rs}\S_2(s)\,ds .}

Next, extend $S$ to be defined as $S(r) = L(r) = 0$ for $r<0$. Then for all non-zero $r\in\R$, 
\eqn{ S(r) = \frac{a_2}{4} \int e^{-2\pi i rs} \S_1(s)\,ds 
\\+&
 \frac{a_2}{4} \sign(r) \int e^{-2\pi i rs}\S_1(s) \,ds
\\+&
 \frac{i b_2}{4} \int e^{-2\pi i rs}\S_2(s)\,ds 
\\+&
 \frac{i b_2}{4} \sign(r) \int e^{-2\pi i rs}\S_2(s)\,ds .}

 Let $H$ denote the Hilbert transform: $\widehat{Hf} = -i\sign \widehat{f}$ for $f\in L^2$, and $\mathcal{F}$ denote the Fourier transform. Then
 if $\Sigma\in L^2$, we have 
\[S = \mathcal{F}\paren{a_3\baren{\S_1 + i H\S_1} + b_3\baren{\S_2+ i H\S}}; \]
since the Fourier and Hilbert transforms are isometric on $L^2$, this gives \eqref{itm:mattilal22}.

Finally, we must show that \ref{thm:mattilamanipulations}.\eqref{itm:mattilal25} holds. Let $a = (d+1)/2 -\alpha$ and suppose that $\I{\alpha}< \infty$. Then by \ref{thm:mattilamanipulations}.\eqref{itm:mattilal23}, \eqref{mattilal223}, and \ref{thm:mattilamanipulations}.\eqref{itm:mattilal24}
\eqn{ \abs{L(r)} = \abs{\int_0^{\infty} \sqrt{rs}\S(s)K(rs)\,ds}
\\\lesssim& 
 \int_0^{1/r} s^{(d-1)/2}\s(s)\,ds + r^{-1}\int_{1/r}^{\infty} s^{(d-3)/2}\s(s)\,ds
\\=&\ 
 r^{-a}\int_0^{1/r} (rs)^{a} s^{(d-1)/2-a}\s(s)\,ds
 \\+&\ 
 r^{-a}\int_{1/r}^{\infty} (rs)^{a-1} s^{(d-1)/2-a}\s(s)\,ds 
 \\=&\ 
 r^{-a}\int_{-\infty}^{\infty} (rs)^{a-1} s^{(d-1)/2-a}\s(s)\,ds
\\ \leq &\  
 r^{-a} \int_0^{\infty} s^{\alpha-1}\s(s)\,ds 
\\\approx&\ 
 r^{\alpha-(d+1)/2}\I{\alpha}.
}
This completes the proof.
\end{proof}
\subsubsection{The $L^2$ bound}

Fix a positive, compactly supported function $F\in L^1\cap \widehat{L}^1$. We recall the notation of Subsubsection \ref{s:ToolsFromMattila}: set 
 \renewcommand{\I}[1]{\mathcal{I}_{#1}}
\begin{align}
\s(r) =&  \int_{\mathbb{S}^{d-1}} \widehat{F}(r\theta)\,d\sigma(\theta),\nonumber\\
\d(r):=& r^{d/2}\int_0^{\infty} s^{d/2} J(rs)\s(s)\,ds\nonumber\\
\D(r) :=& r^{-(d-1)/2}\d(r)\nonumber\\
\S(s) :=& s^{(d-1)/2}\s(s)\nonumber\\
\I{s} :=& \int_0^{\infty} \s(r)r^{s-1}\,dr.\label{energy}
\end{align}

Further, let $S, K$, and $L$ be the functions appearing in the decomposition guaranteed by Lemma \ref{thm:mattilamanipulations}.

\begin{lemma}\label{thm:positivegeneralizedenergy}
 Suppose that $\widehat{F}\in L^q$ for a $q<d$. Then the energy $\I{s}$ as defined in \eqref{energy} is finite and positive. Further, if we restrict to the high-frequency part (e.g., considering contribution from $r>1$), it is increasing as a function of $s$.
\end{lemma}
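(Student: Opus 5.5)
We need to show that $\I{s}=\int_0^\infty \s(r)r^{s-1}\,dr$ is finite and positive, and that its high-frequency part is increasing in $s$. The plan is to convert $\I{s}$ back to Cartesian coordinates. Since $\s(r)=\int_{\mathbb{S}^{d-1}}\widehat{F}(r\theta)\,d\sigma(\theta)$, polar coordinates give
\[\I{s} = \int_{\R^d} \widehat{F}(\xi)\abs{\xi}^{s-d}\,d\xi,\]
wherever this converges absolutely. The range of $s$ is implicit in the statement; we work with $0<s<d$, which is the range relevant for the energy comparisons in Lemma \ref{thm:mattilamanipulations}.\eqref{itm:mattilal25}.

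\textbf{Finiteness.} Split the integral at $\abs{\xi}=1$.
\begin{itemize}
\item On $\abs{\xi}\le 1$: $\widehat{F}$ is bounded by $\norm{F}_{L^1}$ and $\abs{\xi}^{s-d}$ is locally integrable for $s>0$, so this piece is finite.
\item On $\abs{\xi}>1$: apply H\"older with $\widehat{F}\in L^q$. This gives the bound $\norm{\widehat{F}}_{L^q}\,\norm{\abs{\xi}^{s-d}1_{\abs{\xi}>1}}_{L^{q'}}$, and the second factor is finite when $(d-s)q'>d$, i.e.\ $s<d/q$.
\item For larger $s$, use $\widehat{F}\in L^1$ instead (this is part of the standing assumption $F\in \widehat{L}^1$). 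Since $\abs{\xi}^{s-d}\le 1$ on $\abs{\xi}>1$ whenever $s\le d$, the tail is bounded by $\norm{\widehat{F}}_{L^1}$.
\end{itemize}
Thus the $L^1$ hypothesis does the real work, and the $L^q$ hypothesis ($q<d$) is what gives quantitative bounds in terms of $\norm{\widehat{F}}_{L^q}$.

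\textbf{Positivity.} This is the substantive point. We use the classical Riesz potential identity: for $0<s<d$, the function $\abs{\xi}^{s-d}$ is, up to a positive constant $c_{d,s}$, the Fourier transform of $\abs{x}^{-s}$. By Parseval (justified because $F\in L^1\cap\widehat{L}^1$ and the kernels are locally integrable; if needed, mollify with a Gaussian and pass to the limit),
\[\I{s} = c_{d,s}\int F(x)\abs{x}^{-s}\,dx.\]
This is strictly positive because $F\ge 0$ and $F$ is not identically zero.

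We cannot argue positivity directly on the Fourier side, since $\widehat{F}$ itself need not be nonnegative. If $F$ arises as an autocorrelation-type object, one could also view $\I{s}$ as an $s$-energy, but the pointwise positivity of $F$ already suffices.

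\textbf{Monotonicity.} For the high-frequency part $\int_{r>1}\s(r)r^{s-1}\,dr$, differentiate in $s$:
\[\frac{d}{ds}\int_{r>1}\s(r)r^{s-1}\,dr=\int_{r>1}\s(r)r^{s-1}\ln r\,dr.\]
Since $\ln r>0$ for $r>1$, this is nonnegative once we know $\s\ge 0$ there. That holds in the setting of the paper's Section~\ref{s:ToolsFromMattila}, where $\s\ge 0$ is assumed; it also holds whenever $F$ is positive-definite, by Herglotz's Theorem \ref{thm:Herglotz}. The differentiation under the integral is justified by the same domination as in the finiteness step, with a slightly larger exponent.

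\textbf{Main obstacle.} The delicate step is justifying the Parseval identity with the singular kernel $\abs{x}^{-s}$ and its Fourier transform $\abs{\xi}^{s-d}$. The plan is to approximate by $\abs{x}^{-s}e^{-\epsilon\abs{x}^2}$, whose Fourier transform is positive and increases to $c_{d,s}\abs{\xi}^{s-d}$ as $\epsilon\to0$, and then pass to the limit by monotone convergence on the $x$-side and dominated convergence on the $\xi$-side, using the integrability established in the finiteness step.
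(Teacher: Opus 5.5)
Your proposal is correct and is essentially the paper's argument: you convert to $\int\widehat{F}(\xi)\lvert\xi\rvert^{s-d}\,d\xi$, split at $\lvert\xi\rvert=1$ with H\"older on the tail, and get positivity from Parseval against the Riesz kernel, $\mathcal{I}_s=c_{d,s}\int\lvert x\rvert^{-s}F(x)\,dx>0$; your low-frequency bound via $\|\widehat{F}\|_{L^\infty}\le\|F\|_{L^1}$ is more direct than the paper's, and you actually supply the monotonicity argument (rightly noting it needs $\mathfrak{s}\ge 0$ for $r>1$), which the paper's proof omits. One small fix: for $0<s<2$ the Fourier transform of $\lvert x\rvert^{-s}e^{-\epsilon\lvert x\rvert^{2}}$ is not pointwise increasing as $\epsilon\downarrow 0$, but it equals $c_{d,s}\lvert\cdot\rvert^{s-d}$ convolved with an $L^1$-normalized Gaussian, so scaling gives the uniform bound $\lesssim\lvert\xi\rvert^{s-d}$, which is all your dominated-convergence step needs.
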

\begin{proof}
 Let $g_{n}(x) = \phi_n\ast\chi_s(x)$, where $(\phi_n)$ is an approximate identity and $\chi_s(x) = \frac{1}{\abs{x}^s}$. Consider first 
\[ \iint_{A} \abs{\widehat{\chi_s}(\xi)\widehat{F}\paren{\xi}}\,d\xi.\]
For $A=B(0,1)\subset \R^{2d}$, this is finite since \[\abs{\widehat{\chi_s}(\xi)\widehat{F}(\xi)}= \abs{\widehat{\chi_s\ast F}(\xi)} \leq \widehat{\chi_s\ast F}(0) = \int \chi_s(u)F(u)\,du \ra \I{s} < \infty, \] while for $A=B(0,1)^{c}$, by Holder's inequality with $\frac{1}{p}+\frac{1}{q} = 1$,
\[ \int_{A} \abs{\widehat{\chi_s}(\xi)\widehat{F}{\xi}}\,d\xi\ \lesssim \norm{\widehat{\chi_s}}_{L^p(B(0,1)^c)}\norm{\widehat{F}}_{L^q};\]
taking $p > \frac{d}{d-s}$ requires $q  = \frac{p}{p-1} < \frac{d}{s}$, which by our hypotheses on $s$, $q$, and $\widehat{F}$ guarantees finiteness.

We have since 
$\widehat{F}\in L^1$ and $s>0$ that
\eqn{\I{s} = \int \s(r)r^{s-1}\,dr 
=\\& \int r^{s-1}\s(r)\,dr 
\\=& \int r^{s-1 - (d-1)}\int_{\R^d} \int_{\mathbb{S}^{d-1}} \widehat{F}\paren{r\theta}\,d\sigma(\theta) r^{d-1}\,dr
\\=&
\int |\xi|^{-(d-s)}\widehat{F}\paren{\xi}\,d\xi
\\=&
 \lim_{n\ra\infty} \int \widehat{g_n}(\xi)\widehat{F}\paren{\xi}\,d\xi
\\=&
\lim_{n\ra\infty}\int g_n(u)F(u)\,du
\\=&
\int\abs{u}^{-s}F(u)\,du.
}
\end{proof}

\begin{lemma}\label{thm:mattilaconclusions}
 Suppose that $F\in L^1$ is compactly supported, with $\widehat{F}\in L^1$, and that
 \begin{equation}
 \int_0^{\infty} \int_{\R^d} r^{s-1}\s(r)\, dr<\infty.\label{positiveenergyagain}
 \end{equation}
Then
 \[\int_0^{\infty} L(r)^2\,dr \lesssim \I{s}^2.\]
\end{lemma}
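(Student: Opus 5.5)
We aim to bound $\int_0^\infty L(r)^2\,dr$ by feeding the kernel estimate \ref{thm:mattilamanipulations}.\eqref{itm:mattilal24} into \ref{thm:mattilamanipulations}.\eqref{itm:mattilal23}, splitting the $s$-integral at $s=1/r$, and treating the resulting pieces as Hardy-type operators on the half-line with power weights. This is the same mechanism behind the pointwise bound \ref{thm:mattilamanipulations}.\eqref{itm:mattilal25}, but now done in $L^2(dr)$ rather than in $L^\infty$. Since $\s\geq 0$ and $\S(s)=s^{(d-1)/2}\s(s)$, the estimate $|K(t)|\lesssim \min(t^{-1/2},t^{-3/2})$ gives
\[
|L(r)|\lesssim \int_0^{1/r} s^{(d-1)/2}\s(s)\,ds + r^{-1}\int_{1/r}^{\infty} s^{(d-3)/2}\s(s)\,ds =: L_1(r)+L_2(r),
\]
exactly as in the proof of Lemma~\ref{thm:mattilamanipulations}. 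It then suffices to bound $\norm{L_1}_{L^2(dr)}$ and $\norm{L_2}_{L^2(dr)}$ separately by $\I{s}$ (times constants). Here $\I{s}$ is finite by hypothesis \eqref{positiveenergyagain}, and nonnegative because $\s\geq0$; its positivity is Lemma~\ref{thm:positivegeneralizedenergy}.

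For the pointwise step we use the weights already appearing in \ref{thm:mattilamanipulations}.\eqref{itm:mattilal25}. Write $s^{(d-1)/2}=s^{s_0-1}\cdot s^{(d+1)/2-s_0}$, where $s_0$ denotes the energy exponent of the statement, renamed to avoid a clash with the integration variable. On $\{s<1/r\}$ the second factor is at most $r^{-((d+1)/2-s_0)}$ whenever $s_0\le (d+1)/2$. Similarly, on $\{s>1/r\}$ we have $r^{-1}s^{(d-3)/2}\le r^{-a}s^{s_0-1}$ with $a=(d+1)/2-s_0$, provided $s_0\geq (d-1)/2$. This yields $|L(r)|\lesssim r^{-a}\I{s_0}$.

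A single power $r^{-a}$ is not square integrable on $(0,\infty)$, so we will apply the pointwise bound with two different exponents. We take $s_0$ near $(d+1)/2$ (so $a$ is small) for $r\le 1$, and $s_0$ near $(d-1)/2$ (so $a$ is near $1$, and $r^{-2a}$ is integrable at infinity) for $r\geq1$. Concretely, we choose $\alpha_1<(d+1)/2-\tfrac12<\alpha_2$ with $\alpha_1\ge (d-1)/2$, and aim for
\[
\int_0^\infty L(r)^2\,dr\lesssim \I{\alpha_2}^2\int_0^1 r^{-2((d+1)/2-\alpha_2)}\,dr+\I{\alpha_1}^2\int_1^\infty r^{-2((d+1)/2-\alpha_1)}\,dr\lesssim \I{\alpha_1}^2+\I{\alpha_2}^2 .
\]
The remaining step is to compare $\I{\alpha_1}$ and $\I{\alpha_2}$ with the single energy $\I{s}$ of the statement. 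For this we use the monotonicity clause of Lemma~\ref{thm:positivegeneralizedenergy}, namely that the high-frequency part of $\I{\cdot}$ is increasing in the exponent. The low-frequency part $\int_0^1\s(r)r^{t-1}\,dr$ is controlled by $\norm{\s}_{L^\infty}\lesssim\norm{\widehat F}_{L^1}$, since $t>0$.

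I expect this last comparison to be the main obstacle. The statement has only one exponent $s$, while square integrability on all of $(0,\infty)$ seems to require exponents on both sides of $d/2$. If a single exponent is forced, the fallback is a Schur-test argument. We view $L_1$ and $L_2$ as integral operators with kernels $\mathbf 1_{rs<1}$ and $r^{-1}s^{-1}\mathbf 1_{rs>1}$ acting on $\S(s)\,ds$. Both kernels are homogeneous of degree $-1$ in $(r,s)$, so by Hardy's inequality they are bounded on $L^2$ with the appropriate weights, which gives $\norm{L}_{L^2}\lesssim \norm{\S}_{L^2}$-type control. We would then interpolate $\norm{\S}_{L^2}^2=\int s^{d-1}\s(s)^2\,ds\le \norm{\s}_{L^\infty}\int s^{d-1}\s(s)\,ds$ and bound the right-hand side in terms of $\I{s}$ with $s=d/2$ via Lemma~\ref{thm:positivegeneralizedenergy}. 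The constants there depend on $\widehat F$, which is acceptable within the $\lesssim$ of the statement.
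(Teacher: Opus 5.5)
Your main line is the paper's proof. The paper also inserts the pointwise bound $|L(r)|\lesssim r^{\alpha-(d+1)/2}\mathcal{I}_{\alpha}$ with two exponents: $\alpha=(d-1)/2$ on $r>1$, and $\alpha=t\in\bigl(d/2,\min(s,(d+1)/2)\bigr)$ on $r<1$. It then dominates both energies by $\mathcal{I}_{s}$ through monotonicity.

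The ``single exponent'' obstacle you anticipate is settled by the hypothesis $s>d/2$. The statement omits it, but the paper's proof uses it tacitly, since it needs $t\le s$. It also holds wherever the lemma is used: the $L^2$ proposition for $\mathfrak{D}$ assumes $\mathcal{I}_{s}<\infty$ with $s>d/2$. So take $\alpha_2\in\bigl(d/2,\min(s,(d+1)/2)\bigr]$. Then $\alpha_1<\alpha_2\le s$, and the $r>1$ parts of $\mathcal{I}_{\alpha_1}$ and $\mathcal{I}_{\alpha_2}$ are bounded by that of $\mathcal{I}_{s}$. Within this method the condition is unavoidable, because square integrability of $r^{2\alpha_2-(d+1)}$ near $0$ forces $\alpha_2>d/2$.

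Your separate treatment of $\int_0^1\mathfrak{s}(r)r^{\alpha-1}\,dr$ is more careful than the paper. The paper asserts that $\alpha\mapsto\mathcal{I}_{\alpha}$ is non-decreasing, even though $r^{\alpha-1}$ decreases in $\alpha$ on $(0,1)$. The extra additive term this produces is harmless. Under the standing assumptions $F\ge 0$ and $\mathfrak{s}\ge 0$ it can even be absorbed into $\mathcal{I}_{s}$: $\|\mathfrak{s}\|_{L^\infty}=\mathfrak{s}(0)$, and $\mathfrak{s}\ge\mathfrak{s}(0)/2$ on an interval $[0,c]$ with $c$ depending only on the support of $F$.

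You should drop the Schur-test fallback, however, since it does not give the stated bound. Its first step is sound once you substitute $s\mapsto 1/s$: the two pieces become the Hardy operator and its adjoint, which is where the homogeneity of degree $-1$ actually lives. That gives $\|L\|_{L^2}\lesssim\|\mathfrak{S}\|_{L^2}$. The second step fails, though. $\int_0^\infty s^{d-1}\mathfrak{s}(s)\,ds$ is $\mathcal{I}_{d}$, not $\mathcal{I}_{d/2}$, and monotonicity only bounds (the high-frequency part of) $\mathcal{I}_{d/2}$ by $\mathcal{I}_{d}$, not the reverse.

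Controlling $\|\mathfrak{S}\|_{L^2}^2=\int r^{d-1}\mathfrak{s}(r)^2\,dr$ by an energy $\mathcal{I}_{s}$ with $s<d$ is exactly what requires the extra decay hypothesis $\mathfrak{s}(r)\lesssim r^{-t}$, $s+t>d$, in the $L^2$ proposition. The point of this lemma is that the $L$-part needs no decay at all.
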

\begin{proof}
 The proof is adapted from the proof of Proposition 15.2 in \cite{MattilaBook2}.  Let $t\in\paren{d/2, \min\paren{s,(d+1)/2}}$. Since $\I{\cdot}$ is non-decreasing by Lemma \ref{thm:positivegeneralizedenergy}, $\I{t}\leq\I{s}<\infty$.

Since $d/2 < t < \frac{d+1}{2}$ so $2t-(d-1) > -1$,  we can apply Lemma \ref{thm:mattilamanipulations}.\eqref{itm:mattilal25} to bound 
\[\int_0^1 \paren{L(r)}^2\,dr \lesssim \int_0^1 \paren{r^{t-(d-1)/2}\I{t}}^2\,dr \lesssim \paren{\int_0^1 r^{2t-(d-1)}\,dr} \I{t}^2\lesssim  \I{t}^2,\]
where the last inequality follows from the fact that $2t-(d-1)>-1$. We similarly have that
\[\int_1^{\infty} \paren{L(r)}^2\,dr \lesssim \int_1^{\infty} \paren{r^{\frac{d-1}{2}-(d-1)/2}\I{\frac{d-1}{2}}}^2\,dr\lesssim  \I{\frac{d-1}{2}}^2.\]
This gives 
\[ \int_0^{\infty} \paren{L(r)}^2\,dr \lesssim_t \I{\frac{d-1}{2}}^2+  \I{t}^2\lesssim \I{t}^2\]
for any $t\in \paren{d/2,\min\{(d+1)/2,s\}}$ by the last inequality and \eqref{positiveenergyagain}. 
\end{proof}

\begin{prop}\label{thm:mattilaL2main}

Suppose that $F$ is compactly supported, lies in $L^1$, and that $\widehat{F}\in L^1$.

 If $\I{s}<\infty$ for $s>d/2$ and $\s(r)\lesssim r^{-t}$ where $s+t>d$ then
 \[\D\in L^2\]
with norm 
\[\norm{\D}_{L^2}\lesssim \I{s} + \sqrt{\I{s}}.\]
\end{prop}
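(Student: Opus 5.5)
The plan is to follow Mattila's proof of Proposition 15.2 in \cite{MattilaBook2}, using the decomposition $\D = S + L$ from Lemma \ref{thm:mattilamanipulations}. Since $\norm{\D}_{L^2}\leq \norm{S}_{L^2}+\norm{L}_{L^2}$, it suffices to bound the two pieces separately.

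For the $L$ piece, the hypothesis $\I{s}<\infty$ with $s>d/2$ is exactly what Lemma \ref{thm:mattilaconclusions} requires, and that lemma gives $\norm{L}_{L^2}\lesssim \I{s}$ directly. In passing I would check the monotonicity of $\I{\cdot}$ from Lemma \ref{thm:positivegeneralizedenergy}, which is used inside that lemma. The low-frequency contribution ($r<1$) to $\I{\cdot}$ can fail to be monotone, so I would handle it separately: there $\s$ is bounded by $\s(r)\lesssim \norm{\widehat F}_{L^\infty}\lesssim \norm{F}_{L^1}$, and the contribution is finite since $s>d/2>0$.

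For the $S$ piece, Lemma \ref{thm:mattilamanipulations}.(b) gives $\norm{S}_{L^2}\approx\norm{\S}_{L^2}$, where $\S(r)=r^{(d-1)/2}\s(r)$. So it remains to show $\norm{\S}_{L^2}^2=\int_0^\infty r^{d-1}\s(r)^2\,dr\lesssim \I{s}$. This is the step where the decay hypothesis enters, and I expect it to be the main obstacle. I would split $\s^2=\s\cdot\s$, bound one factor by the decay $\s(r)\lesssim r^{-t}$, and keep the other:
\[\int_1^\infty r^{d-1}\s(r)^2\,dr\lesssim \int_1^\infty r^{d-1-t}\abs{\s(r)}\,dr\leq \int_1^\infty r^{s-1}\abs{\s(r)}\,dr,\]
where the last inequality uses $d-t<s$, i.e.\ $s+t>d$, together with $r\geq 1$. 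This gives a bound by $\I{s}$, up to the issue of $\abs{\s}$ versus $\s$. On $r<1$ I would use boundedness of $\s$ and $d-1>-1$ to get an $O(1)$ contribution. Since $\widehat F$ is real only when $F$ is even, the last integral really controls $\int r^{s-1}\abs{\s}$. I would either assume $\s\geq 0$ as in Subsubsection \ref{s:ToolsFromMattila}, or replace $\I{s}$ by its absolute-value version, which the proof of Lemma \ref{thm:positivegeneralizedenergy} also bounds.

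Combining the two pieces gives $\norm{\S}_{L^2}\lesssim\sqrt{\I{s}}$, and hence
\[\norm{\D}_{L^2}\lesssim \sqrt{\I{s}}+\I{s}.\]
The extra $O(1)$ from low frequencies can be absorbed by normalization or by the positivity of $\I{s}$ established in Lemma \ref{thm:positivegeneralizedenergy}.
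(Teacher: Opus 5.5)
Your proposal is correct and follows the paper's proof essentially step for step: you split $\mathfrak{D}=S+L$, control $\|S\|_{L^2}\approx\|\mathfrak{S}\|_{L^2}$ by spending the decay hypothesis on one factor of $\mathfrak{s}^2$ together with $r^{d-1-t}\le r^{s-1}$ for $r\ge 1$, and quote the preceding lemma for $\|L\|_{L^2}\lesssim\mathcal{I}_s$. Your extra bookkeeping is sound; this covers the signs of $\mathfrak{s}$, the non-monotone low-frequency part of the energy, and the $O(1)$ term that the paper just calls ``bounded'' and then drops. One refinement: absorbing that $O(1)$ term uniformly should use the quantitative bound $\mathcal{I}_s=c_{d,s}\int|u|^{-s}F(u)\,du\ge c_{d,s}R^{-s}\|F\|_{L^1}$ (for $F\ge 0$ supported in $B(0,R)$ and $s<d$), rather than mere positivity of $\mathcal{I}_s$.
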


\begin{proof}
 The proof here is the same as in Mattila's $L^2$ Falconer result \cite{MattilaFalconer}:
 
 One has $\D = S + L$ and that $\norm{S}_{L^2} \approx \norm{\S}_{L^2}$ by Lemma \ref{thm:mattilamanipulations}. Then using the hypothesis on $\s$
\[\norm{S}_{L^2}^2 \approx \norm{\S}_{L^2}^2 = \int r^{d-1}\s(r)^2\,dr \lesssim \norm{\widehat{F}}_{L^1}^2 + \int_{r \ge 1} r^{d-1}\s(r) r^{-t}\,dr.\]
The first term is bounded. For the second,
\[ \int_{r \ge 1} r^{d-1}\s(r) r^{-t}\,dr = \int_{r \ge 1} r^{d-t-1}\s(r)\,dr \le \int_{r \ge 1} r^{s-1}\s(r)\,dr \le \I{s} < \infty\]
since $d-t<s$ and the integrand is monotonic for $r \ge 1$. By Lemma \ref{thm:mattilaconclusions} and a similar argument for the tail, we have
 \[\norm{L}_{L^2}^2 \lesssim \I{{s}}^2.\]
 
Thus $\D = S + L$ is bounded in $L^2$, which completes the proof.

\end{proof}

\begin{definition}[Fixed linear maps]\label{def:Mn}
Throughout the remainder of this section we fix two \emph{distinct} integer matrices  
\[
    \m,\;\n\in\GL_{d}(\Z),
\]
and impose the following non–degeneracy hypotheses:
\begin{enumerate}[(i)]
    \item \textbf{Invertibility.}  Both $\m$ and $\n$ are invertible over $\R$ (hence over $\Q$), i.e.\ $\det\m\neq0$ and $\det\n\neq0$.
    
    \item \textbf{Linear independence.}  Their difference is also invertible:
    \[
        \n-\m\in\GL_{d}(\Z)\qquad\bigl(\text{equivalently } \det(\n-\m)\neq0\bigr).
    \]
    This guarantees that the three linear forms 
    \(
        u\;\mapsto\;u,\;
        u\;\mapsto\;\m u,\;
        u\;\mapsto\;\n u
    \)
    are mutually transverse, which is the non-degeneracy needed for the Brascamp–Lieb and change-of-variables steps in Lemma~\ref{thm:tripleFourier}.
\end{enumerate}
For brevity we write $\adet{\m}:=\det\m$ (and similarly for~$\n$), and use the shorthand
\(
    \m^{-{\tran}}:=(\m^{\tran})^{-1},\;
    \n^{-{\tran}}:=(\n^{\tran})^{-1}.
\)
\end{definition}

\begin{remark}
  The reader may wish to set $\m= I_d$ and $\n = 2I_d$, which corresponds to $3$APs—but we write this in this form, here,
  to emphasize how the techniques of this paper may be extended to more general settings than $3$APs (triangles, for instance).
\end{remark}

\begin{lemma}\label{thm:tripleFourier}
 Let $g$ be a Schwartz function and $f_0$, $f_1$, and $f_2$ belong to $\widehat{L}^1$. Then
 \eqn{\notag \iint g(u) f_0(x) f_1(x-\m u) f_2(x - \n u)\,dx\,du 
\\=&\label{tripleFourier}
  \int\baren{ \overline{\widehat{g}(\eta )} }
\baren{\frac{1}{\adet{\m^{\tran}}\adet{\n^{\tran}}}\int\widehat{f_0}\paren{\paren{\m^{-\tran}-\n^{-\tran}}\xi-\n^{-\tran}\eta}\widehat{f_1}(-\m^{-\tran}\xi)\overline{\widehat{f_2}(-\n^{-\tran}(\xi+\eta))}
\,d\xi}\,d\eta.}
\end{lemma}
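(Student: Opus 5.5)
The plan is to expand all three functions by Fourier inversion, perform the $x$- and $u$-integrations exactly, and then make a linear change of frequency variables. Conventions: $\widehat{f}(\zeta)=\int f(x)e^{-2\pi i x\cdot\zeta}\,dx$, so that $f(x)=\int\widehat f(\zeta)e^{2\pi i x\cdot\zeta}\,d\zeta$ pointwise for $f\in\widehat{L}^1$. We also use the real-valuedness of the $f_i$ and $g$ (the case of interest, where they are mollified measures and radial cutoffs). This allows us to rewrite $\widehat{f_2}(-\zeta)=\overline{\widehat{f_2}(\zeta)}$ and $\widehat g(-\eta)=\overline{\widehat g(\eta)}$; these are exactly the conjugations appearing in \eqref{tripleFourier}. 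If one does not want to assume real-valuedness, the same computation gives the identity with those conjugations replaced by reflections.

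\textbf{Step 1 (the $x$-integral).} Fix $u$. By Plancherel/Parseval in $x$, applied to $f_0$ and the product $f_1(\cdot-\m u)f_2(\cdot-\n u)$, we get
\[
\int f_0(x)f_1(x-\m u)f_2(x-\n u)\,dx=\iint \widehat{f_0}(-\zeta_1-\zeta_2)\,\widehat{f_1}(\zeta_1)\,\widehat{f_2}(\zeta_2)\,e^{-2\pi i(\zeta_1\cdot \m u+\zeta_2\cdot \n u)}\,d\zeta_1\,d\zeta_2 .
\]
Here the Fourier transform of the product is the convolution of the transforms, and each translation contributes a modulation. All integrals converge absolutely because $\widehat{f_i}\in L^1\cap L^\infty$ (bounded since $f_i\in\widehat L^1$ forces $f_i$ bounded and, in our setting, $f_i\in L^1$).

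\textbf{Step 2 (the $u$-integral).} Multiply by $g(u)$ and integrate in $u$. The $u$-dependence is the single character $e^{-2\pi i u\cdot(\m^{\tran}\zeta_1+\n^{\tran}\zeta_2)}$, so Fubini is justified: $g\in L^1$ and the $\zeta$-integrand is bounded by $|\widehat{f_0}(-\zeta_1-\zeta_2)||\widehat{f_1}(\zeta_1)||\widehat{f_2}(\zeta_2)|$, which is integrable because $\widehat{f_0}\in L^\infty$ and $\widehat{f_1},\widehat{f_2}\in L^1$. The $u$-integral then produces $\widehat g(\m^{\tran}\zeta_1+\n^{\tran}\zeta_2)$.

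\textbf{Step 3 (change of variables).} Substitute $\xi:=-\m^{\tran}\zeta_1$ and $\eta:=-(\m^{\tran}\zeta_1+\n^{\tran}\zeta_2)$, i.e.
\[
\zeta_1=-\m^{-\tran}\xi,\qquad \zeta_2=\n^{-\tran}(\xi-\eta)\quad\text{(up to the sign convention for $\eta$)}.
\]
This map is invertible by hypothesis (i) of Definition~\ref{def:Mn}, with Jacobian $1/(\adet{\m^{\tran}}\adet{\n^{\tran}})$. Then $-\zeta_1-\zeta_2$ becomes a linear combination of $(\m^{-\tran}-\n^{-\tran})\xi$ and $\n^{-\tran}\eta$. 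After using $\widehat{f_2}(-\zeta)=\overline{\widehat{f_2}(\zeta)}$ and $\widehat g(-\eta)=\overline{\widehat g(\eta)}$, this yields precisely the integrand of \eqref{tripleFourier}. Hypothesis (ii) is not needed for the identity itself; it is recorded only for the later Brascamp--Lieb step.

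\textbf{Main obstacle.} No step is deep; the real risk lies in Step 3. One must choose the sign of $\eta$ consistently with which factors carry conjugates, so that the arguments come out exactly as $(\m^{-\tran}-\n^{-\tran})\xi-\n^{-\tran}\eta$, $-\m^{-\tran}\xi$ and $-\n^{-\tran}(\xi+\eta)$. I would first fix the substitution so that the $\widehat{f_1}$ and $\widehat g$ arguments match, then read off the remaining two arguments and adjust via reflection/conjugation. Finally I would sanity-check the result against the case $\m=I_d$, $\n=2I_d$ with $g\equiv 1$, where it must reduce to $\Lambda_3=\sum\widehat f(\xi)^2\widehat f(-2\xi)$. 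A secondary obstacle is the Fubini justification in Step 2; it is handled by the absolute-integrability bound above.
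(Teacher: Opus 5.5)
\textbf{The gap is in the final matching claim of Step 3.} Your Steps 1 and 2 are correct and give
\[
\iint g(u)f_0(x)f_1(x-\m u)f_2(x-\n u)\,dx\,du=\iint\widehat{f_0}(-\zeta_1-\zeta_2)\,\widehat{f_1}(\zeta_1)\,\widehat{f_2}(\zeta_2)\,\widehat{g}\paren{\m^{\tran}\zeta_1+\n^{\tran}\zeta_2}\,d\zeta_1\,d\zeta_2 .
\]
In Step 3 you assert that the substitution yields precisely the integrand of the statement, and it does not. Your substitution $\xi=-\m^{\tran}\zeta_1$, $\eta=-\paren{\m^{\tran}\zeta_1+\n^{\tran}\zeta_2}$ is forced if you want the factors $\widehat{f_1}(-\m^{-\tran}\xi)$ and $\widehat g(-\eta)=\overline{\widehat g(\eta)}$. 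It produces $\widehat{f_0}\paren{(\m^{-\tran}-\n^{-\tran})\xi+\n^{-\tran}\eta}$ and $\overline{\widehat{f_2}\paren{-\n^{-\tran}(\xi-\eta)}}$. That is the inner bracket of the statement evaluated at $-\eta$, not at $\eta$.

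The deferred ``adjust via reflection/conjugation'' cannot repair this. Replacing $\eta$ by $-\eta$ fixes the $\widehat{f_0}$ and $\widehat{f_2}$ arguments but turns $\overline{\widehat g(\eta)}$ into $\widehat g(\eta)$. Since $\widehat{f_0}$ and $\widehat{f_1}$ occur unconjugated, there is nothing else to trade.

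In fact the displayed identity is false for a general Schwartz $g$. For real data its right-hand side equals $\iint g(-u)f_0(x)f_1(x-\m u)f_2(x-\n u)\,dx\,du$. Here is a counterexample. Take $d=1$, $\m=1$, $\n=2$, and let $f_0,f_1,f_2$ be narrow Gaussians centred at $0,-1,-2$. Then $F(u)=\int f_0(x)f_1(x-u)f_2(x-2u)\,dx$ is concentrated near $u=1$. Taking $g$ a narrow Gaussian centred at $1$ makes $\int g(u)F(u)\,du$ much larger than $\int g(-u)F(u)\,du$. Your sanity check cannot detect this: $g\equiv 1$ is even, and in the symmetric case $f_0=f_1=f_2$ with $\m=I_d$, $\n=2I_d$ the function $F$ is itself even.

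\textbf{How this relates to the paper's proof.} The paper runs the same computation in a different order. It applies Plancherel in $u$ for fixed $x$ to $F_1(u)=f_1(x-\m u)$ and $F_2(u)=f_2(x-\n u)e^{-2\pi i\eta\cdot u}$, then integrates against $f_0$. It contains the same sign slip: $\int F_1\overline{F_2}\,du$ carries $e^{+2\pi i\eta\cdot u}$, not $e^{-2\pi i\eta\cdot u}$. So the bracket it computes is $\widehat F(-\eta)$ rather than $\widehat F(\eta)$, where $F(u)=\int f_0(x)f_1(x-\m u)f_2(x-\n u)\,dx$.

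\textbf{How to fix it.} What your computation actually proves is the corrected identity, with $+\n^{-\tran}\eta$ inside $\widehat{f_0}$ and $\xi-\eta$ inside $\widehat{f_2}$. To obtain the statement as written you must add the hypothesis that $g$ is real and even. This holds for the radial $g(u)=G(\abs{u})$, which is the only case the paper uses. Then $\widehat g$ is real and even, and the change of variables $\eta\mapsto-\eta$ in the outer integral turns your formula into the displayed one. Your real-valuedness assumption on $f_2$ is also genuinely needed for the conjugate on $\widehat{f_2}$, and it too is absent from the statement.
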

\begin{proof}
Let 
\[ F(u) = \int f_0(x) f_1(x-\m u) f_2(x - \n u)\,dx.\]
Let 
\[F_1(u) = f_1(x-\m u), \]
\[F_2(u) = f_2(x - \n u) \,e^{-2\pi i \eta\cdot u}.\]

Then
\begin{equation}\label{F1}
\widehat{F_1}(\xi) \;=\; \frac{1}{\det(\m^{\tran})}\,
      \widehat{f_1}\!\bigl(-\m^{-\tran}\xi\bigr)\,
      e^{-2\pi i (\m^{-\tran}\xi)\cdot x},
\end{equation}
and
\begin{equation}\label{F2}
\widehat{F_2}(\xi) \;=\; \frac{1}{\det(\n^{\tran})}\,
      \widehat{f_2}\!\bigl(-\n^{-\tran}(\xi+\eta)\bigr)\,
      e^{-2\pi i (\n^{-\tran}(\xi+\eta))\cdot x}.
\end{equation}
(Note that $F_2$, and hence $\widehat{F_2}$, depends on the
parameter~$\eta$.)

\medskip
\noindent
\textbf{Plancherel step.}
For each fixed $x$ we treat $u\mapsto f_1(x-\m u)\,
                                f_2(x-\n u)\,
                                e^{-2\pi i\eta\cdot u}$
as an $L^2(\mathbb R^d)$–function of~$u$ and apply the
Plancherel theorem in the $u$–variable:

\[
   \int_{\mathbb R^d}
      f_1(x-\m u)\,
      f_2(x-\n u)\,
      e^{-2\pi i\eta\cdot u}\;du
   \;=\;
   \int_{\mathbb R^d}
      \widehat{F_1}(\xi)\,
      \overline{\widehat{F_2}(\xi)}\;d\xi.
\]

Here Plancherel converts convolution–type products in the $u$–
domain into pointwise products of the corresponding Fourier transforms.
The linear changes of variables $u\mapsto \m u$ and $u\mapsto \n u$
produce the Jacobian factors
$1/\det(\m^{\tran})$ and $1/\det(\n^{\tran})$
displayed in~\eqref{F1}–\eqref{F2}.

\medskip
\noindent
Using this identity we compute
\begin{align*}
\widehat{F}(\eta)
  &= \iint_{\mathbb R^{2d}}
       f_0(x)\,
       f_1(x-\m u)\,
       f_2(x-\n u)\,
       e^{-2\pi i\eta\cdot u}\;dx\,du \\[4pt]
  &= \int_{\mathbb R^{d}}
       f_0(x)\!
       \left(
         \int_{\mathbb R^{d}}
           \widehat{F_1}(\xi)\,
           \overline{\widehat{F_2}(\xi)}\;d\xi
       \right)dx,
\end{align*}
which is the form needed for Lemma~\ref{thm:tripleFourier}.
(One may think of this as “Plancherel in $u$ first, then Fubini in $x$”.)

Plugging in \eqref{F1} and \eqref{F2} gives 
\begin{align*} &
\widehat{F}(\eta) = \int f_0(x)\frac{1}{\adet{\m^{\tran}}}\widehat{f_1}(-\m^{-\tran}\xi)e^{-2\pi i \paren{\m^{-\tran}\xi}\cdot x}\overline{\frac{1}{\adet{\n^{\tran}}}\widehat{f_2}(-\n^{-\tran}(\xi+\eta))e^{-2\pi i \paren{\n^{-\tran}(\xi+\eta)}\cdot x}}\,dx
 \\=&
\paren{\int f_0(x) e^{-2\pi i \paren{\m^{-\tran}\xi}\cdot x}e^{2\pi i \paren{\n^{-\tran}(\xi+\eta)}\cdot x}dx}\frac{1}{\adet{\m^{\tran}}}\widehat{f_1}(-\m^{-\tran}\xi)\overline{\frac{1}{\adet{\n^{\tran}}}\widehat{f_2}(-\n^{-\tran}(\xi+\eta))}
\\=&
\frac{1}{\adet{\m^{\tran}}\adet{\n^{\tran}}}\widehat{f_0}\paren{\paren{\m^{-\tran}-\n^{-\tran}}\xi-\n^{-\tran}\eta}\widehat{f_1}(-\m^{-\tran}\xi)\overline{\widehat{f_2}(-\n^{-\tran}(\xi+\eta))}.
\end{align*}
\end{proof}

\begin{corr}\label{thm:3APinL2}
  Suppose that $\mu$ is a probability measure on $[0,1]^d$ and that $\widehat{\mu}\in L^q$ for $q\in(2,3)$. Then
  $\D\paren{\mu}\in L^2$.
\end{corr}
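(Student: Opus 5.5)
The plan is to apply Proposition \ref{thm:mattilaL2main} to the mollified measures $f_n=\phi_n\ast\mu$, with uniform bounds in $n$, and then pass to the limit. Here $\s$ is the 3AP analogue $\si{f_n}$ (equivalently $\Si{f_n}$ for the absolute bounds). Since $\mu$ lives on $[0,1]^d$, each $f_n$ is compactly supported up to Schwartz tails, lies in $L^1$, and has $\widehat{f_n}\in L^1$. Moreover $\norm{\widehat{f_n}}_{L^q}\lesssim\norm{\widehat{\mu}}_{L^q}$ uniformly in $n$. Lemma \ref{thm:fourpolarrep} identifies $\di{f_n}$ with the function $\d$ built from $\si{f_n}$ as in \eqref{mattilal21}, so $\D(f_n)=r^{-(d-1)/2}\di{f_n}$. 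Hence it suffices to verify the two hypotheses of Proposition \ref{thm:mattilaL2main}, namely $\I{s}<\infty$ for some $s>d/2$ and $\s(r)\lesssim r^{-t}$ with $s+t>d$, with constants depending only on $\norm{\widehat{\mu}}_{L^q}$.

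\emph{Pointwise decay.} Lemma \ref{thm:PointwiseDecay} gives $\Si{f_n}(r)\lesssim\min\paren{1,r^{-t}}\norm{\widehat{\mu}}_{L^q}^3$ with $t=3d\frac{q-1}{q}-1$. Since $|\si{f_n}|\le\Si{f_n}$, the decay hypothesis holds with this $t$.

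\emph{Energy.} I would bound $\I{s}=\int_0^\infty\Si{f_n}(r)r^{s-1}\,dr$ directly rather than through Lemma \ref{thm:positivegeneralizedenergy}. Near $0$ the integral converges because $\Si{f_n}$ is bounded and $s>0$. For the tail, I would use Lemma \ref{thm:mattilaLInfintymain} with $\rho=d-s$: it bounds $\int r^{d-1-\rho}\Si{f_n}(r)\,dr$ by $\norm{\widehat{\mu}}_{L^q}^3$ as long as $\rho>d(1-1/q)$, that is, $s<d/q$. This is the first problem, because $d/q<d/2$ when $q>2$, so this lemma alone never reaches $s>d/2$. Instead I would interpolate: combine the decay bound $r^{-t}$ with the uniform bound. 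Then $\int_1^\infty r^{s-1-t}\,dr<\infty$ requires only $s<t$, and $t=3d(1-1/q)-1>d/2$ for $q>2$ and $d\ge 2$ (at $q=2$ it already equals $3d/2-1\ge d/2$). Choosing $s\in(d/2,t)$ also gives $s+t>d$, because $2t>d$.

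Once both hypotheses hold, Proposition \ref{thm:mattilaL2main} yields $\norm{\D(f_n)}_{L^2}\lesssim\I{s}+\sqrt{\I{s}}\lesssim_{\norm{\widehat{\mu}}_{L^q}}1$, uniformly in $n$. Weak-$L^2$ compactness then gives a subsequence with $\D(f_n)\rightharpoonup D$ in $L^2$. Testing against $g\in C_c(0,\infty)$ and using the definition of $\di{\mu}$ as the limit of $\di{f_n}$ (the limit exists by Lemma \ref{thm:existence}, since $q\le3$) identifies $r^{(d-1)/2}D$ with $\di{\mu}$. Hence $\D(\mu)=D\in L^2$.

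The main obstacle is the energy step. Proposition \ref{thm:mattilaL2main} and Lemma \ref{thm:mattilaconclusions} are stated with $\s$ a signed quantity, while the bounds above are for $\Si{\cdot}$. The monotonicity in $s$ claimed by Lemma \ref{thm:positivegeneralizedenergy} also needs care for the low-frequency part. I would handle this by running the whole argument with $\Si{f_n}$ in place of $\s$ in the bound of Lemma \ref{thm:mattilamanipulations}(e), which only uses $|\s|$. The other delicate point is the edge case $d=1$, where $t=3(1-1/q)-1$ may fall below $1/2$. There I expect the argument to need $q$ close to $3$, or to fall back on Lemma \ref{thm:onedimensionalnonsingularity}.
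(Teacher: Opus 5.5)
Relative to the lemmas as stated in the paper, your argument is correct. It departs from the paper's proof at the energy step.

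\textbf{The energy step.} The paper takes $\rho\in(d(1-1/q),d)$, bounds the energy by Lemma \ref{thm:mattilaLInfintymain} with $s=d-\rho$, and then invokes Proposition \ref{thm:mattilaL2main}. As you noticed, this $s$ lies in $(0,d/q)\subset(0,d/2)$. So the hypothesis $s>d/2$ is not met: the range $(d/2,\min(s,(d+1)/2))$ used in Lemma \ref{thm:mattilaconclusions} is empty. The paper's proof therefore does not actually verify the proposition's hypotheses.

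You avoid Lemma \ref{thm:mattilaLInfintymain} altogether. Inserting $\varsigma(f_n)(r)\lesssim\min(1,r^{-t})\|\widehat{\mu}\|_{L^q}^3$ into the energy integral makes $\int_0^\infty\varsigma(f_n)(r)r^{a-1}\,dr$ finite for every $a\in(0,t)$ at once, uniformly in $n$. This buys three things:
\begin{enumerate}
\item you can take $s\in(d/2,t)$ with $s+t>d$;
\item the monotonicity claim of Lemma \ref{thm:positivegeneralizedenergy} becomes unnecessary;
\item since $2t>d$, the same bound controls $\|\mathfrak{S}\|_{L^2}$ uniformly in $n$. The written proof of Proposition \ref{thm:mattilaL2main} instead goes through $\|\widehat{F}\|_{L^1}$, which for singular $\mu$ is not uniform in $n$.
\end{enumerate}
Your weak-compactness passage to the limit is also more accurate than the paper's claim that a bounded sequence converges in $L^2$.

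\textbf{The case $d=1$.} Your worry here is unfounded: $t=2-3/q>1/2$ for every $q>2$, so the same argument works. The only adjustment is in the large-$r$ part of Lemma \ref{thm:mattilaconclusions}: use $\mathcal{I}_a$ with $a\in(0,1/2)$ instead of $\mathcal{I}_{(d-1)/2}=\mathcal{I}_0$, which diverges at $r=0$. Lemma \ref{thm:onedimensionalnonsingularity} gives no $L^2$ information, so it could not have served as a fallback anyway.

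\textbf{A caution affecting both proofs.} Everything rests on the exponent $t=3d(1-1/q)-1$ of Lemma \ref{thm:PointwiseDecay}. You need $t>d/2$; the paper needs the stronger $t>\rho>d(1-1/q)$.
\begin{itemize}
\item That exponent is not what the lemma's Brascamp--Lieb argument delivers. The annulus $\{\xi: |\,|\xi|-r\,|<O(1)\}$ has measure $\approx r^{d-1}$, which yields only $r^{-(d-1)(3/q-1)}$.
\item The stated exponent cannot hold at all. For $\lambda\ge1$, $\mu_\lambda(A)=\mu(\lambda A)$ is still a probability measure on $[0,1]^d$. One has $\varsigma(\mu_\lambda)(r)=\lambda^{d}\varsigma(\mu)(r/\lambda)$ and $\|\widehat{\mu_\lambda}\|_{L^q}=\lambda^{d/q}\|\widehat{\mu}\|_{L^q}$. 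Any bound $\varsigma(\mu)(r)\lesssim r^{-t}\|\widehat{\mu}\|_{L^q}^3$ therefore forces $t\le d(3/q-1)$, which is below $d/2$ when $q>2$.
\end{itemize}
So your proof is a valid derivation from the paper's stated lemmas, and it fixes the paper's misapplication of Proposition \ref{thm:mattilaL2main}. However, the pointwise-decay input it relies on would itself need repair, for example through extra hypotheses on $\mu$, before either argument is complete.
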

\begin{proof}

 Let $\mu_n = \phi_n\ast\mu$ for $(\phi_n)_{n\in\N}$ an approximate identity. 

Let 
 \[ F(u) = F_n(u) = \int \mu_n(x)\mu_n(x-\m u)\mu_n(x-\n u)\,dx.\]
  
  Let $\rho\in\paren{d(1-1/q),d}\subset(d/2,d)$. By Lemma \ref{thm:mattilaLInfintymain}, 
  \[\I{s}\paren{\mu_n}=\int r^{d-1-\rho}\Si{\mu_n}(r)\,dr\lesssim \norm{\widehat{\mu_n}}_{L^q}^3\leq\norm{\widehat{\mu}}_{L^q}^3\]
  for $s=d-\rho$.
  
  By Lemma \ref{thm:PointwiseDecay} 
  \[\s\paren{\mu_n}(r)\lesssim r^{t}\]
  for $t = 3d\frac{q-1}{q}-1\geq 3d\frac{2-1}{2}-1> d-s$
  whenever $q\geq 2$ since $\frac{q-1}{q}$ is non-decreasing in $q$.
  
  Thus
  \[s+t>d,\]
  so by Proposition \ref{thm:mattilaL2main}, $\D\paren{\mu_n}\in L^2$ with a bound uniform in $n$. Thus the sequence $\D\paren{\mu_n}$ converges in $L^2$, necessarily to $\D\paren{\mu}$.
\end{proof}
 
    
 \begin{proof}[Proof of Theorem \ref{thm:L2FalconerResult}]

 By Proposition \ref{thm:3APinL2}  $\D\paren{\mu}(r)=r^{-(d-1)/2}\di{\mu}(r)\in L^2$ which shows that $\di{\mu}$ has no point masses. Informally this means $\di{\mu}$ concentrates no mass on the diagonal $(x,x,x)\in\supp(\mu)^3$. 
  Similarly Lemma \ref{thm:distancemeasuresupport} states that $\di{\mu}$ is concentrated on those $r$ for which there exists some $x,u$, $|u|=r$ for which $(x,x+u,x+2u)\in\supp(\mu)^3$. Together, this shows that $\di{\mu}$ is concentrated on the lengths of step-sizes of the non-trivial $3$APs contained in the support of $\mu$.

  By Lemma \ref{thm:MassLemmaWithoutFourierDecay}, we have $\Lambda_3(\mu)\geq c^{(q)}(M,\delta)>0$, where we may take
  \[q\leq q_0 = 2 + \min_{T>1}\paren{3\paren{1-\frac{1}{T}},\frac{C_1}{\paren{C_2c^{(2)}\paren{\frac{\delta}{M}}}^{T}\ln\paren{C_2c^{(2)}\paren{\frac{\delta}{M}}}^{T}}}.\]
  Since $\norm{\di{\mu}}=\Lambda_3(\mu)$, this gives that the mass of $\di{\mu}$ is not zero, which together with the statement about the set on which $\di{\mu}$ is concentrated give the first part of the theorem.
  
  Finally, the statement about the $s$-dimensional Hausdorff measure of $\supp(\di{\mu})$ is Lemma \ref{thm:onedimensionalnonsingularity}.
  Together, these statements complete the proof of Theorem \ref{thm:L2FalconerResult}.
 \end{proof}



\bibliography{biblio}{}
\bibliographystyle{plain}

\end{document}